\renewcommand{\mathbb}{\mathbbm}                     
\renewcommand{\epsilon}{\varepsilon}                 
\renewcommand{\phi}{\varphi}
\renewcommand{\le}{\leqslant}
\renewcommand{\ge}{\geqslant}
\renewcommand{\leq}{\le}
\renewcommand{\geq}{\ge}
\newcommand{\abs}[1]{\left\lvert #1 \right\rvert}    
\DeclareMathOperator{\C}{{\mathbb  C}}                
\DeclareMathOperator{\R}{{\mathbb R}}                
\DeclareMathOperator{\N}{{\mathbb N}}                
\DeclareMathOperator{\Borel}{{\mathfrak B}}
\DeclareMathOperator{\I}{Id}
\newcommand{\scapro}[2]{\langle #1,#2\rangle}       
\numberwithin{equation}{section}
\theoremstyle{plain}
\newtheorem{thm}{\protect\theoremname}[section]		
\theoremstyle{definition}
\newtheorem{defn}[thm]{\protect\definitionname}
\theoremstyle{plain}
\theoremstyle{definition}
\newtheorem{example}[thm]{Example}
\newtheorem{lemma}[thm]{Lemma}
\newtheorem{remark}[thm]{Remark}
 \providecommand{\definitionname}{Definition}
\providecommand{\theoremname}{Theorem}	
\newcommand{\ud}{\,\mathrm{d}}
\newcommand{\udd}{\mathrm{d}}
\newcommand{\1}{\mathbbm{1}}
\newcommand\numberthis{\addtocounter{equation}{1}\tag{\theequation}}
\newcommand{\itemEq}[1]{%
	\begingroup%
   	\setlength{\abovedisplayskip}{0pt}%
	\setlength{\belowdisplayskip}{0pt}%
	\parbox[c]{\linewidth}{\begin{flalign}#1&&\end{flalign}}%
	\endgroup}
	\newcommand{\beq}{\begin{equation}}
	\newcommand{\beql}[1]{\begin{equation}\label{#1}}
	\newcommand{\eeq}{\end{equation}}
\title{The stochastic Cauchy problem driven by a cylindrical L\'evy process}
\author{Umesh Kumar \\Department of Mathematics \\ King's College\\ London WC2R 2LS\\ United Kingdom\\ \\
nouf.umesh@kcl.ac.uk \and Markus Riedle \\Department of Mathematics \\ King's College  \\ London WC2R 2LS\\ United Kingdom\\ \\ markus.riedle@kcl.ac.uk}
\begin{document}

	\maketitle
	
	\begin{abstract}
In this work, we derive sufficient and necessary conditions for the existence of a weak and mild solution of an abstract stochastic Cauchy problem driven by an arbitrary cylindrical L\'evy process. Our approach requires to establish a stochastic Fubini result for stochastic integrals with respect to cylindrical L\'evy processes. This approach enables us to conclude that the solution process has almost surely scalarly square integrable paths. Further properties of the solution such as the Markov property and stochastic continuity are derived.
	\end{abstract}
{\bf AMS 2010 Subject Classification:} 60H15, 60G51, 60G20, 60H05. \\
{\bf Keywords and Phrases:} cylindrical L\'evy process, Cauchy problem, stochastic Fubini theorem, cylindrical infinitely divisible.
\section{Introduction}
Cylindrical L\'evy processes naturally extend the class of cylindrical Brownian motions, which have been the standard model for random perturbations of partial differential equations for the last 50 years. The general concept of cylindrical L\'evy processes in Banach spaces has been recently introduced by Applebaum and Riedle in \cite{app}. However, so far only specific examples of 
cylindrical L\'evy processes have been applied for modelling the driving noise of stochastic partial differential equations.

In this work we consider a linear evolution equation driven by an additive noise, 
or equivalently a stochastic Cauchy problem, of the form:
\begin{equation}\label{SCP10}
\ud Y(t)= AY(t)\ud t+B\ud L(t)  \qquad  \text{for all $t \in [0,T]$.} 
\end{equation}
Here, $L$ is a cylindrical L\'evy process on a separable Hilbert space $U$,
the coefficient $A$ is the generator of a strongly continuous semigroup on a separable Hilbert space $V$ and $B$  is a linear, bounded operator from $U$ to $V$. In this general setting, we 
present a complete theory for the existence of a mild and weak solution of \eqref{SCP10} and derive some fundamental properties of the solution and its trajectories. 

Only for specific examples of cylindrical L\'evy processes $L$ and sometimes under further restrictive assumptions on the generator $A$, the stochastic Cauchy problem \eqref{SCP10} has been considered in most of the literature. There, typically  one of the following two approaches are exploited: either the considered cylindrical L\'evy process $L$ is of such a form that the question of existence of a weak solution reduces to the study of a sequence of infinitely many one-dimensional 
processes or the underlying Hilbert space $V$ is embedded in a larger space. 
The first approach is applied for example in the works Brze\'zniak et al \cite{Brzetal}, Liu and Zhai \cite{LiuZhai},  and  Priola and Zabczyk \cite{priola_zabczyk}. In these publications, the considered examples of a cylindrical L\'evy process $L$ only act along the eigenbasis of the generator $A$ in an independent way. The second approach is utilised for example in the works  \cite{brzezniak_neerven} by Brze\'zniak and
van Neerven for cylindrical Brownian motion,  \cite{brzezniak_zabczyk} by Brze\'zniak and Zabczyk for a cylindrical L\'evy process modelled as a subordinated cylindrical Brownian motion or \cite{PeszatZab12} by Peszat and  Zabczyk for a general case. Although this approach is elegant and natural, one typically obtains conditions for the existence of a weak solution in terms of the larger space which per se is not related to the equation under consideration. 

The stochastic Cauchy problem \eqref{SCP10} exhibits a new phenomena which has not been observed in the Gaussian setting, i.e.\ when $L$ is a cylindrical Brownian motion: the solution may exist as a stochastic process in the underlying Hilbert space $V$, but its trajectories are highly irregular; see for example Brze\'zniak et al \cite{Brzetal}, 
Brze\'zniak and Zabczyk \cite{brzezniak_zabczyk} and Peszat and Zabczyk \cite{peszat_zabczyk}. In fact, the only positive results on some analytical regularity of the paths can be found in  Liu and Zhai \cite{LiuZhai16} and Peszat and Zabczyk \cite{PeszatZab12}. However, these results are very restrictive and do not cover most of the considered examples of cylindrical L\'evy processes.

For establishing the existence of a weak solution, the general noise considered in our work prevents us from following standard arguments as exploited for genuine L\'evy processes,  attaining values in $V$. In this case,  one can either utilise the L\'evy-It{\^o} decomposition
together with a stochastic Fubini theorem for the martingale part as it is done by Applebaum in \cite{app_martingale} or by Peszat and Zabczyk in \cite{peszat_zabczyk}, or an integration by parts formula as accomplished by  Chojnowska-Michalik in \cite{michalik}. However, since our noise is cylindrical it does not enjoy a 
 L\'evy-It{\^o} decomposition in the underlying Hilbert space. Also exploiting an integration by parts  formula seems to be excluded as such a formula would indicate certain regularity  of the paths. We circumvent these problems by applying a stochastic Fubini theorem but without decomposing the integrator of the stochastic integral. 
 
However, also the stochastic Fubini theorem  cannot be derived by standard methods due to the lack of a   L\'evy-It{\^o} decomposition of the cylindrical L\'evy process in the underlying Hilbert space.   Even more, most of the results require finite moments of the stochastic integral, which is not guaranteed in our general framework; see Applebaum \cite{app_martingale}, Da Prato and Zabczyk \cite{daprato_zab} and Filipovi{\'c} et al \cite{teichman}.
In our work, we succeed in establishing a stochastic Fubini result by using the observation that the iterated integrals can be considered as the inner product in 
a space of integrable functions. This  observation and its elegant utilisation originates from the work van Neerven and Veraar  \cite{neerven_veraar}. Similar as in this work \cite{neerven_veraar}, however without having the theory of 
$\gamma$-radonifying operators at hand, we derive by tightness arguments, that the parameterised stochastic integral with respect to a cylindrical L\'evy process defines a random variable in a space of integrable functions, which enables us to consider the iterated integrals as an inner product. 
 
Surprisingly, our stochastic Fubini result and its application to the representation of the weak solution of \eqref{SCP10} immediately yields that the trajectories of the solution are scalarly square integrable. As far as we know, this is the first positive result on an analytical path property of the solution of the stochastic Cauchy problem independent of the driving cylindrical L\'evy process. Furthermore, having established the representation of the solution for \eqref{SCP10} by a stochastic integral, which itself is based on the rich theory of 
cylindrical measures and cylindrical random variables, enables us to study further properties of the solution and its trajectories. More specifically, we show without any assumptions on the cylindrical L\'evy process that the solution process is a Markov process and continuous in probability. For specific examples of cylindrical L\'evy processes, 
these properties were established in \cite{brzezniak_zabczyk} and \cite{priola_zabczyk}. However, there the arguments are strongly restricted to the specific examples under consideration.  We are also able to provide a condition in our general framework which implies the non-existence of a modification of the solution with scalarly c{\`a}dl{\`a}g trajectories,   a phenomena which has often been observed in several publications above cited. In fact, our condition covers all the examples in the literature, where this phenomena has been observed, and it does not only strengthen the result in a few cases but also allows a geometric interpretation. 

Our article begins with Section \ref{se.preliminiaries} where we fix most of our notations and introduce cylindrical L\'evy processes and the stochastic integrals. In Section \ref{se.Fubini} we present and establish the stochastic Fubini theorem for 
stochastic integrals with respect to cylindrical L\'evy processes  and deterministic integrands. In the following Section \ref{se.Cauchy} we apply the stochastic Fubini theorem to derive the existence of the weak solution of the stochastic Cauchy problem \eqref{SCP10}. In the final Section \ref{se.properties} we present some fundamental properties of the solution.

\section{Preliminaries}\label{se.preliminiaries}


Let $U$  and $V$ be  separable Hilbert spaces with norms $\|\cdot\|$ and orthonormal bases $(e_k)_{k\in \mathbb{N}}$ and $(h_k)_{k\in \mathbb{N}}$, respectively. We identify the dual of a Hilbert space by the space itself. The Borel $\sigma$-algebra of $U$ is denoted by $\Borel(U)$.
The space of Radon probability measures on $\Borel(U)$ is denoted by 
${\mathcal M}(U)$ and is equipped with the Prokhorov metric.
The space of all linear, bounded operators from $U$ to $V$ is 
denoted by $\mathcal{L}(U,V)$, equipped with the operator norm $\|\cdot \|_{\text{op}}$; its subset of Hilbert-Schmidt operators is denoted by
$\mathcal{L}_2(U,V)$, equipped with the norm $\|\cdot \|_{\text{HS}}$.
It follows from the standard characterisation of compact sets in Hilbert spaces, that a set $K \subseteq \mathcal{L}_2(U,V)$ is compact if and only if it is closed, bounded and satisfies
\begin{equation}\label{hscompact}
\lim \limits_{N \rightarrow \infty}\sup \limits_{\phi \in K}\sum_{k=N+1}^{\infty}\|\phi e_k\|^2=0.
\end{equation}
The space of all continuous functions from $[0,T]$ to $U$ is denoted by 
$C([0,T];U)$ and it is equipped with the supremum norm $\|\cdot                                                                                                                                                                                                                                                                                                                                                                                                                                                                                                                                                                                                                                                                                                                                                                                                                                                                                                                                                                                                                                                                                                                                                                                                                                                                                                                                                                                                                                                                                                                                                                                                                                                                                                                                                                                                                                                                                                                                                                                                                                                                                                                                                                                                                                                                                                                                                                                                                                                                                                                                                                                                            \|_{\infty}$. The space of all equivalence classes of measurable functions $ f\colon \Omega \rightarrow U$ on a probability space $(\Omega, \mathcal{F}, P)$
is denoted by $L^0_P(\Omega; U)$, and it is equipped with the topology of convergence in probability. The space $L^p_P(\Omega;U)$ contains all equivalence classes of measurable functions $f\colon\Omega\to U$ which are $p$-th integrable,
and it is equipped with the usual norm. 

For a subset $\Gamma$ of $U$, sets of the form 
\[ C(u_1, ... , u_n; B) :=\{ u \in U: (\langle u, u_1 \rangle, ... , \langle u, u_n \rangle) \in B\},\]
for $u_1, ... , u_n \in \Gamma$ and $B\in \Borel(\mathbb{R}^n)$ are called {\em cylindrical sets with respect to $\Gamma$}; the set of all these cylindrical sets is denoted by $\mathcal{Z}(U,\Gamma)$ and it is a $\sigma$-algebra if $\Gamma$ is finite and otherwise an algebra. A function $\mu \colon \mathcal{Z}(U,U) \to [0,\infty]$ is called a \emph{cylindrical measure}, if for each finite subset $\Gamma \subseteq U$ the restriction of $\mu$ on the $\sigma$-algebra $\mathcal{Z}(U, \Gamma)$ is a                                                                                                                                                                                                                                                                                                                                                                                                                                                                                                                                                                                                                                    measure. A cylindrical measure is called finite if $\mu (U) < \infty$ and a cylindrical probability measure if $\mu(U) =1$.
A \emph{cylindrical random variable} $Z$ in $U$ is a linear and continuous map
\[Z\colon U \rightarrow L_P^0(\Omega; \mathbb{R}).\]
Each cylindrical random variable $Z$ defines a cylindrical probability measure $\lambda$ by 
\begin{align*}
\lambda\colon  \mathcal{Z}(U,U) \to [0,1],\qquad
\lambda(C)=P\big( (Zu_1,\dots, Zu_n)\in B\big),
\end{align*}
for cylindrical sets $C=C(u_1, ... , u_n; B)$. The cylindrical probability measure $\lambda$ is called the {\em cylindrical distribution} of $Z$. The characteristic function of a cylindrical random variable $Z$ is defined by
\[\phi_{Z}\colon U \rightarrow \mathbb{C}, \qquad \phi_{Z}(u)=E[\exp (iZu)],\]
and it uniquely determines the cylindrical distribution of $Z$.

A family $(L(t):\, t\ge 0)$ of cylindrical random variables is called 
a {\em cylindrical process}. It is called a  \emph{cylindrical L\'evy process} if for all $u_1, ... , u_n \in U$ and $n\in \mathbb{N}$, the stochastic process
$((L(t)u_1, ... , L(t)u_n): t \ge 0)$
is a L\'evy process in $\mathbb{R}^n$. This concept is introduced in \cite{app} and it naturally extends the notion of a cylindrical Brownian motion.  The characteristic function of $L(t)$ for all $t \ge 0$ is given by
\[\phi_{L(t)} \colon U \rightarrow \mathbb{C}, \qquad \phi_{L(t)}(u)=\exp\big(t\Psi(u)\big),\]
where $\Psi \colon U \rightarrow \mathbb{C}$ is called the symbol of $L$, and is of the form
\[\Psi(u) = ia(u) - \frac{1}{2}\langle Qu, u\rangle +\int_U\left(e^{i\langle u, h \rangle}-1-i\langle u, h \rangle \1_{B_{\mathbb{R}}}(\langle u, h \rangle)\right)\mu (\udd h),\]
where $a \colon U \rightarrow \mathbb{R}$ is a continuous mapping with $a(0)=0$, $Q \colon U \rightarrow U$ is a positive, symmetric operator  and $\mu$ is a cylindrical measure on $\mathcal{Z}(U,U)$ satisfying
\[\int_U \left( \langle u, h \rangle^2 \wedge 1 \right) \mu(\udd h) < \infty \qquad \mathrm{for\;all\;}u \in U.\]
We call $(a, Q, \mu)$ the \emph{(cylindrical) characteristics of $L$};
see \cite{Riedle_infinitely}.


A function $g\colon [0,T] \to U$ is called {\em regulated} if $g$ has only discontinuities of the first kind. The space of all regulated functions 
is denoted by $R([0,T];U)$ and it is a Banach space when equipped 
with the supremum norm; see \cite[Ch.II.1.3]{bourbaki} for this and other properties 
we will use. A function $f\colon [0,T] \to \mathcal{L}(U,V)$ is called \emph{weakly in $R([0,T];U)$} if $f^*(\cdot)v$ is in $R([0,T];U)$ for each $v \in V$. For such a function one can introduce the stochastic integral
$\int_0^T \1_A(t)f^\ast(t)v\ud L(t)$ for all $v\in V$ and $A\in\Borel([0,T])$, which defines  
a cylindrical random variable
\[Z_A \colon V \to L_P^0(\Omega; \mathbb{R}), \qquad Z_Av= \int_0^T \1_A(t)f^*(t)v\ud L(t).\]
A function $f \colon [0,T] \to\mathcal{L}(U,V)$ is called \emph{stochastically integrable with respect to $L$} if $f$ is weakly in $R([0,T];U)$ and if for each $A \in \Borel([0,T])$ there exists a $V$-valued random variable $I_A$ such that
\begin{align*}
\langle I_A, v \rangle = Z_Av  \qquad \mathrm{for\;all\;} v \in V .
\end{align*}
 The stochastic integral $I_A$  is also denoted by $\int_Af(s)\ud L(s):=I_A$. 
From the very definition it follows that 
\begin{align} \label{eq.change-int-product}
\left\langle \int_A f(s)\ud L(s), v\right\rangle = \int_A f^*(s)v\ud L(s)
\qquad\text{for all }v\in V.
\end{align}
Necessary and sufficient conditions for the stochastic integrability of a function are derived in the work \cite{OU}. In particular, for Hilbert spaces $U$ and $V$ it states that
a function $f \colon [0,T] \to \mathcal{L}(U,V)$, which is weakly in $R([0,T];U)$, is stochastically integrable with respect to a cylindrical L\'evy process with characteristics $(a,Q,\mu)$  if and only if the following is satisfied:
\begin{enumerate}
\item[(1)] for every sequence $(v_n)_{n\in \mathbb{N}} \subseteq V$ converging weakly to $0$ and $A \in \Borel([0,T])$ we have
	\beq \lim_{n\to\infty}\int_Aa(f^*(s)v_n)\ud s=0.\label{sicond12}\eeq
\item[(2)]  \itemEq{\int_0^T \text{tr} \big[f(t)Qf^*(t)\big]\ud t <\infty;\label{sicond2}}
\item[(3)] \itemEq{\limsup \limits_{m\to \infty}\sup \limits_{n\ge m}\int_0^T\int_U\left(\sum_{k=m}^{n}\langle f(t) u, h_k\rangle^2\wedge 1\right)\mu(\udd u)\ud t=0.\label{sicond3}}
\end{enumerate}

	\section{Stochastic Fubini Theorem}\label{se.Fubini}

In this section, we prove a stochastic version of Fubini's theorem, which will play an essential role later. As the cylindrical L\'evy process $L$  does not enjoy a L\'evy-It{\^o} decomposition in $U$ we cannot exploit standard arguments. We will always denote by $(a,Q,\mu)$ the characteristics of $L$. Let $(S, \mathcal{S},\eta)$ be a finite measure space and $L^2_{\eta}(S;U)$ the Bochner space.
 \begin{thm}\label{SFT}
	 	Let $g\colon  S \times [0,T] \to U$ be a function satisfying the following assumptions:
	 	\begin{enumerate}[{\rm(a)}]
	 		\item $g$ is $\mathcal{S} \otimes \Borel([0,T])$ measurable;
	 		\item  the map  $t \mapsto g(s,t)$ is regulated for $\eta$-almost all $s\in S$;
	 		\item the map $t \mapsto g(\cdot,t)$ belongs to $R([0,T];L^2_\eta(S;U))$.
	 		\end{enumerate}
	 	 Then, $P$-almost surely, we have
\begin{align*}
	 		\int_S\int_0^Tg(s,t) \ud L(t)\,\eta (\udd s) = \int_0^T\int_Sg(s,t) \,\eta(\udd s)\ud  L(t),
\end{align*}
and all integrals are well defined; in particular, we have
\begin{enumerate}
\item[{\rm (1)}] 
 the map $t \mapsto \int_S g(s,t)\,\eta(\udd s)$ is in $R([0,T];U)$;
\item[{\rm (2)}]  
the process $\Big(\int_0^Tg(s,t) \ud L(t):\, s\in S\Big)$
defines a random variable in $L^2_\eta(S;\R)$. 
\end{enumerate}
	 \end{thm}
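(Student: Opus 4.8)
It is convenient to regard the parameterised integral as a single vector--valued stochastic integral. Define $F\colon[0,T]\to\mathcal{L}(U,L^2_\eta(S;\R))$ by $(F(t)u)(s):=\langle u,g(s,t)\rangle$; then $F^\ast(t)\psi=\int_S\psi(s)g(s,t)\,\eta(\udd s)\in U$, and assertion (2), together with $\int_0^T g(s,t)\,\ud L(t)$ being ($P$-a.s.) the $s$--th coordinate of $\int_0^T F(t)\,\ud L(t)$, amounts to the statement that $F$ is stochastically integrable with respect to $L$ in the sense recalled above, while the stochastic Fubini identity is the defining relation \eqref{eq.change-int-product} for $\int_0^T F(t)\,\ud L(t)$ evaluated at $\psi\equiv 1$. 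I first dispose of the easy points. Since $\eta$ is finite, $\mathcal E\colon L^2_\eta(S;U)\to U$, $\mathcal E\Phi:=\int_S\Phi(s)\,\eta(\udd s)$, is bounded and linear, and bounded linear maps carry $R([0,T];\cdot)$ into $R([0,T];\cdot)$; applied to $t\mapsto g(\cdot,t)$ (which lies in $R([0,T];L^2_\eta(S;U))$ by (c)) this gives (1). Moreover $F$ is weakly in $R([0,T];U)$ since $F^\ast(\cdot)\psi=\mathcal E_\psi\circ(t\mapsto g(\cdot,t))$ with $\mathcal E_\psi\Phi:=\int_S\psi(s)\Phi(s)\,\eta(\udd s)$ bounded, and conditions (1)--(3) for scalar integrands are trivial (for target $\R$ condition (3) is vacuous, (2) holds by boundedness of regulated functions, and (1) holds because a regulated $U$--valued function has relatively compact range on which $a$ is uniformly continuous); hence the scalar integrals on the right--hand side of the identity, and by (b) also on the left--hand side, are well defined.

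For (2) I would approximate. Using that a regulated function is a uniform limit of step functions, choose for each $n$ a partition of $[0,T]$ of mesh $\le 1/n$ on whose open subintervals the oscillation of $t\mapsto g(\cdot,t)$ in $L^2_\eta(S;U)$ is $\le 2^{-n}$, fix an interior point of each subinterval, and let $\sigma_n\colon[0,T]\to[0,T]$ be the resulting step function; then $g_n(s,t):=g(s,\sigma_n(t))$ still satisfies (a)--(c), one has $\sup_t\|g_n(\cdot,t)-g(\cdot,t)\|_{L^2_\eta(S;U)}\le 2^{-n}$, all values $g_n(\cdot,t)$ and $g(\cdot,t)$ lie in one compact set $\mathcal K\subseteq L^2_\eta(S;U)$, and for $\eta$--a.a.\ $s$ the set $\{g_n(s,\cdot)(t):n,t\}$ is relatively compact in $U$ while $g_n(s,\cdot)\to g(s,\cdot)$ in $L^2([0,T];U)$. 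For such a step function $g_n$ the random field $X_n(s):=\int_0^T g_n(s,t)\,\ud L(t)$ is a finite linear combination of cylindrical increments of $L$ evaluated at $\mathcal S$--measurable $U$--valued functions; in the natural jointly measurable version it defines an $L^2_\eta(S;\R)$--valued random variable, and for $g_n$ the asserted identity is a trivial interchange of a finite sum with $\int_S(\cdot)\,\eta(\udd s)$. One then passes to the limit $n\to\infty$.

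The step I expect to be the main obstacle is a moment--free control of this limit: for step functions $d\colon[0,T]\to L^2_\eta(S;U)$ with values in a compact set one needs that $P\bigl(\int_S|\int_0^T d(s,t)\,\ud L(t)|^2\,\eta(\udd s)>\epsilon\bigr)\to 0$ as $\sup_t\|d(\cdot,t)\|_{L^2_\eta(S;U)}\to 0$, uniformly over such $d$ whose values shrink to $0$ (and, separately, an estimate giving $P(\,\cdot>R)\to0$ as $R\to\infty$). I would obtain this by fixing $s$, using that $\int_0^T d(s,t)\,\ud L(t)$ is infinitely divisible with Lévy exponent $\theta\mapsto\int_0^T\Psi(\theta d(s,t))\,\ud t$, decomposing it in law into a deterministic shift, a centred part whose variance is bounded by $\int_0^T(\langle Qd(s,t),d(s,t)\rangle+\beta(d(s,t)))\,\ud t$ with $\beta(u):=\int_U(\langle u,h\rangle^2\wedge 1)\,\mu(\udd h)$, and a part vanishing with probability at least $1-\int_0^T\beta(d(s,t))\,\ud t$, then integrating in $s$ and applying Markov's inequality. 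What makes this work is, first, that $\beta$ is continuous at $0$ with $\beta(0)=0$ — which follows from continuity of $\Psi$ at $0$ (write $\operatorname{Re}\Psi(u)=-\tfrac12\langle Qu,u\rangle-\int_U(1-\cos\langle u,h\rangle)\,\mu(\udd h)$, average over a scalar factor, and use $1-\tfrac{\sin x}{x}\ge c\,(x^2\wedge 1)$; this also bounds $\beta$ by a continuous function, so $\beta$ is bounded on compacts — and, second, that compactness of $\mathcal K$ makes $\{s\mapsto\|\Phi(s)\|_U^2:\Phi\in\mathcal K\}$ uniformly $\eta$--integrable, so the controlling functional tends to $0$ along $d=g_n-g_m$ and along $d=(\mathrm{Id}-R_k)(g_n(\cdot,\cdot))$, where $R_k=\Pi_k\otimes\mathrm{Id}_U$ and $\Pi_k$ projects $L^2_\eta(S;\R)$ onto its first $k$ basis vectors. (This is precisely the content of condition (3) for $F$, the subtlety being the interplay between the block--tails of the covariance operators $K_{g(\cdot,t)}:=\int_S g(s,t)\otimes g(s,t)\,\eta(\udd s)$ and the cylindrical Lévy measure.)

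Granting the estimate: applied to $d=g_n-g_m$ it shows $(X_n)$ is Cauchy in $L^0_P(\Omega;L^2_\eta(S;\R))$, hence converges there to some $X$, which is (2); applied to $d=(\mathrm{Id}-R_k)(g_n(\cdot,\cdot))$, together with $(\mathrm{Id}-\Pi_k)X_n=\bigl(s\mapsto\int_0^T(\mathrm{Id}-R_k)g_n(\cdot,t)(s)\,\ud L(t)\bigr)$ and $\sup_{n,t}\|(\mathrm{Id}-R_k)g_n(\cdot,t)\|_{L^2_\eta(S;U)}\to0$ (uniformity of the strong convergence $R_k\to\mathrm{Id}$ on the compact $\mathcal K$), it yields tightness of the laws of $X_n$ on $L^2_\eta(S;\R)$. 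For $\psi\in L^2_\eta(S;\R)$ one has $\langle X_n,\psi\rangle_{L^2_\eta}=\int_0^T\bigl(\int_S\psi(s)g_n(s,t)\,\eta(\udd s)\bigr)\ud L(t)$, and since $t\mapsto\int_S\psi(s)g_n(s,t)\,\eta(\udd s)$ converges uniformly to $t\mapsto\int_S\psi(s)g(s,t)\,\eta(\udd s)$ while the cylindrical stochastic integral is continuous under uniform convergence of regulated integrands (immediate from continuity of $\Psi$ at $0$), $\langle X_n,\psi\rangle\to\int_0^T F^\ast(t)\psi\,\ud L(t)$ in probability; combined with $X_n\to X$ this identifies $\langle X,\psi\rangle$ for every $\psi$. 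Passing to a subsequence along which $X_n(s)\to X(s)$ in probability for $\eta$--a.a.\ $s$ and using continuity of the scalar cylindrical integral under $L^2([0,T];U)$--convergence of the integrands $g_n(s,\cdot)\to g(s,\cdot)$ gives $X(s)=\int_0^T g(s,t)\,\ud L(t)$ $P$--a.s.\ for $\eta$--a.a.\ $s$. Finally, taking $\psi\equiv 1$,
\[
\int_S\!\int_0^T g(s,t)\,\ud L(t)\,\eta(\udd s)=\langle X,\1_S\rangle=\lim_n\langle X_n,\1_S\rangle=\lim_n\int_0^T\!\int_S g_n(s,t)\,\eta(\udd s)\,\ud L(t)=\int_0^T\!\int_S g(s,t)\,\eta(\udd s)\,\ud L(t)
\]
in probability, which is the desired identity.
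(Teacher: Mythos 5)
Your overall architecture is the same as the paper's: your operator $F$ is exactly the paper's $\Phi=J(g)$ from Lemma \ref{isometric}, the finite-rank step-function approximation corresponds to the paper's $\Phi_{m,n}$ and $g_{m,n}$ (you truncate along a basis of $L^2_\eta(S;\R)$ rather than of $U$, an inessential variant), and the identification of the limit through scalar projections followed by evaluation at $\psi\equiv 1$ via \eqref{eq.change-int-product} is precisely how the paper concludes. The divergence, and the gap, lies in the step you yourself single out as the main obstacle: proving that the approximating integrals converge in $L^0_P(\Omega;L^2_\eta(S;\R))$.

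Your proposed estimate has two problems. First, the decomposition of $\int_0^T d(s,t)\ud L(t)$ into shift, Gaussian part, small jumps and large jumps is, for each fixed $s$, only a decomposition in law; the pieces are not defined as jointly measurable functions of $(s,\omega)$, so the quantities $\int_S\abs{J^{\mathrm{small}}_s}^2\,\eta(\udd s)$ and $P(\exists s\colon J^{\mathrm{big}}_s\neq 0)$ to which you want to apply Markov's inequality are not defined. Realising the decomposition jointly would amount to a L\'evy--It\^o decomposition of the cylindrical process in $U$, which is exactly what is unavailable and what this section is designed to circumvent; it does exist for the genuine $L^2_\eta(S;\R)$-valued random variable $\int_0^T F(t)\ud L(t)$, but the existence of that random variable is the assertion you are trying to prove. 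Second, the claim that compactness of $\mathcal K$ forces $\int_S\int_0^T\beta(d(s,t))\ud t\,\eta(\udd s)\to 0$ when $\sup_t\|d(\cdot,t)\|_{L^2_\eta(S;U)}\to 0$ is not a consequence of continuity of $\beta$ at $0$ in $U$, since $\beta$ is not dominated by $C\|u\|^2$; it is essentially Condition \eqref{sicond3} for $F$, which the paper establishes in Lemma \ref{phistochint} by pushing the cylindrical L\'evy measure forward under the Hilbert--Schmidt operators $\Phi(t)$, invoking Proposition 5.3 of \cite{adam} to obtain a relatively compact family of genuine infinitely divisible laws, and then Theorem VI.5.3 of \cite{partha} to obtain the uniform bound \eqref{eq.bounded}. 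The paper then avoids your Markov step altogether: convergence in $L^0_P(\Omega;L^2_\eta(S;\R))$ follows from Jakubowski's criterion \cite[Lemma 2.4]{jaku1} (scalar convergence in probability plus tightness of the laws, the latter again from Proposition 5.3 and Lemma 5.4 of \cite{adam}). A smaller but genuine error: the map $f\mapsto\int_0^T f(t)\ud L(t)$ is not continuous under $L^2([0,T];U)$-convergence of integrands (consider $\alpha$-stable noise); the correct argument, which you partly have, is bounded pointwise a.e.\ convergence of $g_n(s,\cdot)$ combined with continuity and local boundedness of the symbol $\Psi$ and dominated convergence as in \eqref{eq.char-int-g}.
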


We divide the proof of the theorem in several lemmas.
The theory of  integration  developed in  \cite{OU} applies to deterministic integrands
$\Phi\colon [0,T]\to \mathcal{L}(U,V)$ which are regulated. In this case, the function $\Phi$ is
integrable if and only if it satisfies the Conditions \eqref{sicond12}, \eqref{sicond2} and \eqref{sicond3}. The following lemma shows that if $\Phi$
is Hilbert-Schmidt valued these conditions are already satisfied, i.e.\ $\Phi$ is stochastically integrable. This is in line with the general integration theory for random integrands developed in \cite{adam}, where the random integrands are assumed to have c{\'a}gl{\'a}d trajectories. 
\begin{lemma} \label{phistochint}  
Every regulated function 
$\Phi\colon [0,T]\to \mathcal{L}_2(U,V)$ is stochastically integrable with respect to $L$.
\end{lemma}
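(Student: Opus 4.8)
Here is how I would prove Lemma~\ref{phistochint}.

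\medskip

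The plan is to invoke the characterisation from \cite{OU} and to verify its three conditions \eqref{sicond12}, \eqref{sicond2} and \eqref{sicond3} for $f=\Phi$. Two preliminary facts will be used throughout. Since $\Phi$ is regulated with values in the Banach space $\mathcal{L}_2(U,V)$, it is a uniform limit of $\mathcal{L}_2(U,V)$-valued step functions; hence it is bounded, $M:=\sup_{t\in[0,T]}\|\Phi(t)\|_{\text{HS}}<\infty$, and its range is totally bounded, thus relatively compact, in $\mathcal{L}_2(U,V)$. Applying the isometry $\phi\mapsto\phi^\ast$ of $\mathcal{L}_2(U,V)$ onto $\mathcal{L}_2(V,U)$ together with the compactness criterion \eqref{hscompact} (for the orthonormal basis $(h_k)$ of $V$) we obtain
\[
 \beta_m:=\sup_{t\in[0,T]}\Big(\sum_{k=m}^{\infty}\|\Phi^\ast(t)h_k\|^2\Big)^{1/2}\longrightarrow 0\qquad(m\to\infty).
\]
Moreover $\Phi$ is weakly in $R([0,T];U)$, because $\|\Phi^\ast(t)v-\Phi^\ast(s)v\|\le\|\Phi(t)-\Phi(s)\|_{\text{HS}}\|v\|$ shows that $\Phi^\ast(\cdot)v$ is regulated for each $v\in V$.

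\medskip

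Condition \eqref{sicond2} is immediate: $\text{tr}\big[\Phi(t)Q\Phi^\ast(t)\big]=\sum_k\langle Q\Phi^\ast(t)h_k,\Phi^\ast(t)h_k\rangle\le\|Q\|_{\text{op}}\|\Phi(t)\|_{\text{HS}}^2\le\|Q\|_{\text{op}}M^2$, which is integrable on $[0,T]$. For condition \eqref{sicond12}, let $(v_n)_{n\in\mathbb{N}}\subseteq V$ converge weakly to $0$ and put $C:=\sup_n\|v_n\|<\infty$. Cover the relatively compact set $\{\Phi^\ast(t):t\in[0,T]\}$ by finitely many $\|\cdot\|_{\text{HS}}$-balls of radius $\varepsilon$ around Hilbert--Schmidt, hence compact, operators $\psi_1,\dots,\psi_N$. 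Using $\psi_i v_n\to0$ for each $i$ and the operator-norm bound $\|(\Phi^\ast(t)-\psi_i)v_n\|\le\|\Phi^\ast(t)-\psi_i\|_{\text{HS}}\|v_n\|\le\varepsilon C$, we get $\sup_{t\in[0,T]}\|\Phi^\ast(t)v_n\|\to0$. Since $a$ is continuous with $a(0)=0$, the modulus $\omega(\delta):=\sup_{\|w\|\le\delta}|a(w)|$ satisfies $\omega(\delta)\to0$ as $\delta\downarrow0$; hence $\sup_t|a(\Phi^\ast(t)v_n)|\le\omega\big(\sup_t\|\Phi^\ast(t)v_n\|\big)\to0$, and therefore $\big|\int_A a(\Phi^\ast(s)v_n)\ud s\big|\le T\,\omega\big(\sup_{t}\|\Phi^\ast(t)v_n\|\big)\to0$ for every $A\in\Borel([0,T])$.

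\medskip

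The main point is condition \eqref{sicond3}. Let $\Pi_m$ be the orthogonal projection of $V$ onto the closed linear span of $\{h_k:k\ge m\}$, so that $\sum_{k=m}^{n}\langle\Phi(t)u,h_k\rangle^2\le\|\Pi_m\Phi(t)u\|^2$ and $\|\Pi_m\Phi(t)\|_{\text{HS}}\le\beta_m$. The key ingredient is the radonification estimate for cylindrical L\'evy measures: there is a non-decreasing function $c\colon[0,\infty)\to[0,\infty]$ with $c(\rho)\to0$ as $\rho\downarrow0$ such that
\[
 \int_U\big(\|\psi u\|^2\wedge 1\big)\,\mu(\udd u)\ \le\ c\big(\|\psi\|_{\text{HS}}\big)\qquad\text{for every }\psi\in\mathcal{L}_2(U,V).
\]
This rests on the fact that $h(w):=\int_U(\langle u,w\rangle^2\wedge 1)\,\mu(\udd u)$ is finite for all $w\in U$; since $h$ is lower semicontinuous and satisfies $h(w_1+w_2)\le 2h(w_1)+2h(w_2)$, the Baire category theorem yields $r,K>0$ with $h(w)\le K$ on $\{\|w\|\le r\}$, and the estimate then follows, the Hilbert--Schmidt hypothesis being used to reduce $\psi$ to a finite-rank operator up to an arbitrarily small remainder (cf.\ also \cite{Riedle_infinitely}). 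Granting it, $\big(\sum_{k=m}^{n}\langle\Phi(t)u,h_k\rangle^2\big)\wedge 1\le\|\Pi_m\Phi(t)u\|^2\wedge 1$, hence
\[
 \sup_{n\ge m}\int_0^T\!\int_U\Big[\Big(\sum_{k=m}^{n}\langle\Phi(t)u,h_k\rangle^2\Big)\wedge 1\Big]\mu(\udd u)\,\ud t\ \le\ T\,c(\beta_m)\longrightarrow 0\qquad(m\to\infty),
\]
which is \eqref{sicond3}. (Alternatively one may combine the bound $\int_U(\|\Phi(t)u\|^2\wedge 1)\,\mu(\udd u)\le c(M)<\infty$ with dominated convergence: for fixed $t$ the maps $u\mapsto\sum_{k\ge m}\langle\Phi(t)u,h_k\rangle^2$ decrease to $0$ and are dominated by $\|\Phi(t)u\|^2\wedge 1$, so the inner integral tends to $0$, and a further application in the $t$-variable finishes the argument.) Having verified \eqref{sicond12}, \eqref{sicond2} and \eqref{sicond3}, the criterion of \cite{OU} shows that $\Phi$ is stochastically integrable with respect to $L$.

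\medskip

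The only genuinely substantial step is the radonification estimate: its difficulty is precisely that $\mu$ is merely cylindrical, so $\int_U(\|u\|^2\wedge 1)\,\mu(\udd u)$ may be infinite and one cannot dominate $\|\psi u\|^2\wedge 1$ simply by $\|u\|^2\wedge 1$; the Hilbert--Schmidt hypothesis (equivalently, trace-class-ness of $\psi^\ast\psi$) must be exploited. Everything else relies only on boundedness of $\Phi$ and $Q$, relative compactness of the range of $\Phi$, and continuity of $a$ at the origin.
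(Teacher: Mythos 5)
Your overall strategy coincides with the paper's: verify the three conditions \eqref{sicond12}, \eqref{sicond2}, \eqref{sicond3} from \cite{OU}. Your treatment of \eqref{sicond12} and \eqref{sicond2} is correct (the paper uses compactness of each $\Phi^\ast(t)$ and dominated convergence rather than your uniform covering argument, but both work). The problem is Condition \eqref{sicond3}, which is the only substantial point, and there your argument has a genuine gap.

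Everything rests on the asserted ``radonification estimate'' $\int_U(\|\psi u\|^2\wedge 1)\,\mu(\udd u)\le c(\|\psi\|_{\text{HS}})$ with $c(\rho)\to 0$, uniformly over \emph{all} Hilbert--Schmidt $\psi$, but the justification you offer does not establish it. The Baire category argument only yields boundedness of the scalar function $h(w)=\int_U(\scapro{u}{w}^2\wedge 1)\mu(\udd u)$ on a ball of $U$; it gives no bound of the form $h(w)\lesssim\|w\|^2$ (for an $\alpha$-stable cylindrical process one has $h(w)\asymp\|w\|^\alpha$ with $\alpha<2$), so the finite-rank part cannot be summed against $\|\psi\|_{\text{HS}}^2=\sum_k\|\psi^\ast h_k\|^2$, and controlling the ``arbitrarily small remainder'' $\psi-\psi_n$ in the decomposition requires precisely the operator-level estimate you are trying to prove --- the reduction is circular. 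The estimate itself happens to be true, but proving it is real work and is not a consequence of the scalar Baire step. Your parenthetical alternative has the same gap one level up: for fixed $t$ the inner integral tends to $0$ by pushing $\mu$ forward under the single Hilbert--Schmidt operator $\Phi(t)$ (this is exactly the paper's \eqref{eq.limit=0}), but the ``further application in the $t$-variable'' needs an integrable dominating function, i.e.\ $\sup_{m}\sup_{t}f_m(t)<\infty$, which you have not supplied; $\int_U(\|\Phi(t)u\|^2\wedge1)\mu(\udd u)\le c(M)$ is again the unproven uniform bound over an HS-ball. The paper closes exactly this gap differently and more economically: the range $K=\overline{\{\Phi(t):t\in[0,T]\}}$ is \emph{compact} in $\mathcal{L}_2(U,V)$, Proposition~5.3 of \cite{adam} then makes $\{\lambda\circ\phi^{-1}:\phi\in K\}$ relatively compact, and Theorem~VI.5.3 of \cite{partha} converts this into the uniform bound $\sup_{\phi\in K}\big(\int_{\|v\|\le1}\|v\|^2(\mu\circ\phi^{-1})(\udd v)+(\mu\circ\phi^{-1})(\|v\|>1)\big)<\infty$, after which dominated convergence in $t$ finishes the proof. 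Note that the paper only needs uniformity over a compact subset of $\mathcal{L}_2(U,V)$, not over an entire HS-ball; if you want to keep your route, you must either prove the uniform-over-balls estimate properly (e.g.\ by a direct-sum/stacking argument combined with dominated convergence for the genuine L\'evy measure $\mu\circ\Psi^{-1}$ on $V\oplus V\oplus\cdots$) or fall back on the compactness of the range as the paper does.
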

\begin{proof} 	
From the inequality $\|\Phi^*(t)v\|\le \|\Phi(t)\|_{\text{HS}}\|v\|$
for all $v\in V$ and a Cauchy argument, it follows that the operator $\Phi$ is weakly in $R([0,T];U)$.
We prove the stochastic integrability of $\Phi$ by verifying  Conditions  \eqref{sicond12},  \eqref{sicond2} and \eqref{sicond3}. To verify \eqref{sicond12}, let $v_n \to 0$ weakly in $V$. As the operator $\Phi^*(t)$ is compact for
each $t\in [0,T]$, it follows  $\Phi^*(t)v_n \to 0$ in the norm topology of $U$.
	 	Since $a$ is continuous and maps bounded sets to bounded sets by Lemma 3.2 in \cite{OU},
Lebesgue's theorem on dominated convergence implies
	 	\[\int_Aa(\Phi^*(t)v_n)\ud t \to 0 \qquad \text{as $n \to \infty$ for each $A\in \Borel([0,T])$.}
	\]
Since the mapping $t\mapsto \Phi(t)$ is regulated and thus bounded,
we obtain
	 	\begin{align*}	
	 	\int_0^T \text{tr} \big[\Phi(t)Q\Phi^*(t)\big]\ud t	
	 	& = \int_0^T \left\|\Phi(t)Q^{\frac{1}{2}}\right\|_{\text{HS}}^2\ud t
	 	< \infty,
	 	\end{align*}
which shows Condition  \eqref{sicond2}.
To prove Condition \eqref{sicond3}, note that
the monotone convergence theorem guarantees
	 	\begin{align}\label{eq.prove-C3}
\sup \limits_{n\geq m}\int_0^T\int_U\left(\sum_{k=m}^{n}\langle \Phi(t) u, h_k\rangle^2\wedge 1\right)\mu(\udd u)\ud t
=\int_0^T f_m(t) \ud t,
	 	\end{align}
where for each $m\in\N$ and $t\in [0,T]$ we define
	 	\begin{align*}
	 	f_m(t) &:=\sup \limits_{n\geq m}\int_U\left(\sum_{k=m}^{n}\langle \Phi(t) u, h_k\rangle^2\wedge 1\right)\mu(\udd u).
	 	\end{align*}
Let $\lambda$ denote the cylindrical distribution of $L(1)$. 
As $\Phi(t)$ is Hilbert-Schmidt for each fixed $t\in [0,T]$,
the image measure $\lambda\circ \Phi^{-1}(t)$ is a genuine infinitely divisible 
measure with  classical L\'evy measure $\mu\circ \Phi^{-1}(t)$. Consequently, we can apply
the monotone convergence theorem and Lebesgue's theorem on dominated convergence
to obtain for each $t\in [0,T]$ that
	 	\begin{align}
	 	 f_m(t)&= \sup \limits_{n\geq m}\int_V\left(\sum_{k=m}^{n}\langle v, h_k\rangle^2\wedge 1\right)(\mu \circ \Phi^{-1}(t))(\udd v)\notag \\
	 	& = \int_V\left(\sum_{k=m}^{\infty}\langle v, h_k\rangle^2\wedge 1\right)(\mu \circ \Phi^{-1}(t))(\udd v) \to 0 \qquad\text{as  } m \to \infty. \label{eq.limit=0}
	 	\end{align}
Since the set $K:= \overline{\{\Phi(t):t\in [0,T]\}}$ is a compact subset of $\mathcal{L}_2(U,V)$ by Problem 1 in  \cite[Ch.VII.6]{Dieudonne},
Proposition 5.3 in \cite{adam} implies that the set $\{\lambda \circ \phi^{-1}: \phi \in K\}$ is relatively compact in the space of probability measures on $\Borel(V)$. Since $\lambda \circ \phi^{-1}$  is
infinitely divisible with L\'evy measure
$\mu\circ \phi^{-1}$,  Theorem VI.5.3
in \cite{partha} implies 
\begin{align*}
	\sup \limits_{\phi\in K} \int_{\|v\| \leq 1}\|v\|^2  (\mu \circ \phi^{-1})(\udd v)<\infty
\qquad\text{and}\qquad	
 \sup \limits_{\phi \in K}  (\mu \circ \phi^{-1})(\{v:\|v\| >1\})<\infty.
\end{align*}
Consequently, we obtain
	 	\begin{align}
	 	\sup_{m \in \N}\sup \limits_{t\in [0,T]} 	f_m(t)
	 	& \le 	\sup \limits_{\phi\in K} \int_{\|v\| \leq 1}\|v\|^2  (\mu \circ \phi)(\udd v) + \sup \limits_{\phi \in K} \int_{\|v\| >1} (\mu \circ \phi)(\udd v) < \infty. \label{eq.bounded}
	 	\end{align}
The limit \eqref{eq.limit=0} and the inequality \eqref{eq.bounded}
enable us to apply Lebesgue's theorem in \eqref{eq.prove-C3},
which proves Condition \eqref{sicond3}.
	 \end{proof}
	 
For some $u\in U$ and $v\in V$, we define the operator $u\otimes v\colon  U \to V$ by $(u \otimes v)(w):=\langle u, w\rangle v$. 

	 \begin{lemma}\label{discretize2}
	 	If $\Phi\colon [0,T]\to \mathcal{L}_2(U,V)$ is a regulated function, then  $\sum \limits_{j=1}^{m}e_j \otimes \Phi(\cdot)e_j$ converges to  $\Phi$ in $R\big([0,T],\mathcal{L}_2(U,V)\big)$ as $m \to \infty$.
	 \end{lemma}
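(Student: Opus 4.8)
The plan is to recognise the $m$-th partial sum as a composition with a coordinate projection and then to control the tail error by the compactness criterion \eqref{hscompact}.

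First I would compute the action of the partial sum: for $w\in U$ and $t\in[0,T]$,
\[
\Big(\sum_{j=1}^{m}e_j\otimes\Phi(t)e_j\Big)w=\sum_{j=1}^{m}\scapro{e_j}{w}\Phi(t)e_j=\Phi(t)P_mw ,
\]
where $P_m\in\mathcal{L}(U,U)$ denotes the orthogonal projection onto $\operatorname{span}\{e_1,\dots,e_m\}$. Writing $\Phi_m(t):=\Phi(t)P_m$, I note that $\Phi_m$ is again regulated and $\mathcal{L}_2(U,V)$-valued, because $\psi\mapsto\psi P_m$ is a linear contraction on $\mathcal{L}_2(U,V)$ and the composition of a regulated function with a bounded operator is regulated; hence the asserted convergence is a statement in $R\big([0,T];\mathcal{L}_2(U,V)\big)$.

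Next, since $I-P_m$ is the orthogonal projection onto $\overline{\operatorname{span}\{e_k:k>m\}}$, I would evaluate the error in Hilbert--Schmidt norm pointwise in $t$:
\[
\|\Phi(t)-\Phi_m(t)\|_{\text{HS}}^2=\|\Phi(t)(I-P_m)\|_{\text{HS}}^2=\sum_{k=m+1}^{\infty}\|\Phi(t)e_k\|^2 .
\]
Because $\Phi$ is regulated, its range is relatively compact, so $K:=\overline{\{\Phi(t):t\in[0,T]\}}$ is a compact subset of $\mathcal{L}_2(U,V)$ (exactly as invoked in the proof of Lemma~\ref{phistochint}). Applying the characterisation \eqref{hscompact} to $K$ and using $\Phi(t)\in K$ for all $t$ gives
\[
\sup_{t\in[0,T]}\|\Phi(t)-\Phi_m(t)\|_{\text{HS}}^2\le\sup_{\psi\in K}\sum_{k=m+1}^{\infty}\|\psi e_k\|^2\longrightarrow 0\qquad(m\to\infty),
\]
which is precisely convergence in $R\big([0,T];\mathcal{L}_2(U,V)\big)$.

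There is no genuine obstacle here; the argument is bookkeeping once one sees that the partial sum equals $\Phi(\cdot)P_m$. The only two points that need a line of justification are that $\Phi_m$ really belongs to $R\big([0,T];\mathcal{L}_2(U,V)\big)$ (composition with the bounded operator $P_m$) and that a regulated function has relatively compact range, which is what legitimises the use of the uniform tail bound \eqref{hscompact}.
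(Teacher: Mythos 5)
Your proof is correct and follows essentially the same route as the paper: both reduce the error to the tail sum $\sum_{k=m+1}^{\infty}\|\Phi(t)e_k\|^2$ (you via the projection $P_m$, the paper by direct computation of the Hilbert--Schmidt norm) and then invoke the compactness of $\overline{\{\Phi(t):t\in[0,T]\}}$ together with the criterion \eqref{hscompact} to get the uniform bound. The projection formulation is a slightly tidier way of organising the identical calculation.
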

	 \begin{proof}
	 	 By Problem 1 in  \cite[Ch.VII.6]{Dieudonne},  the set $K:= \overline{\{\Phi(t):t\in [0,T]\}}$ is  compact in $\mathcal{L}_2(U,V)$. By applying  \eqref{hscompact} we conclude 
	 	 \begin{align*}
	 	 \sup \limits_{t\in[0,T]}\left \|\Phi(t)-\sum_{j=1}^{m}e_j \otimes \Phi(t)e_j \right \|^2_{\text{HS}}
	 	 & =\sup\limits_{t\in[0,T]}\sum_{i=1}^{\infty}\left \|\Phi(t)e_i-\sum_{j=1}^{m}\langle e_j,e_i\rangle \Phi(t)e_j \right \|^2\\
	 	 & =\sup\limits_{t\in[0,T]}\sum_{i=m+1}^{\infty}\|\Phi(t)e_i\|^2\\
	 	 & \leq \sup\limits_{\phi \in K}\sum_{i=m+1}^{\infty}\|\phi e_i\|^2 \to 0 \text{ as } m \to \infty,                                                                                                                                                                                                                                                                                                                                                                                                                                                                                                         
	 	 \end{align*}
which completes the proof.
	 \end{proof}
	 \begin{lemma} \label{discretize}
For each regulated function $\Phi\colon [0,T] \to \mathcal{L}_2(U,V)$ there exists a sequence of partitions $\{(t_k^n)_{k=0}^{N_n}: n \in \mathbb{N}\}$  of $[0,T]$ with $\max_{0\leq k \leq N_n-1}|t^n_{k+1}-t^n_{k}|\to 0$  as $ n\to \infty$ such that the functions
	 	\begin{equation} \label{phi_nm}
	 	\Phi_{m,n}(t):= 
\begin{cases}\displaystyle
\sum_{j=1}^{m}e_j \otimes \Phi\left(\tfrac{t^{n}_{k}+t^{n}_{k+1}}{2}\right)e_j, & \text{if }  t \in (t_k^n, t_{k+1}^n),\, k=0, \ldots, N_n-1,\\
\displaystyle
\sum_{j=1}^{m}e_j \otimes \Phi(t^{n}_k)e_j, & \text{if }   t=t_k^n,\, k=0, \ldots, N_n,\\
\end{cases}
	\end{equation}
	 	satisfy \beq \label{Phinmconv} \lim\limits_{m,n \to \infty}\sup \limits_{t\in[0,T]}\|\Phi_{m,n}(t)- \Phi(t)\|_{\text{HS}}=0,\eeq and
	 	\begin{equation}\label{convinprob}
	 	\lim\limits_{m,n \to \infty}\int_0^T\Phi_{m,n}(t)\ud L(t) = \int_0^T\Phi(t)\ud L(t)\qquad \text{in probability.}\end{equation}
	 \end{lemma}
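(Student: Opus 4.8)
The plan is to combine the two approximation mechanisms already available: the finite-rank truncation in the $e_j$-basis from Lemma~\ref{discretize2}, and a step-function approximation in the time variable. First I would record that a regulated function $\Phi\colon [0,T]\to\mathcal{L}_2(U,V)$ can be uniformly approximated by step functions: this is the defining property of $R([0,T];\mathcal{L}_2(U,V))$ as a Banach space (see \cite[Ch.II.1.3]{bourbaki}). Concretely, for each $n\in\N$ choose a partition $(t_k^n)_{k=0}^{N_n}$ with mesh tending to $0$ such that the step function $\Psi_n(t):=\Phi\big(\tfrac{t_k^n+t_{k+1}^n}{2}\big)$ on $(t_k^n,t_{k+1}^n)$ and $\Psi_n(t_k^n):=\Phi(t_k^n)$ at the nodes satisfies $\sup_{t\in[0,T]}\|\Psi_n(t)-\Phi(t)\|_{\text{HS}}\to 0$; that such midpoint-based step functions work for regulated functions follows from refining the partition so that the oscillation of $\Phi$ on each subinterval is small. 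Then $\Phi_{m,n}=\sum_{j=1}^m e_j\otimes \Psi_n(\cdot)e_j$, and applying the estimate in the proof of Lemma~\ref{discretize2} to the compact set $K=\overline{\{\Phi(t):t\in[0,T]\}}$ (whose closed convex-type hull still lies in a compact set, or more simply noting $\Psi_n(t)$ always takes values in $K$) gives
\[
\sup_{t\in[0,T]}\|\Phi_{m,n}(t)-\Psi_n(t)\|_{\text{HS}}
=\sup_{t\in[0,T]}\Big(\sum_{i=m+1}^\infty\|\Psi_n(t)e_i\|^2\Big)^{1/2}
\leq\Big(\sup_{\phi\in K}\sum_{i=m+1}^\infty\|\phi e_i\|^2\Big)^{1/2}\to 0
\]
as $m\to\infty$, uniformly in $n$. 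A triangle inequality then yields \eqref{Phinmconv}.

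For \eqref{convinprob} I would argue that uniform convergence in $\mathcal{L}_2(U,V)$ of the integrands forces convergence in probability of the stochastic integrals. By Lemma~\ref{phistochint} each of $\Phi_{m,n}$ and $\Phi$ is stochastically integrable, so all the integrals in \eqref{convinprob} are well-defined $V$-valued random variables. Writing $\Delta_{m,n}:=\Phi-\Phi_{m,n}$, which is again regulated and $\mathcal{L}_2(U,V)$-valued, it suffices to show $\int_0^T\Delta_{m,n}(t)\ud L(t)\to 0$ in probability. The natural route is to control the cylindrical distribution of $\int_0^T\Delta_{m,n}^*(t)v\ud L(t)$ through its characteristic function, which is $\exp\big(\int_0^T\Psi(\Delta_{m,n}^*(t)v)\,\ud t\big)$ by the analogue of the symbol formula for the stochastic integral; each of the three terms in $\Psi$ — the drift part via Condition~\eqref{sicond12}, the Gaussian part via $\int_0^T\text{tr}[\Delta_{m,n}Q\Delta_{m,n}^*]\,\ud t\leq \|Q^{1/2}\|_{\text{op}}^2\int_0^T\|\Delta_{m,n}(t)\|_{\text{HS}}^2\,\ud t$, and the jump part via Condition~\eqref{sicond3} — can be bounded using the bounds already established in the proof of Lemma~\ref{phistochint} applied to the compact set $K':=\overline{\{\Delta_{m,n}(t): t\in[0,T],\, m,n\in\N\}}\cup\{0\}$, together with the fact that $\sup_t\|\Delta_{m,n}(t)\|_{\text{HS}}\to 0$. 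This shows $\phi_{Z_{m,n}}(v)\to 1$ pointwise in $v$, hence the cylindrical distributions converge to $\delta_0$; promoting this to convergence of the genuine $V$-valued random variables requires a tightness argument in the spirit of \cite{adam, neerven_veraar}, exploiting again that all the $\Delta_{m,n}(t)$ live in a common compact subset of $\mathcal{L}_2(U,V)$, so that Proposition~5.3 in \cite{adam} yields relative compactness of the family of laws on $\Borel(V)$.

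The main obstacle I anticipate is precisely this last point: passing from convergence of characteristic functions (i.e.\ of cylindrical distributions) to convergence in probability of the $V$-valued integrals $\int_0^T\Delta_{m,n}(t)\ud L(t)$. One cannot simply invoke Lévy's continuity theorem in $V$ without knowing the family is tight, and tightness is not automatic for cylindrical noise. The resolution is to notice that the bounds \eqref{eq.bounded}-type estimates from Lemma~\ref{phistochint} are uniform over any compact $K\subseteq\mathcal{L}_2(U,V)$, and all integrands $\Delta_{m,n}$ range within one such compact set by \eqref{Phinmconv}; combined with $\sup_t\|\Delta_{m,n}(t)\|_{\text{HS}}\to 0$ this gives both tightness and vanishing of the limit, so that the limiting law is $\delta_0$, which is equivalent to convergence to $0$ in probability. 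A secondary, more routine, subtlety is checking that the midpoint step functions genuinely approximate $\Phi$ uniformly — this is where regulatedness (only jump discontinuities, with one-sided limits everywhere) is used, and it is handled by a standard compactness-of-$[0,T]$ argument refining partitions until the oscillation on each subinterval is below a prescribed $\epsilon$.
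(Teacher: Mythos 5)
Your treatment of \eqref{Phinmconv} is essentially identical to the paper's: a uniform step-function approximation of the regulated function $\Phi$ (the paper cites Dieudonn\'e 7.6.1 for the existence of such partitions) combined with the finite-rank truncation estimate from Lemma~\ref{discretize2}, glued by a triangle inequality. The observation that $\Psi_n(t)$ always takes values in the compact set $K$, so the truncation error is controlled uniformly in $n$, is exactly the point, and this half is fine.

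For \eqref{convinprob} your architecture (scalar/characteristic-function convergence plus tightness of the laws in ${\mathcal M}(V)$) is also the paper's, but there is a concrete gap in your tightness step. Proposition~5.3 of \cite{adam} gives relative compactness of the family of \emph{image measures} $\{\lambda\circ\phi^{-1}:\phi\in K'\}$ for $\lambda$ the cylindrical distribution of $L(1)$ and $K'$ compact in $\mathcal{L}_2(U,V)$; it says nothing by itself about the laws of the stochastic integrals $\int_0^T\Delta_{m,n}(t)\ud L(t)$, which are not of the form $\lambda\circ\phi^{-1}$ for any single operator $\phi$. The paper bridges this by keeping the \emph{step} functions $\Phi_{m,n}$ as integrands: then $\int_0^T\Phi_{m,n}(t)\ud L(t)$ is a finite sum of independent increments, its law is the explicit convolution product $(\lambda\circ(\psi^n_{m,1})^{-1})^{*(t_1^n-t_0^n)}*\cdots$ with all $\psi^n_{m,k}$ in a fixed compact set $K_1$, and Lemma~5.4 of \cite{adam} (not Proposition~5.3 alone) converts this into tightness of $\{P_{m,n}\}$. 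Your choice to work with $\Delta_{m,n}=\Phi-\Phi_{m,n}$ destroys this structure, since $\Phi$ is not a step function and so $\int_0^T\Delta_{m,n}(t)\ud L(t)$ admits no such finite convolution representation. The fix is either to argue as the paper does with $\Phi_{m,n}$ directly (tightness of $\{\int_0^T\Phi_{m,n}\ud L\}$ via the convolution representation, then Jakubowski's Lemma~2.4 with the scalar convergence you already have), or, if you insist on $\Delta_{m,n}$, to first establish tightness of $\{\int_0^T\Phi_{m,n}\ud L\}$ that way and note that subtracting the single fixed random variable $\int_0^T\Phi\ud L$ preserves tightness. As written, the claim that compactness of $K'$ ``yields relative compactness of the family of laws'' of the integrals does not follow from the cited result.
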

	 \begin{proof}
 Using \cite[7.6.1]{Dieudonne}, we can construct a sequence $\{(t_k^n)_{k=0}^{N_n}: n \in \mathbb{N}\}$ of  partitions of $[0,T]$ such that $\max \limits_{0\leq k \leq N_n-1}|t^n_{k+1}-t^n_{k}|\to 0$ and that the functions
\begin{align*}
\Phi_n(t):= \left\{\begin{array}{ll}\Phi\left(\frac{t^{n}_{k}+t^{n}_{k+1}}{2}\right), & \text{if  } t \in (t_k^n, t_{k+1}^n),\, k=0, \ldots, N_n-1,\\
\Phi(t^n_k), & \text{if }t = t^n_k,\, k=0, \ldots, N_n, \end{array}
\right.
\end{align*}
     satisfy
	 	\begin{align*}
	 	\sup \limits_{t\in [0,T]}\|\Phi(t)-\Phi_n(t)\|_{\text{HS}}  \to 0\quad \text{ as } n \to \infty.
	 	\end{align*}
	 	Let $\epsilon >0$ be given. Then there exists $N \in \mathbb{N}$ such that for all $n \geq N$, we have
	 	\begin{equation}\label{phi_n}
	 	\sup \limits_{t\in [0,T]}\|\Phi(t)-\Phi_n(t)\|_{\text{HS}} \le \frac{\epsilon}{2}.
	 	\end{equation}
	 		By Lemma \ref{discretize2}, there exists $M >0$, such that for all $m \geq M$, we have
	 	\begin{equation} \label{phi_tm}
	 	\sup\limits_{t\in[0,T]} \left\|\Phi(t)-\sum_{j=1}^{m}e_j \otimes \Phi(t)e_j \right\|_{\text{HS}} \le \frac{\epsilon}{2}.
	 	\end{equation}
	 	Using \eqref{phi_n} and \eqref{phi_tm}  we have for all $n\geq N$ and $m \geq M$,
	 	\begin{align*}
	 	\sup \limits_{t\in [0,T]} &\|\Phi(t)-\Phi_{m,n}(t)\|_{\text{HS}}\\
	 	& \leq \sup \limits_{t\in [0,T]}\|\Phi(t)-\Phi_{n}(t)\|_{\text{HS}}+ \sup \limits_{t\in [0,T]}\|\Phi_{n}(t)-\Phi_{m,n}(t)\|_{\text{HS}}\\
	 	& =  \sup \limits_{t\in [0,T]}\|\Phi(t)-\Phi_{n}(t)\|_{\text{HS}}+ \sup \limits_{t\in [0,T]}\left(\sum_{k=0}^{N_{n}}\1_{\{t^{n}_{k}\}}(t)\left\|\Phi(t_k^{n})-\sum_{j=1}^{m}e_j \otimes \Phi(t^{n}_k)e_j \right\|_{\text{HS}}\right.\\
& \qquad \qquad \left.+\sum_{k=0}^{N_{n}-1}\1_{(t^{n}_{k},t^{n}_{k+1})}(t)\left\|\Phi\left(\tfrac{t^{n}_{k}+t^{n}_{k+1}}{2}\right)-\sum_{j=1}^{m}e_j \otimes \Phi\left(\tfrac{t^{n}_{k}+t^{n}_{k+1}}{2}\right)e_j \right \|_{\text{HS}}\right)\\
	 	& \leq \frac{\epsilon}{2}+\frac{\epsilon}{2} = \epsilon,
	 	\end{align*} 
which proves  \eqref{Phinmconv}.
Let $P_{m,n}$ denote the probability distribution of $\int_0^T \Phi_{m,n}(t) \ud  L(t)$.  For establishing \eqref{convinprob}, it is sufficient by   \cite[Lemma 2.4]{jaku1} to show: 
	 	\begin{enumerate}[(i)]
	 		\item \label{convweak} $\left \langle \int_0^T \Phi_{m,n}(t) \ud  L(t) - \int_0^T \Phi(t) \ud L(t), v \right \rangle \rightarrow 0\quad \text{in probability for all}\;v \in V;$
	 		\item \label{tightness}
$\{P_{m,n}: m,n \in \mathbb{N} \}$ is relatively compact in ${\mathcal M}(V)$.
	 	\end{enumerate}
As $\Phi^\ast_{m,n}(\cdot)v $ converges uniformly 
to $\Phi^\ast(\cdot)v$ for each $v\in V$ due to \eqref{Phinmconv}, Lemma 5.2 in \cite{OU} implies
\[\left \langle \int_0^T \Phi_{m,n}(t) \ud  L(t) - \int_0^T \Phi(t) \ud L(t), v \right \rangle 
=\int_0^T \big(\Phi_{m,n}^\ast(t)-\Phi^\ast(t)\big)v\ud L(t)
\rightarrow 0 \]
in probability which establishes \eqref{convweak}.  To prove \eqref{tightness}, we define the set
	 	\[K_1 := \left\{\sum_{j=1}^{m}e_j \otimes \phi e_j: m \in \mathbb{N} \cup \{\infty\},  \phi \in K\right\},\]
	 	where $K:= \overline{\{\Phi(t):t\in [0,T]\}}$. 
Since $K$ is a compact subset of $\mathcal{L}_2(U,V)$, it follows that $K_1$ is closed and bounded. By applying \eqref{hscompact} we obtain
	 	\begin{align*}
	 	\lim \limits_{N \rightarrow \infty}\sup \limits_{\psi \in K_1}\sum_{k=N+1}^{\infty}  \|\psi e_k\|^2& =\lim \limits_{N \rightarrow \infty}\sup \limits_{\phi \in K} \sup \limits_{m \in \mathbb{N}\cup \{\infty\}}\sum_{k=N+1}^{\infty} \left\|\sum_{j=1}^{m}\langle e_j, e_k \rangle \phi e_j\right\|^2\\
	 	&= \lim \limits_{N \rightarrow \infty}\sup \limits_{\phi \in K} \sum_{k=N+1}^{\infty} \|\phi e_k\|^2=0,
	 	\end{align*}
which shows that $K_1$  is a compact subset of $\mathcal{L}_2(U,V)$. Proposition 5.3 in  \cite{adam} guarantees that the set $\{\lambda \circ \psi^{-1}: \psi \in K_1\}$,   is relatively compact in the space of probability measures on $\Borel(V)$, where $\lambda$ is the cylindrical distribution of $L(1)$.
Since
	 	\[P_{m,n} = (\lambda \circ (\psi^n_{m,1})^{-1})^{*(t_1^n-t_0^n)}* \cdots *(\lambda \circ (\psi^n_{m,N_n-1})^{-1})^{*(t_{N_n}^n-t_{N_n-1}^n)},\]
	 	where $\psi^n_{m,k}:= \sum_{j=1}^{m}e_j \otimes \Phi\left(\frac{t^{n}_{k}+t^{n}_{k+1}}{2}\right)e_j$ and $\psi^n_{m,k}$ is 
	 	in the compact set $ K_1$ for each $k\in \{0,\dots, N_n\}$,   Lemma~5.4 in \cite{adam} implies  \eqref{tightness}.
	 \end{proof}

	\begin{lemma} \label{isometric}
		The mapping 
\begin{align*}		
J\colon  R\big([0,T];L^2_\eta(S;U)\big) \to  R\big([0,T];\mathcal{L}_2(U,L^2_\eta(S;\R))\big),
\qquad		J(f)(t)u = \langle u, f(t)(\cdot)\rangle,
\end{align*}
is a well defined isometric isomorphism.
	\end{lemma}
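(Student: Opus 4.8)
The plan is to recognise $J$ as the pointwise-in-$t$ application of a single isometric isomorphism of Hilbert spaces,
\[
\iota\colon L^2_\eta(S;U) \to \mathcal{L}_2\big(U,L^2_\eta(S;\R)\big),
\qquad (\iota g)u := \langle u, g(\cdot)\rangle ,
\]
so that $J(f)(t)=\iota\big(f(t)\big)$, and then to use that an isometric isomorphism of Banach spaces induces an isometric isomorphism between the corresponding spaces of regulated functions equipped with the supremum norm. (Abstractly $\iota$ realises the canonical identification of $\mathcal{L}_2(U,H)$ with the Hilbert space tensor product $U\otimes H$ for $H=L^2_\eta(S;\R)$, but it is cleaner to argue directly.) Thus the proof splits into three steps: (i) $\iota$ is a well-defined isometry; (ii) $\iota$ is surjective; (iii) transfer of (i) and (ii) to regulated functions.

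For step (i), note that for $g\in L^2_\eta(S;U)$ and $u\in U$ the Cauchy--Schwarz inequality gives $\abs{\langle u,g(s)\rangle}\le\|u\|\,\|g(s)\|$, so $s\mapsto\langle u,g(s)\rangle$ lies in $L^2_\eta(S;\R)$ and depends only on the $\eta$-equivalence class of $g$; hence $\iota g\in\mathcal{L}(U,L^2_\eta(S;\R))$ is well defined. Expanding with respect to the orthonormal basis $(e_k)$ of $U$ and using Parseval's identity together with Tonelli's theorem,
\begin{align*}
\|\iota g\|_{\text{HS}}^2
&=\sum_{k=1}^\infty\big\|(\iota g)e_k\big\|_{L^2_\eta(S;\R)}^2
=\sum_{k=1}^\infty\int_S\langle e_k,g(s)\rangle^2\,\eta(\udd s)\\
&=\int_S\|g(s)\|^2\,\eta(\udd s)
=\|g\|_{L^2_\eta(S;U)}^2 ,
\end{align*}
so $\iota$ indeed maps into $\mathcal{L}_2(U,L^2_\eta(S;\R))$ and is a linear isometry, in particular injective.

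Step (ii) is the main obstacle. Given $\Psi\in\mathcal{L}_2(U,L^2_\eta(S;\R))$, fix a representative of each $\Psi e_k\in L^2_\eta(S;\R)$. Since $\sum_k\int_S(\Psi e_k)(s)^2\,\eta(\udd s)=\|\Psi\|_{\text{HS}}^2<\infty$, Tonelli's theorem yields $\sum_k(\Psi e_k)(s)^2<\infty$ for $\eta$-almost every $s$, so $g(s):=\sum_k(\Psi e_k)(s)\,e_k$ defines an element of $U$ for $\eta$-a.e.\ $s$; the map $g$ is strongly measurable, being the $\eta$-a.e.\ limit of the $U$-valued measurable partial sums, and $\int_S\|g(s)\|^2\,\eta(\udd s)=\|\Psi\|_{\text{HS}}^2<\infty$, so $g\in L^2_\eta(S;U)$. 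By construction $\big((\iota g)e_k\big)(s)=\langle e_k,g(s)\rangle=(\Psi e_k)(s)$ for every $k$, hence $\iota g=\Psi$. Therefore $\iota$ is an isometric isomorphism of Hilbert spaces.

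For step (iii), observe that $J(f)(t)=\iota(f(t))$, and since $\iota$ is a homeomorphism it carries the one-sided limits of a regulated $f\colon[0,T]\to L^2_\eta(S;U)$ to one-sided limits of $J(f)$; hence $J(f)\in R\big([0,T];\mathcal{L}_2(U,L^2_\eta(S;\R))\big)$ with $\|J(f)\|_\infty=\sup_{t\in[0,T]}\|\iota(f(t))\|_{\text{HS}}=\sup_{t\in[0,T]}\|f(t)\|_{L^2_\eta(S;U)}=\|f\|_\infty$, so $J$ is a linear isometry. The same transport of one-sided limits applied to $\iota^{-1}$ shows that $F\mapsto\big(t\mapsto\iota^{-1}(F(t))\big)$ maps $R\big([0,T];\mathcal{L}_2(U,L^2_\eta(S;\R))\big)$ into $R\big([0,T];L^2_\eta(S;U)\big)$ and is a two-sided inverse of $J$, which gives surjectivity. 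I expect the only genuinely delicate point to be the strong measurability and $\eta$-a.e.\ convergence of the series defining the preimage $g$ in step (ii); everything else is routine bookkeeping with Parseval's identity and Tonelli's theorem.
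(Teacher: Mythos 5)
Your proof is correct and follows essentially the same route as the paper: the Parseval--Tonelli computation establishing the pointwise isometry, the series $\sum_k(\Psi e_k)(\cdot)e_k$ constructing the preimage, and the transfer of one-sided limits to the spaces of regulated functions all match the paper's argument. Your only organisational difference is to factor $J$ explicitly through the single time-independent isomorphism $\iota$ (and to build the preimage by an $\eta$-a.e.\ pointwise series rather than a series convergent in $L^2_\eta(S;U)$), which is a cosmetic rearrangement rather than a different proof.
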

\begin{proof}			
For each $t\in[0,T]$ and $f \in R\big([0,T];L^2_\eta(S;U)\big)$, the map $J(f)(t)$ defines a linear and continuous operator from $U$ to $L^2_\eta(S;\R)$ and satisfies
				\begin{align}\label{eq.J-isometric}
\|J(f)(t)\|_{\rm{HS}}^2
= \sum_{j=1}^{\infty}\int_S\langle e_j, f(t)(s)\rangle^2 \,\eta(\udd s)
= \|f(t)\|^2_{L^2_\eta(S;U)}.
       			\end{align}
As $t\mapsto f(t)$ is regulated, the isometry \eqref{eq.J-isometric} shows by a Cauchy argument that   $t\mapsto J(f)(t)$ is regulated. Consequently, $J$ is a well defined linear isometry and it is left 
to show that $J$ is surjective.

For this purpose, let $ \Phi $ be in $R\big([0,T];\mathcal{L}_2(U,L^2_\eta(S;\R))\big)$. We define
\begin{align*}
f\colon [0,T] \rightarrow L^2_\eta(S;U),
\qquad
	f(t)(\cdot):= \sum_{j=1}^{\infty}\left(\Phi(t)e_j\right)(\cdot)e_j,
\end{align*}
where the series converges in $L^2_\eta(S;U)$. As $\|f(t)\|_{L^2_\eta(S;U)}=\|\Phi(t)\|_{\mathcal{L}_2(U,L^2_\eta(S;\R))}$, the function $t\mapsto f(t)$ is regulated and satisfies
\begin{align*}
(J(f)(t))(u)  =\sum_{j=1}^{\infty}\Phi(t)e_j(\cdot)\langle u, e_j\rangle
 =\sum_{j=1}^{\infty}\Phi(t)\big(\langle u, e_j\rangle e_j\big)(\cdot)
 =\Phi(t)\left( u\right)(\cdot),
		\end{align*}
which completes the proof.
		\end{proof}
\begin{lemma}\label{le.g_nm}
Let $g\colon S\times [0,T]\to U$ be a function 
such that 
the map  $t \rightarrow g(s,t)$ is regulated for $\eta$-almost all $s\in S$,
and $\{(t_k^n)_{k=0}^{N_n}: n \in \mathbb{N}\}$ be a sequence of
 partitions of $[0,T]$ with $\max_{0\leq k \leq N_n-1}|t^n_{k+1}-t^n_{k}|\rightarrow 0$.
Then  the functions $g_{m,n}\colon S\times [0,T] \rightarrow U$ defined by
\begin{align}\label{eq.g_nm} 
 g_{m,n}(s,t):=
\begin{cases}\displaystyle
 \sum_{j=1}^{m} \left\langle e_j,g\left(s,\tfrac{t^n_k+t^n_{k+1}}{2}\right)\right\rangle e_j & \text{if }t\in (t_{k}^n,t_{k+1}^n), k=0, \ldots, N_n-1, \\
 \displaystyle
\sum_{j=1}^{m} \langle e_j,g(s,t^n_k)\rangle e_j, & \text{if }t=t_{k}^n, k=0, \ldots, N_n, \end{cases}
\end{align}
satisfy for $\eta$-almost all $s\in S$ that 
\begin{align*}
\|g_{m,n}(s,t)-g(s,t)\| \to 0 \quad \text{for Lebesgue-almost all } t\in [0,T]. 
\end{align*}
\end{lemma}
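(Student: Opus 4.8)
The plan is to fix a single ``good'' $s\in S$ and reduce the assertion to two elementary facts: that the finite-rank orthogonal projections onto $\operatorname{span}\{e_1,\dots,e_m\}$ converge strongly to the identity on $U$, and that the midpoint step approximation of a regulated function converges to it at every point of continuity.

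First I would fix $s\in S$ for which $h:=g(s,\cdot)\colon[0,T]\to U$ is regulated; by hypothesis the set of such $s$ has full $\eta$-measure, so it is enough to prove the convergence for each such $s$. Since $h$ is regulated, its set $D\subseteq[0,T]$ of discontinuity points is at most countable (see \cite[Ch.II.1.3]{bourbaki}), hence Lebesgue-null, and I will establish $\|g_{m,n}(s,t)-g(s,t)\|\to 0$ for every $t\in[0,T]\setminus D$, which is all that is needed.

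Next, denote by $P_m\colon U\to U$ the orthogonal projection $P_m u=\sum_{j=1}^m\langle e_j,u\rangle e_j$, and by $h^{(n)}$ the midpoint step function built from $h$ and the $n$-th partition, i.e.\ $h^{(n)}(t)=h\big(\tfrac{t_k^n+t_{k+1}^n}{2}\big)$ for $t\in(t_k^n,t_{k+1}^n)$ and $h^{(n)}(t_k^n)=h(t_k^n)$. By the very definition \eqref{eq.g_nm} we have $g_{m,n}(s,\cdot)=P_m\circ h^{(n)}$, and since $\|P_m\|_{\text{op}}\le 1$,
\[
\|g_{m,n}(s,t)-g(s,t)\|\le\|h^{(n)}(t)-h(t)\|+\|P_m h(t)-h(t)\|.
\]
The second summand tends to $0$ as $m\to\infty$ for every $t$ because $(e_j)_{j\in\N}$ is an orthonormal basis of $U$. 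As this bound isolates the dependence on $m$ in the projection term and the dependence on $n$ in the step-approximation term, once the first summand is shown to vanish the joint limit $m,n\to\infty$ follows by choosing the two indices independently.

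The only genuinely analytic step is therefore to show $\|h^{(n)}(t)-h(t)\|\to 0$ as $n\to\infty$ for each $t\notin D$. Here I would use continuity of $h$ at $t$: given $\epsilon>0$, pick $\delta>0$ with $\|h(r)-h(t)\|<\epsilon$ whenever $|r-t|<\delta$; since $\max_{0\le k\le N_n-1}|t_{k+1}^n-t_k^n|\to 0$, for all large $n$ the interval of the $n$-th partition containing $t$ has length $<\delta$, so either $t$ is a partition node and $h^{(n)}(t)=h(t)$, or the associated midpoint $r$ satisfies $|r-t|<\delta$ and hence $\|h^{(n)}(t)-h(t)\|=\|h(r)-h(t)\|<\epsilon$. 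Combining this with the projection estimate yields $\|g_{m,n}(s,t)-g(s,t)\|\to 0$ for all $t\in[0,T]\setminus D$, and since this holds for $\eta$-almost all $s$, the lemma follows.
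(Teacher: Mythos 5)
Your proof is correct. The overall skeleton matches the paper's: split $\|g_{m,n}(s,t)-g(s,t)\|$ by the triangle inequality into a midpoint step-approximation error and a finite-rank projection error, kill the former at every continuity point of $t\mapsto g(s,t)$ using the vanishing mesh, and kill the latter as $m\to\infty$. The step-approximation part is handled identically in both arguments, and your observation that the two error terms depend on $n$ and $m$ separately correctly yields the joint limit.

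Where you diverge, to your advantage, is in the treatment of the projection term. The paper bounds $\|P_m h^{(n)}(t)-h^{(n)}(t)\|$, i.e.\ the projection error at the point $h$ evaluated at a midpoint that moves with $n$; to make this small uniformly in $n$ it takes the supremum over the whole range of $h$ and invokes compactness of $\overline{\{g(s,t):t\in[0,T]\}}$ (via Dieudonn\'e's Problem 1 in Ch.VII.6 together with the compactness criterion \eqref{hscompact}) to get $\sup_{r}\|P_m h(r)-h(r)\|\to 0$. You instead write $P_m h^{(n)}(t)-h(t)=P_m\bigl(h^{(n)}(t)-h(t)\bigr)+\bigl(P_m h(t)-h(t)\bigr)$ and use $\|P_m\|_{\mathrm{op}}\le 1$ to fold the moving argument into the step-approximation term, so that the projection error is only needed at the single fixed vector $h(t)$, where it vanishes by the orthonormal-basis property alone. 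This is a genuinely more elementary route: it dispenses with the compactness-of-range argument entirely, at no cost to the conclusion.
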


\begin{proof}
For each $n \in \mathbb{N}$, define $g_n\colon S\times [0,T] \rightarrow U$ by
	\[g_n(s,t):= \sum_{k=0}^{N_n-1}\1_{(t_{k}^n,t_{k+1}^n)}(t)g\left(s,\tfrac{t^n_k+t^n_{k+1}}{2}\right) +\sum_{k=0}^{N_n-1}\1_{\{t_{k}^n\}}(t)g(s,t_k^n) .\]
Let $s\in S$ be such that $g(s,\cdot)$ is regulated. Then the set 
$A_s\subseteq [0,T]$ of discontinuities of $g(s,\cdot)$ has Lebesgue measure $0$ 
and for each $t\in A_s^c$ it follows that
\begin{align}\label{gnlim}
\lim_{n\to\infty}\|g_n(s,t)-g(s,t)\|=0.
\end{align}
The set $\overline{\{g(s,t):t\in [0,T]\}}$ is compact in $U$ by Problem 1 in \cite[VII.6]{Dieudonne}. The compactness criterion in Hilbert spaces implies
	\begin{align*}
	\sup \limits_{t\in [0,T]}\left \|\sum_{j=1}^m\langle g(s,t),e_j\rangle e_j-g(s,t)\right \|^2
&= \sup \limits_{t\in [0,T]}\sum_{j=m+1}^{\infty}\langle g(s,t),e_j\rangle^2\\
&\rightarrow 0 \quad\text{  as } m \rightarrow \infty. \numberthis \label{supgn}
	\end{align*}
By using \eqref{gnlim} and \eqref{supgn} we obtain for each $t\in A_s^c$ that
	\begin{align*}
&	\|g_{m,n}(s,t)-g(s,t)\| \\
&\qquad\qquad \leq \|g_{m,n}(s,t)-g_n(s,t)\|+ \|g_{n}(s,t)-g(s,t)\|\\
&\qquad\qquad = \sum_{k=0}^{N_n-1}\1_{(t_{k}^n,t_{k+1}^n)}(t)\left\|\sum_{j=1}^{m} \left\langle g\left(s,\tfrac{t^n_k+t^n_{k+1}}{2}\right),e_j\right\rangle e_j-g\left(s,\tfrac{t^n_k+t^n_{k+1}}{2}\right)\right\|\\
& \qquad\qquad\qquad +\sum_{k=0}^{N_n}\1_{\{t_{k}^n\}}(t)\left\|\sum_{j=1}^{m} \langle g(s,t_k^n),e_j\rangle e_j-g(s,t_k^n)\right\|+ \|g_{n}(s,t)-g(s,t)\|\\
&\qquad\qquad \leq \sup \limits_{r\in [0,T]}\left\|\sum_{j=1}^m\langle g(s,r),e_j\rangle e_j-g(s,r)\right\|+\|g_{n}(s,t)-g(s,t)\|\\
&\qquad\qquad \rightarrow 0 \qquad\text{as  } m,n \to\infty,
	\end{align*}
which completes the proof.
\end{proof}

In the following we extend our definition of the space $L^0_P(\Omega;U)$ 
to a finite measure space $(A,\mathcal{A},\sigma)$ and a complete metric space $(E,d)$. In this case, $L_\sigma^0(A;E)$ denotes the space of the equivalence classes of all separably-valued, measurable functions from $A$ to $E$. As before, the space is an $F$-space equipped with the metric
\begin{align}\label{KyFanMetric}
 \rho (f,g):= \int_A\big( d(f(x),g(x))\wedge 1\big)\,\sigma(\udd x).
\end{align}	
Instead of separably-valued, one can equivalently require strong measurability of the functions. 
 \begin{lemma}\label{le.L0_isomorphism} Let $(A_1, \mathcal{A}_1,\sigma_1)$ and $(A_2, \mathcal{A}_2,\sigma_2)$ be two finite measure spaces and $V$ be a separable Hilbert space. Then
	 \[L^0_{\sigma_1}\big(A_1;L^0_{\sigma_2}(A_2;V)\big)\cong L^0_{\sigma_1 \otimes \sigma_2}\big(A_1\times A_2;V\big).\]
In particular, the isomorphism is given such that for each $\mathcal{A}_1$-measurable function $F \colon A_1 \to L^0_{\sigma_2}(A_2;V)$, there corresponds an $\mathcal{A}_1 \otimes \mathcal{A}_2$-measurable function $f\colon A_1 \times A_2 \to V$ such that for $\sigma_1$-almost all $x \in A_1$, we have $F(x) = f(x,\cdot)$ in $L^0_{\sigma_2}(A_2;V)$, and conversely. 	 
\end{lemma}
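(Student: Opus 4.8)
The plan is to realise the claimed isomorphism as the unique continuous extension of the elementary ``uncurrying'' map defined on rectangle-simple functions. Write $\rho_1$, $\rho_2$, $\rho_{12}$ for the metrics \eqref{KyFanMetric} on $L^0_{\sigma_1}\big(A_1;L^0_{\sigma_2}(A_2;V)\big)$, $L^0_{\sigma_2}(A_2;V)$, $L^0_{\sigma_1\otimes\sigma_2}(A_1\times A_2;V)$; all three are $F$-spaces, hence complete. Call a function $f\colon A_1\times A_2\to V$ \emph{rectangle-simple} if $f=\sum_{i=1}^N\1_{C_i\times D_i}v_i$ with $C_i\in\mathcal{A}_1$, $D_i\in\mathcal{A}_2$, $v_i\in V$, and for such $f$ put
\[ \iota f\colon A_1\to L^0_{\sigma_2}(A_2;V),\qquad (\iota f)(x):=\sum_{i=1}^N\1_{C_i}(x)\,\big[\1_{D_i}v_i\big], \]
which is an $L^0_{\sigma_2}(A_2;V)$-valued simple function on $A_1$; clearly $\iota$ is linear on rectangle-simple functions.

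First I would show that $\iota$ carries the set of rectangle-simple functions, which is dense in $L^0_{\sigma_1\otimes\sigma_2}(A_1\times A_2;V)$, onto a dense subset of $L^0_{\sigma_1}\big(A_1;L^0_{\sigma_2}(A_2;V)\big)$: the $L^0_{\sigma_2}(A_2;V)$-valued simple functions $\sum_k\1_{A_{1,k}}w_k$ are dense in the latter by the standard approximation of strongly measurable functions, each value $w_k$ is a $\rho_2$-limit of $V$-valued simple functions on $A_2$, and substituting these approximations exhibits $\sum_k\1_{A_{1,k}}w_k$ as a $\rho_1$-limit of maps $\iota f$ with $f$ rectangle-simple. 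Next, Tonelli's theorem applied to the nonnegative integrand $\|f-g\|\wedge 1$ gives, for rectangle-simple $f,g$,
\[ \rho_{12}(f,g)=\int_{A_1}\rho_2\big(f(x,\cdot),g(x,\cdot)\big)\,\sigma_1(\udd x),\qquad \rho_1(\iota f,\iota g)=\int_{A_1}\Big(\rho_2\big(f(x,\cdot),g(x,\cdot)\big)\wedge 1\Big)\,\sigma_1(\udd x), \]
whence $\rho_1(\iota f,\iota g)\le\rho_{12}(f,g)$. Conversely, since $\rho_2\big(f(x,\cdot),g(x,\cdot)\big)\le\sigma_2(A_2)<\infty$ for every $x$, a Chebyshev estimate on the set $\{x:\rho_2(f(x,\cdot),g(x,\cdot))\ge\epsilon\}$ yields $\rho_{12}(f,g)\le\epsilon\,\sigma_1(A_1)+\sigma_2(A_2)\,\epsilon^{-1}\rho_1(\iota f,\iota g)$ for all $\epsilon\in(0,1]$, and optimising in $\epsilon$ bounds $\rho_{12}(f,g)$ by a modulus of continuity of $\rho_1(\iota f,\iota g)$. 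Thus $\iota$ is a uniformly continuous bijection, with uniformly continuous inverse, from one dense subspace onto another.

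By completeness of both spaces, $\iota$ extends uniquely to a homeomorphism $\iota\colon L^0_{\sigma_1\otimes\sigma_2}\big(A_1\times A_2;V\big)\to L^0_{\sigma_1}\big(A_1;L^0_{\sigma_2}(A_2;V)\big)$, which is linear as the continuous extension of a linear map; this is the asserted isomorphism, and its surjectivity is exactly the ``conversely'' part of the statement. To identify the extension with the section map, take rectangle-simple $f_n$ with $\rho_{12}(f_n,f)\to 0$ and, after passing to a subsequence, with $f_n\to f$ $(\sigma_1\otimes\sigma_2)$-almost everywhere; by Fubini there is a $\sigma_1$-conull set of $x$ for which $f(x,\cdot)$ is $\mathcal{A}_2$-measurable and separably valued (its range lies in the separable range of $f$) and $f_n(x,\cdot)\to f(x,\cdot)$ $\sigma_2$-almost everywhere, so that $(\iota f_n)(x)=[f_n(x,\cdot)]\to[f(x,\cdot)]$ in $\big(L^0_{\sigma_2}(A_2;V),\rho_2\big)$ by bounded convergence; since also $\iota f_n\to\iota f$ in $\rho_1$, a further subsequence converges $\sigma_1$-almost everywhere, and uniqueness of limits in the metric space $L^0_{\sigma_2}(A_2;V)$ forces $(\iota f)(x)=[f(x,\cdot)]$ for $\sigma_1$-almost all $x$, as claimed.

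I expect the two-sided metric comparison, together with the density of the image of the rectangle-simple functions, to be the main obstacle: it is what promotes the trivial ``uncurrying'' identity into a genuine isomorphism of $F$-spaces, and it requires careful bookkeeping with the truncation $\wedge 1$ and with the finiteness of $\sigma_1$ and $\sigma_2$. Once this is in place, surjectivity --- equivalently, the construction of a jointly $\mathcal{A}_1\otimes\mathcal{A}_2$-measurable representative $f$ out of a measurable $F\colon A_1\to L^0_{\sigma_2}(A_2;V)$ --- is automatic from completeness, and the remaining identification is the routine subsequence argument above.
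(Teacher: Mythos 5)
Your proposal is correct and follows essentially the same route as the paper, which simply refers to Lemma III.11.16 in Dunford--Schwartz (the $L^p$ version, proved via density of rectangle-simple functions and Fubini) ``with $L^p$-norms replaced by the metrics \eqref{KyFanMetric}''. The two-sided metric comparison you carry out --- the Tonelli identity in one direction and the Chebyshev estimate in the other, replacing the exact isometry available for $p\ge 1$ --- is precisely the adaptation the paper's one-line citation glosses over, and your subsequent extension-by-completeness and subsequence identification of the section map are the standard remaining steps of that argument.
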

			 \begin{proof} The lemma can be proved similarly as Lemma III.11.16 in  \cite{Dunford_Schwartz} by replacing $L^p$-norms for $p\ge 1$ by the corresponding  metrics as defined in \eqref{KyFanMetric}. 
			 	\end{proof}

	 \begin{proof}[Proof of Theorem \ref{SFT}]
	Lemma \ref{isometric} guarantees that the mapping 
	\begin{align*} 
	 \Phi\colon [0,T] \rightarrow \mathcal{L}_2(U,L^2_\eta (S;\R)),\qquad
	\Phi(t)u:= \langle u, g(\cdot,t)\rangle,
	\end{align*}
is well defined and regulated. Let $\Phi_{m,n}$ denote the functions defined in 
\eqref{phi_nm} for $V=L^2_\eta(S;\R)$. Lemma~\ref{discretize} together with Lemma \ref{le.L0_isomorphism} 
imply, upon passing to a subsequence, that 
for $(\eta\otimes P)$-almost all $(s,\omega) \in S\times \Omega$ we have
\begin{align}\label{fb1}
	\left(\left(\int_0^T\Phi(t)\ud L(t)\right)(\omega)\right)(s) &=\lim \limits_{m,n\rightarrow \infty}\left(\left(\int_0^T\Phi_{m,n}(t)\ud L(t)\right)(\omega)\right)(s).
\end{align}
For each $h \in L^2_\eta(S;\R)$, we obtain by \eqref{eq.change-int-product} that
	\begin{align*}
&\left \langle \int_0^T \Phi_{m,n}(t) \ud  L(t), h \right \rangle_{L^2_\eta(S;\R)}\\
&\qquad\qquad = \int_0^T \Phi^*_{m,n}(t)h \ud  L(t)\\
&\qquad\qquad = \sum_{k=0}^{N_n-1}\big(L(t_{k+1})-L(t_{k})\big)\left(\sum_{j=1}^{m}\left\langle e_j, \Phi^*\left(\tfrac{t^n_k+t^n_{k+1}}{2}\right)h\right\rangle e_j\right)\\
&\qquad\qquad = \sum_{k=0}^{N_n-1}\sum_{j=1}^{m}\left\langle \Phi\left(\tfrac{t^n_k+t^n_{k+1}}{2}\right)e_j,h\right\rangle_{L^2_\eta(S;\R)}\big(L(t_{k+1})-L(t_{k})\big)(e_j)\\
&\qquad\qquad =\left \langle \sum_{k=0}^{N_n-1}\sum_{j=1}^{m} \Phi\left(\tfrac{t^n_k+t^n_{k+1}}{2}\right)e_j\big(L(t_{k+1})-L(t_{k})\big)(e_j),h\right\rangle_{L^2_\eta(S;\R)}.
	\end{align*}
Therefore, for $\eta$-almost all $s\in S$, we have
		\begin{align}\label{eq.int-Phi-g}
\left(\int_0^T \Phi_{m,n}(t) \ud  L(t)\right)(s)& = \left(\sum_{k=0}^{N_n-1}\sum_{j=1}^{m} \Phi\left(\tfrac{t^n_k+t^n_{k+1}}{2}\right)e_j\big(L(t_{k+1})-L(t_{k})\big)(e_j)\right)(s)\notag \\
		& = \sum_{k=0}^{N_n-1}\sum_{j=1}^{m} \left(\Phi\left(\tfrac{t^n_k+t^n_{k+1}}{2}\right)e_j\right)(s)\big(L(t_{k+1})-L(t_{k})\big)(e_j)\notag \\
		&= \sum_{k=0}^{N_n-1}\big(L(t_{k+1})-L(t_{k})\big)\left(\sum_{j=1}^{m} \left(\Phi\left(\tfrac{t^n_k+t^n_{k+1}}{2}\right)e_j\right)(s)e_j\right)\notag \\
		&= \int_0^Tg_{m,n}(s,t)\ud L(t),
		\end{align}
where $g_{m,n}$ denotes the functions defined in \eqref{eq.g_nm}.
By Lemma 5.4 in \cite{OU}, we have for each $ \alpha \in \mathbb{R}$ that
		\begin{align}\label{eq.char-int-g}
&E\left[ \exp\left(i \alpha \int_0^T\big(g_{m,n}(s,t)-g(s,t)\big)\ud L(t) \right)\right] \notag \\
&\qquad\qquad = \exp \left( \int_0^T\Psi\big(\alpha\left(g_{m,n}(s,t)-g(s,t)\right)\big)\ud t\right),
		\end{align}
where $\Psi$ denotes the L{\`e}vy symbol of $L$. Note that 
\begin{align*}
\sup_{t\in [0,T]}\|g_{m,n}(s,t)-g(s,t)\|^2 &  \leq  4\sup \limits_{t \in [0,T]}\|g(s,t)\|^2 < \infty. 	
\end{align*}
Since $\Psi$ is continuous and maps bounded sets to bounded sets according to Lemma~3.2 in \cite{OU},  it follows by Lebesgue's theorem on dominated convergence and Lemma~\ref{le.g_nm} that 
\begin{align*} 
\lim \limits_{m,n \rightarrow \infty} \int_0^T\Psi\left(\alpha(g_{m,n}(s,t)-g(s,t))\right)\ud t= 0.
\end{align*}
Consequently, we deduce from \eqref{eq.char-int-g} that for $\eta$-almost all $s\in S$,
	\begin{equation}
	\lim \limits_{m,n \rightarrow \infty}\int_0^Tg_{m,n}(s,t)\ud L(t)=\int_0^Tg(s,t)\ud L(t) \qquad \text{in probability}.\label{fb2} 
	\end{equation}                                                                                                                  Comparing limits in \eqref{fb1} and \eqref{fb2} by means of \eqref{eq.int-Phi-g}, we obtain that for $\eta$-almost all $s\in S$, we have
	\begin{equation} \label{aasaaw}
		\left(\int_0^Tg(s,t)\ud L(t)\right)(\omega) =\left(\left(\int_0^T\Phi(t)\ud L(t)\right)(\omega)\right)(s) \quad\text{for $P$-almost all $\omega\in \Omega$}.
	\end{equation}
By \eqref{fb2} and Lemma \ref{le.L0_isomorphism}, the left hand side in 
\eqref{aasaaw} is $\mathcal{S}\otimes \mathcal{F}$ measurable, as well as 
the right hand side due to \eqref{fb1}.
A further application of Fubini's theorem implies for $P$-almost all $\omega \in \Omega$ that
	\begin{equation*}
	\left(\int_0^Tg(s,t)\ud L(t)\right)(\omega) =\left(\left(\int_0^T\Phi(t)\ud L(t)\right)(\omega)\right)(s) \quad\text{for $\eta$-almost all $s\in S$.}
	\end{equation*}
By integrating both sides and denoting by $1$ the function in 
$L^2_\eta(S;\R)$ which constantly equals one,  we obtain by \eqref{eq.change-int-product} that
	\begin{align*}
	\int_S \left(\int_0^Tg(s,t)\ud L(t)\right)(\omega)\, \eta(\udd s) &= \int_S \left(\left(\int_0^T\Phi(t) \ud L(t)\right)(\omega)\right)(s) \,\eta(\udd s)\\
	&= \left \langle \left(\int_0^T\Phi(t)\ud L(t)\right)(\omega), 1\right \rangle_{L^2_\eta(S;\R)}\\
	&= \left(\int_0^T\Phi^*(t)1 \ud L(t)\right)(\omega)\\
	&=\left(\int_0^T\int_Sg(s,t)\eta(\udd s)\ud L(t)\right)(\omega),
	\end{align*} 	
which completes the proof.
	  \end{proof}


\section{Cauchy Problem}\label{se.Cauchy}
 We consider the following stochastic Cauchy problem driven by a cylindrical L\'evy process  $L$ in a separable Hilbert space $U$:
\begin{equation}\label{SCP}
\begin{split}
\ud Y(t)&= AY(t)\ud t+B\ud L(t)  \qquad  \mathrm{for\; all}\;  t \in [0,T], \\
Y(0)& =  y_0,
\end{split}
\end{equation}
where $A$ is a generator of a strongly continuous semigroup $(T(t))_{t\geq 0}$ on a separable Hilbert space $V$,  $B\colon U \rightarrow V$  is a linear and continuous operator and the initial condition $y_0$ is in $V$.

In the case of $L$ beeing a cylindrical Brownian motion, the concept of weak solution is defined in \cite{daprato_zab} and the existence and uniqueness of weak solution is established.  Their definition requires  weak solutions to have almost surely Bochner integrable paths. In case of Banach spaces, a similar definition is used in \cite{neerven}. However, as it is known that the solution of \eqref{SCP} may exhibit highly irregular paths, the requirement of Bochner integrable paths is too restrictive in our situation. A weaker condition requires only that the paths  $t \mapsto \langle Y(t), A^*v\rangle$ are integrable for $v \in \mathcal{D}(A^*)$; see \cite{brzezniak_neerven},  \cite{peszat_zabczyk} and  \cite{neerven_weis}. We will impose a slightly stronger condition but which is still weaker than Bochner integrability of the paths.

\begin{defn}
A V-valued stochastic process $(Y(t):t\geq 0)$ is called {\em weakly Bochner regular} if
$t\mapsto \scapro{Y(t)}{g(t)}$ is integrable on $[0,T]$ for each $g\in C([0,T];V)$
and for every sequence $(g_n)_{n\in \mathbb{N}} \subseteq C([0,T];V)$ with $\|g_n \|_{\infty} \rightarrow 0$ as $n \to \infty$, we have 
	\[ \int_0^T \langle Y(s),g_{n}(s)\rangle \ud s \rightarrow 0 \qquad \text{in probability as }k \to\infty.\]
\end{defn}
If the stochastic process $Y$ has Bochner integrable paths on $[0,T]$, then $Y$ is also weakly Bochner regular as shown by a simple estimate.

\begin{defn}\label{de.weak-solution}
A $V$-valued, progressively measurable stochastic process $(Y(t):\,t \in [0,T])$ is called a \emph{weak solution} of  the stochastic Cauchy problem \eqref{SCP} if $Y$ is weakly Bochner regular and satisfies for every $v \in \mathcal{D}(A^*)$ and $t\in [0,T]$, $P$-almost surely, that  
\begin{align}\label{eq.def-weak-eq}
\langle Y(t), v\rangle = \langle y_0,v\rangle +\int_0^t \langle Y(s),A^*v\rangle \ud s+L(t)(B^*v).
\end{align}
\end{defn}

\begin{thm} \label{existence}
If the mapping $s \mapsto T(s)B$ is stochastically integrable on $[0,T]$ with respect to $L$, then
	\[Y(t)=T(t)y_0+\int_0^tT(t-s)B\ud L(s), \quad t\in [0,T],\] 
is a  weak solution of the stochastic Cauchy problem \eqref{SCP}.
\end{thm}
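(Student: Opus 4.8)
The plan is to verify, for $Y=T(\cdot)y_0+Z$ with the stochastic convolution $Z(t):=\int_0^tT(t-s)B\ud L(s)$, the three requirements in Definition~\ref{de.weak-solution}: that $Z(t)$ is a well-defined $V$-valued random variable and that $Y$ admits a progressively measurable modification, that $Y$ is weakly Bochner regular, and that $Y$ satisfies \eqref{eq.def-weak-eq}; the stochastic Fubini theorem (Theorem~\ref{SFT}) is the engine for the last two points. I first record the elementary facts I will use: $M:=\sup_{u\in[0,T]}\|T(u)\|_{\mathrm{op}}<\infty$; on the Hilbert space $V$ the adjoint semigroup $(T^*(u))_{u\ge0}$ is strongly continuous with generator $A^*$, hence $\int_0^\tau T^*(u)A^*v\ud u=T^*(\tau)v-v$ for all $v\in\mathcal D(A^*)$ and $\tau\ge0$, and $\|T^*(u)\|_{\mathrm{op}}=\|T(u)\|_{\mathrm{op}}$. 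For fixed $t\in[0,T]$, the substitution $u=t-s$ turns Conditions~\eqref{sicond12}, \eqref{sicond2}, \eqref{sicond3} for $s\mapsto\1_{[0,t]}(s)T(t-s)B$ into the corresponding conditions for $s\mapsto T(s)B$ over Borel subsets of $[0,T]$ (the integrands in \eqref{sicond2}, \eqref{sicond3} being nonnegative and \eqref{sicond12} being stable under this reflection); since the latter holds by assumption, $s\mapsto\1_{[0,t]}(s)T(t-s)B$ is stochastically integrable, $Z(t)$ is a well-defined, $\mathcal F_t$-measurable $V$-valued random variable, and $\langle Z(t),v\rangle=\int_0^tB^*T^*(t-s)v\ud L(s)$ for $v\in V$ by \eqref{eq.change-int-product}.

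For the progressive measurability demanded in Definition~\ref{de.weak-solution}, note that $Y$ is adapted by the preceding step and stochastically continuous: for each $v\in V$ the increment $\langle Y(t')-Y(t),v\rangle$ tends to $0$ in probability as $t'\to t$ by the representation of its characteristic function through the symbol $\Psi$ (Lemma~5.4 in \cite{OU}) together with dominated convergence, and a tightness argument as in the proof of Lemma~\ref{discretize} (Proposition~5.3 and Lemma~5.4 in \cite{adam}) upgrades this to convergence in $L^0_P(\Omega;V)$. An adapted, stochastically continuous $V$-valued process on $[0,T]$ has a progressively measurable modification, which we take as $Y$.

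To establish \eqref{eq.def-weak-eq}, fix $v\in\mathcal D(A^*)$ and $t\in[0,T]$ and split $Y=T(\cdot)y_0+Z$. The deterministic part gives $\int_0^t\langle T(s)y_0,A^*v\rangle\ud s=\langle y_0,\int_0^tT^*(s)A^*v\ud s\rangle=\langle T(t)y_0,v\rangle-\langle y_0,v\rangle$. For the stochastic part I apply Theorem~\ref{SFT} with $(S,\mathcal S,\eta)=\big([0,t],\Borel([0,t]),\mathrm{Leb}\big)$ and $g\colon S\times[0,T]\to U$, $g(s,r):=\1_{[0,s]}(r)B^*T^*(s-r)A^*v$ (zero when $r>s$): assumption~(a) is clear, for fixed $s$ the map $r\mapsto g(s,r)$ is continuous on $[0,s)$ with a single jump at $r=s$, hence regulated, and for fixed $r$ the map $s\mapsto g(s,r)\in L^2_\eta([0,t];U)$ is continuous — split the $L^2$-norm of $g(\cdot,r)-g(\cdot,r_0)$ at $\max(r,r_0)$ and use strong continuity of $T^*$, the bound $M$, and dominated convergence — so assumption~(c) holds. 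Theorem~\ref{SFT} then yields that $s\mapsto\int_0^Tg(s,r)\ud L(r)=\langle Z(s),A^*v\rangle$ is almost surely in $L^2_\eta([0,t];\R)$ and
\begin{align*}
\int_0^t\langle Z(s),A^*v\rangle\ud s=\int_0^t\Big(\int_r^tB^*T^*(s-r)A^*v\ud s\Big)\ud L(r)=\int_0^tB^*\big(T^*(t-r)v-v\big)\ud L(r),
\end{align*}
which equals $\langle Z(t),v\rangle-L(t)(B^*v)$ by \eqref{eq.change-int-product} and $\int_0^tB^*v\ud L(r)=L(t)(B^*v)$. Adding the deterministic and stochastic contributions and using $Y(t)=T(t)y_0+Z(t)$ gives precisely \eqref{eq.def-weak-eq}.

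Finally, for weak Bochner regularity I apply Theorem~\ref{SFT} once more with $S=[0,T]$ and $h(t,r):=\1_{[0,t]}(r)B^*T^*(t-r)f(t)$ for $f\in C([0,T];V)$; the same estimates verify its hypotheses, conclusion~(2) gives $t\mapsto\langle Z(t),f(t)\rangle\in L^2([0,T];\R)$ almost surely, and together with the continuity of $t\mapsto\langle T(t)y_0,f(t)\rangle$ this shows $t\mapsto\langle Y(t),f(t)\rangle\in L^1([0,T])$. The Fubini identity reads $\int_0^T\langle Z(t),f(t)\rangle\ud t=\int_0^T\varphi_f(r)\ud L(r)$ with $\varphi_f(r):=\int_r^TB^*T^*(t-r)f(t)\ud t$ and $\|\varphi_f(r)\|\le TM\|B^*\|_{\mathrm{op}}\|f\|_\infty$. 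Applied to a sequence $f_n\to0$ uniformly, the deterministic term $\int_0^T\langle T(s)y_0,f_n(s)\rangle\ud s$ vanishes, and since $\sup_r\|\varphi_{f_n}(r)\|\to0$ we get $E\big[\exp\big(i\alpha\int_0^T\varphi_{f_n}(r)\ud L(r)\big)\big]=\exp\big(\int_0^T\Psi(\alpha\varphi_{f_n}(r))\ud r\big)\to1$ by continuity of $\Psi$ at the origin and dominated convergence, hence $\int_0^T\varphi_{f_n}(r)\ud L(r)\to0$ in probability. I expect the main obstacle to be verifying assumption~(c) of Theorem~\ref{SFT} for the kernels $g$ and $h$ — the continuity of $r\mapsto g(\cdot,r)$ and $r\mapsto h(\cdot,r)$ as maps into the Bochner space $L^2_\eta(S;U)$ — and the bookkeeping identifying $\int_0^Tg(s,r)\ud L(r)$ with $\langle Z(s),A^*v\rangle$ as elements of $L^2_\eta(S;\R)$; once Fubini is in force, the remaining manipulations are routine semigroup calculus.
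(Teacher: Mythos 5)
Your proposal is correct and follows essentially the same route as the paper: the same stochastic Fubini theorem applied to the kernels $\1_{[0,s]}(r)B^\ast T^\ast(s-r)g(s)$, the same semigroup calculus for \eqref{eq.def-weak-eq}, the same characteristic-function/L\'evy-symbol argument for the convergence part of weak Bochner regularity, and progressive measurability via stochastic continuity. The only cosmetic differences are that you verify the integrability of $r\mapsto T(t-r)B$ by the reflection substitution rather than citing Lemma~6.2 of \cite{OU}, and you sketch the stochastic continuity inline rather than deferring it to a separate theorem.
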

\begin{example}\label{ex.series-existence}
In this and the next example we set $V=U$, $B = \text{Id}$ and 
assume that there exist $\lambda_k\ge 0$ with $\lambda_k  \rightarrow \infty$ as $k\rightarrow \infty$ such that 
\begin{align}\label{eq.semigroup-spectral}
T^*(t)e_k = e^{-\lambda_kt}e_k \qquad \text{for all } t\in[0,T], \, k\in \mathbb{N}.
\end{align}
In the literature, e.g.\ \cite{Brzetal}, \cite{LiuZhai}, \cite{LiuZhai16}
and \cite{PeszatZab12}, often  cylindrical L\'evy processes of the following form  are considered: 
\begin{align}\label{example.series}
L(t)u:=\sum \limits_{k=1}^{\infty}\langle e_k, u\rangle \sigma_k \ell_k(t)\qquad \text{for all } t\in[0,T], \, u\in U,
\end{align}
	where $(\ell_k)_{k\in \N}$ is a sequence of independent, symmetric, real valued L\'evy  processes with characteristics $(0,0,\mu_k)$ and 
$(\sigma_k)_{k\in\N}$ is a real valued sequence such 
that the series in \eqref{example.series} converges in 
$L_P^0(\Omega;\R)$.
By using \eqref{sicond3} we obtain that $T(\cdot)$ is stochastically integrable with respect to  $L$ if and only if
	\begin{align}
	\sum_{k=1}^{\infty}\int_0^T\int_\mathbb{R}\left( e^{-2\lambda_k s}|\sigma_k\beta|^2 \wedge 1\right) \mu_k (\udd \beta)\ud t < \infty;
	\end{align}
see Corollary 6.3 in \cite{OU}.
For example, if $(\ell_k)_{k\in \N}$ is a family of independent, identically distributed, standardised, symmetric $\alpha$-stable processes with $\alpha \in (0,2)$, one easily computes that $T(\cdot)$ is stochastically integrable w.r.t.\ $L$ if and only if 
	\begin{align}\label{sicond.series-stable}
\sum_{k=1}^{\infty}\frac{|\sigma_k|^{\alpha}}{\lambda_k} < \infty.	
\end{align}
This result on the existence of a weak solution of the stochastic 
Cauchy problem \eqref{SCP} coincides with the result in \cite{priola_zabczyk}.
\end{example}
\begin{example}\label{ex.canonical-stable-existence}
We assume the same setting as in Example \ref{ex.series-existence} but 
model $L$ as the canonical $\alpha$-stable cylindrical L\'evy process for $\alpha \in (0,2)$, i.e.\ the characteristic function of $L(t)$ is of the form
\[\phi_{L(t)}:U \rightarrow \mathbb{C}, \qquad \phi_{L(t)}(u)=\exp \left(-t\|u\|^{\alpha}\right).\]
Obviously, each finite dimensional projection $\big( (L(t)u_1,\dots
, L(t)u_n):\,t\ge 0\big)$ for $u_1,\dots, u_n\in U$ is an $\alpha$-stable  L\'evy process
in $\R^n$. Using this fact, it is shown in Theorem 4.1 in  \cite{Riedle_alpha_stable} that a semigroup  $(T(t))_{t\geq 0}$ satisfying 
the spectral decomposition \eqref{eq.semigroup-spectral}  is stochastically integrable with respect to $L$ if and only if 
\begin{align}\label{eq.canonical-integrable}
\int_0^T\|T(s)\|^{\alpha}_{\text{HS}}< \infty.
\end{align}
In the work \cite{brzezniak_zabczyk}, the authors consider the stochastic 
Cauchy problem in Banach spaces driven by a subordinated cylindrical Brownian motion, a slightly more general noise than the canonical $\alpha$-stable cylindrical L\'evy process. As the approach in \cite{brzezniak_zabczyk} relies on embedding the underlying space $U$ in a larger space, the derived conditions are less explicit than 
\eqref{eq.canonical-integrable} and only sufficient. 
\end{example}

\begin{proof}[Proof of Theorem \ref{existence}]
We can assume $y_0 =0$ due to linearity. Lemma 6.2 in \cite{OU} guarantees that the map  $r \mapsto T(s-r)B$ is stochastically integrable on $[0,s]$ for each $s\in (0,T]$. Thus, we can define
	\[Y(s):=\int_0^s T(s-r)B \ud L(r)\qquad\text{for all } s\in [0,T].\]
We first show that $Y$ is weakly Bochner regular. Let $g$ be in $C([0,T];V)$ and define
\begin{align}\label{f_alpha}
f\colon [0,T]\times [0,T]\to U, 
\qquad f(s,r)=\1_{[0,s]}(r)B^\ast T^\ast(s-r)g(s).
\end{align}
By using \eqref{eq.change-int-product} we conclude for all $s\in [0,T]$ that
\begin{align}\label{eq.convolution-g}
\scapro{Y(s)}{g(s)}=\int_0^s B^\ast T^\ast(s-r)g(s)\ud L(r)
=\int_0^T f(s,r)\ud L(r).
\end{align}
For fixed $s\in [0,T]$ the map $r\mapsto f(s,r)$ is regulated. Moreover, by defining $m:=\sup_{s\in [0,T]}\|B^\ast T^\ast (s)\|_{\text{op}}^2$,  we obtain for $\epsilon>0$ and $r \in [0, T-\epsilon]$ that
\begin{align*}
&\|f(\cdot,r+\epsilon)-f(\cdot,r)\|_{L^2([0,T];U)}^2\\
&=\int_0^T\|\1_{[r+\epsilon,T]}(s)B^\ast T^\ast (s-r-\epsilon)g(s)- \1_{[r,T]}(s)B^\ast T^\ast (s-r)g(s)\|^2\ud s\\
&=\int_{r+\epsilon}^T \| B^\ast T^\ast(s-r-\epsilon)(\text{Id}- T^\ast(\epsilon))g(s)\|^2 \ud s
+\int_r^{r+\epsilon} \| B^\ast T^\ast (s-r)g(s)\|^2\ud s.\\
&\leq m\int_{0}^T \|(\text{Id}- T^\ast(\epsilon))g(s)\|^2 \ud s
+\epsilon m \|g\|_{\infty}^2\\
&\to 0 \qquad\text{as $\epsilon\to 0$,}
\end{align*}
which shows that the mapping $r\mapsto f(\cdot,r)$ is right continuous. In a similar way, we establish that $r \mapsto f(\cdot,r)$ is left continuous. Thus, we can apply 
Theorem \ref{SFT} to conclude by using \eqref{eq.convolution-g} that 
the mapping $s\mapsto \scapro{Y(s)}{g(s)}$ is square-integrable on $[0,T]$. 

Let $(g_n)_{n\in\mathbb{N}}$ be a sequence in $C([0,T];V)$ with $g_n \to 0$. By Lemma 5.4 in \cite{OU} and Theorem \ref{SFT}, the L{\`e}vy symbol of the infinitely divisible random variable $\int_0^T \langle Y(s), g_n(s) \rangle \ud s$   is given by
\[ \Phi_n\colon \mathbb{R}\rightarrow \mathbb{C}, \qquad \Phi_n(\beta) = \int_0^T  \Psi \left(\int_r^T\beta B^*T^*(s-r)g_n(s)\ud s\right)\ud r,\]
where $\Psi\colon U \to\mathbb{C}$ is the L\'evy symbol of $L$. 
As $\Psi$ is continuous and maps bounded sets to bounded sets according to Lemma~3.2 and Lemma~5.1 of \cite{OU}, a repeated application of Lebesgue's theorem 
implies $\Phi_n(\beta) \rightarrow 0$ for every $\beta \in \mathbb{R}$, which proves that $Y$ is weakly Bochner regular.

Taking $T=t$ and $g(s)=A^\ast v$ for every $s\in [0,t]$ in the definition of $f$ in \eqref{f_alpha}, we can apply Theorem \ref{SFT} to obtain for each $v\in \mathcal{D}(A^*)$  that
	\begin{align*}
	\int_0^t \langle Y(s),A^*v\rangle ds & = \int_0^t\left(\int_0^sB^*T^*(s-r)A^*v\ud L(r)\right)\ud s\\
	& = \int_0^t\left(\int_r^tB^*T^*(s-r)A^*v\ud s\right)\ud L(r)\\
	&= \int_0^t\left(B^*T^*(t-r)v-B^*T^*(0)v\right)\ud L(r)\\
	&=\langle Y(t),v\rangle-L(t)(B^*v),
	\end{align*}
which shows \eqref{eq.def-weak-eq}.  Theorem \ref{th.stoch_cont} guarantees\footnote{although we present Theorem \ref{th.stoch_cont} later its proof is independent of any of the previous arguments.} that the stochastic process $\big(\int_0^t T(t-r)B\ud L(r):\, t\in[0,T]\big)$ is stochastically continuous and since it is also adapted, it has a progressively measurable modification by    Proposition 3.6 in \cite{daprato_zab} which completes the proof.
\end{proof}

To prove uniqueness of the solution we follow the same approach as in \cite{daprato_zab}, for which we need the following integration by parts formula.
\begin{lemma}\label{tau} If $g\colon [0,T]\rightarrow U$ is a function of the form $g(t) = \tau(t)u$ for $u \in U$ and $\tau\in C^1\left([0,T];\mathbb{R}\right)$,
 then
	\[\int_0^Tg(s)\ud L(s) = -\int_{0}^{T}L(s)(g'(s))\ud s + L(T)(g(T)). \]
\end{lemma}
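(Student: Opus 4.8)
The plan is to derive the formula directly from the stochastic Fubini theorem (Theorem~\ref{SFT}), rather than from a Riemann-sum and summation-by-parts computation. Write $\tau'$ for the derivative of $\tau$, so that $g'(s)=\tau'(s)u$ and $g(T)=\tau(T)u$. Since $\tau\in C^1\big([0,T];\R\big)$, the function $g$ is continuous, hence regulated, and when regarded as a map $[0,T]\to\mathcal{L}_2(U,\R)$ (every functional $\scapro{u}{\cdot}$ is Hilbert--Schmidt with $\|\scapro{u}{\cdot}\|_{\text{HS}}=\|u\|$) it is stochastically integrable by Lemma~\ref{phistochint}; the same applies to the constant map $t\mapsto u$. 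Moreover, since the stochastic integral of a simple integrand is given by the corresponding increments of $L$ and $L(0)=0$, we have $\int_0^T\1_{[0,s]}(t)u\ud L(t)=L(s)(u)$ for every $s\in[0,T]$.

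I would then apply Theorem~\ref{SFT} with $S=[0,T]$ equipped with the (finite) Lebesgue measure $\eta$ and with the integrand
\[
\tilde g\colon [0,T]\times[0,T]\to U,\qquad \tilde g(s,t):=\tau'(s)\1_{[0,s]}(t)\,u.
\]
Its hypotheses are checked routinely: $\tilde g$ is $\Borel([0,T])\otimes\Borel([0,T])$ measurable; for each fixed $s$, the map $t\mapsto\tilde g(s,t)$ is a step function with a single jump at $t=s$, hence regulated; and $t\mapsto\tilde g(\cdot,t)$ is continuous from $[0,T]$ into $L^2_\eta([0,T];U)$, since $\|\tilde g(\cdot,t_1)-\tilde g(\cdot,t_2)\|_{L^2_\eta([0,T];U)}^2=\|u\|^2\int_{t_1\wedge t_2}^{t_1\vee t_2}\abs{\tau'(s)}^2\ud s$ tends to $0$ as $t_1\to t_2$ by continuity of $\tau'$; in particular $t\mapsto\tilde g(\cdot,t)$ is regulated, so assumption~(c) holds.

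Finally I would evaluate both sides of the Fubini identity for $\tilde g$. On the left, $\int_0^T\tilde g(s,t)\ud L(t)=\tau'(s)L(s)(u)$ by the reduction above, so the left-hand side is $\int_0^T\tau'(s)L(s)(u)\ud s$, which Theorem~\ref{SFT} also guarantees is well defined. On the right, $\int_S\tilde g(s,t)\,\eta(\udd s)=u\int_t^T\tau'(s)\ud s=\big(\tau(T)-\tau(t)\big)u$, so by linearity of the stochastic integral the right-hand side equals $\tau(T)\int_0^T u\ud L(t)-\int_0^T\tau(t)u\ud L(t)=\tau(T)L(T)(u)-\int_0^T g(t)\ud L(t)$. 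Equating the two expressions and rearranging yields
\[
\int_0^T g(t)\ud L(t)=\tau(T)L(T)(u)-\int_0^T\tau'(s)L(s)(u)\ud s,
\]
which is the claimed identity once one recalls $g(T)=\tau(T)u$ and $g'(s)=\tau'(s)u$.

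I do not expect a genuine obstacle here: the only points needing a little care are the verification of assumption~(c) of Theorem~\ref{SFT} for $\tilde g$ (where the $C^1$-regularity of $\tau$ enters, through continuity and square-integrability of $\tau'$) and the identification $\int_0^T\1_{[0,s]}(t)u\ud L(t)=L(s)(u)$, which is immediate from the definition of the integral on simple integrands together with $L(0)=0$; the linearity used to split $(\tau(T)-\tau(t))u$ holds because each summand is a regulated $\mathcal{L}_2(U,\R)$-valued integrand, hence stochastically integrable. A more hands-on alternative would approximate $\int_0^T g\ud L$ by Riemann sums via Lemma~\ref{discretize} and apply summation by parts, but the Fubini route avoids the midpoint/endpoint bookkeeping and the auxiliary finite-rank truncation that appear there.
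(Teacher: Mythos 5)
Your argument is correct, and it is genuinely different from the one in the paper. The paper proves Lemma~\ref{tau} by hand: it approximates $g$ uniformly by step functions $g_n$ (so that $\int_0^T g_n\ud L\to\int_0^T g\ud L$ in probability by Lemma~5.1 of \cite{OU}), computes $\int_0^T g_n\ud L$ explicitly, performs an Abel summation, and then uses the mean value theorem together with the Riemann integrability of the c\`adl\`ag path $s\mapsto L(s)u$ to identify the limit of the resulting Riemann sums with $\int_0^T L(s)(g'(s))\ud s$. You instead apply Theorem~\ref{SFT} to the kernel $\tilde g(s,t)=\tau'(s)\1_{[0,s]}(t)u$; your verification of hypotheses (a)--(c), the identification $\int_0^T\1_{[0,s]}(t)u\ud L(t)=L(s)(u)$, and the evaluation of both iterated integrals are all correct, and there is no circularity since Theorem~\ref{SFT} is established independently of Lemma~\ref{tau}. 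What each approach buys: yours is shorter and avoids the partition/midpoint bookkeeping, at the cost of invoking the full strength of the stochastic Fubini theorem (tightness of the approximating laws, the isometry of Lemma~\ref{isometric}, etc.) for what is a purely one-dimensional statement; the paper's proof is elementary and self-contained, and it also makes explicit that $\int_0^T L(s)(g'(s))\ud s$ is the pathwise Riemann integral of the c\`adl\`ag modification of $(L(s)u)_{s\in[0,T]}$. In your route the outer integral is a Lebesgue integral of the jointly measurable version supplied by Theorem~\ref{SFT}; it is worth one sentence to note that this version agrees $\mathrm{leb}\otimes P$-almost everywhere with the c\`adl\`ag modification (any two versions of a process do), so the two readings of $\int_0^T L(s)(g'(s))\ud s$ coincide almost surely and the identity you obtain is exactly the one claimed.
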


\begin{proof}
For a sequence  $\{(t_k^n)_{k=0}^{N_n}:\, n\in\N\}$  of partitions of the interval $[0,T]$
with $ \max_{0\leq k \leq N_n-1}|t^n_{k+1}-t^n_{k}|\rightarrow 0$ as $n\rightarrow \infty$
define the simple functions
\begin{align*}
g_n\colon [0,T]\to U,\qquad
g_n(t):=\sum\limits_{k=0}^{N_n-1}g(t^n_k)\1_{[t^n_k,t^n_{k+1})}(t)
+\1_{\{T\}}(t)g(T).
\end{align*}
As $g_n$ converges to $g$ uniformly on $[0,T]$, Lemma~5.1 of \cite{OU} implies
\begin{align}\label{eq.g-limit-1}
\int_0^T g_n(s) \ud L(s)\to \int_0^T g(s)\ud L(s)
\qquad\text{in probability.}
\end{align}
On the other hand, $P$-almost surely we obtain
\begin{align}\label{eq.g-limit-2}
\int_0^T g_n(s)\ud L(s)
&=\sum\limits_{k=0}^{N_n-1}\left(L(t^n_{k+1})-L(t^n_k)\right)(\tau(t^n_k)u)\notag\\
& = \sum\limits_{k=0}^{N_n-1}\tau(t^n_k) \left(L(t^n_{k+1})-L(t^n_k)\right)(u)\notag\\
& = -\sum\limits_{k=0}^{N_n-1} \big(\tau(t^n_{k+1})-\tau(t^n_k)\big) L(t^n_{k+1})(u) + \tau(T) L(T)(u).
\end{align}
Applying the mean value theorem, we obtain for some $\xi_k^n \in (t^n_k, t^n_{k+1})$ that
	\begin{align}\label{eq.mean-value}
&\sum\limits_{k=0}^{N_n-1}\left(\tau(t^n_{k+1})-\tau(t^n_k)\right) L(t^n_{k+1})(u)\notag\\
&=	\sum\limits_{k=0}^{N_n-1} \tau'(\xi_k^n )(t^n_{k+1}-t^n_k) L(t^n_{k+1})(u)\notag\\
	&=\sum\limits_{k=0}^{N_n-1}\tau'(\xi_k^n )(t^n_{k+1}-t^n_k) L(\xi_{k}^n)(u)
 -\sum\limits_{k=0}^{N_n-1} \tau'(\xi_k^n)(t^n_{k+1}-t^n_k) \big(L(\xi_k^n)(u)-L(t^n_{k+1})(u)\big).
	\end{align}
As the map $s\mapsto \tau'(s) L(s)u $ has only countable number of discontinuities,  it is Riemann integrable and we obtain
\begin{equation}\label{eq20}
	\lim_{n\rightarrow\infty}\sum\limits_{k=0}^{N_n-1}\tau'(\xi_k^n )(t^n_{k+1}-t^n_k) L(\xi_{k}^n)(u)=\int_{0}^{T}L(s)(u\tau'(s))\ud s.
	\end{equation}
To show that the second term in  \eqref{eq.mean-value} approaches $0$ we  define
\begin{align*}
M^n_k :=\sup_{s\in[t^n_k,t^n_{k+1}]} L(s)u, \qquad\qquad  m^n_k:=\inf_{s\in[t^n_k,t^n_{k+1}]}L(s)u. 
\end{align*}
	Riemann integrability of the map $s\mapsto L(s)u$ implies
	\begin{align*}
&\left|\sum\limits_{k=0}^{N_n-1} \tau'(\xi_k^n)(t^n_{k+1}-t^n_k) \big(L(\xi_k^n)(u)-L(t^n_{k+1})(u)\big)\right|\\
&\qquad  \le \sum\limits_{k=0}^{N_n-1} \abs{\tau'(\xi_k^n)}\abs{t^n_{k+1}-t^n_k} \big|L(\xi_k^n)(u)-L(t^n_{k+1})(u)\big|\\
	& \qquad \leq \|\tau'\|_{\infty}\sum\limits_{k=0}^{N_n-1}\abs{t^n_{k+1}-t^n_k} \abs{M^n_k-m^n_k}\\
	& \qquad \to 0 \qquad \text{as}\quad n \to \infty. \numberthis \label{eq30}
	\end{align*}
Taking the limit in  \eqref{eq.g-limit-2} by applying \eqref{eq.mean-value}, \eqref{eq20} and  \eqref{eq30}  and comparing it to the limit in \eqref{eq.g-limit-1} completes the proof. 
\end{proof}

\begin{thm}\label{th.solution-implies-int}
	 If there exists a weak solution $Y$ of the stochastic Cauchy problem \eqref{SCP} then the mapping $s\mapsto T(s)B$ is stochastically integrable on $[0,T]$ with respect to $L$ and $Y$ is given by
\begin{align*}
Y(t)=T(t)y_0+ \int_0^t T(t-s)B \ud L(s).
\end{align*}
\end{thm}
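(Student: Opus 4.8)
The plan is to establish that for every $t\in(0,T]$ and every $\zeta$ in a dense subspace of $V$,
\[
\scapro{Y(t)}{\zeta}=\scapro{T(t)y_0}{\zeta}+\int_0^t B^\ast T^\ast(t-r)\zeta\ud L(r)
\qquad P\text{-a.s.},
\]
where the last term is the real-valued stochastic integral of the regulated $U$-valued function $r\mapsto B^\ast T^\ast(t-r)\zeta=(T(t-r)B)^\ast\zeta$, and then to deduce the stochastic integrability of $s\mapsto T(s)B$ together with the representation from this identity. Equivalently, the displayed identity says that the cylindrical random variable $v\mapsto\int_0^t(T(t-r)B)^\ast v\ud L(r)$ is induced by the $V$-valued random variable $Y(t)-T(t)y_0$.

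To reach the displayed identity I would combine the weak equation \eqref{eq.def-weak-eq} with the integration by parts formula of Lemma~\ref{tau}. Since Lemma~\ref{tau} only covers scalar-type integrands $\tau(\cdot)u$, I would first take $\zeta\in\bigcap_n\mathcal{D}((A^\ast)^n)$ --- a dense subspace of $V$, since $T^\ast$ is a $C_0$-semigroup on the Hilbert space $V$ with generator $A^\ast$ --- and approximate the smooth curve $r\mapsto u_r:=T^\ast(t-r)\zeta$, uniformly on $[0,t]$ and together with the curves $r\mapsto u_r'=-A^\ast u_r$, $r\mapsto A^\ast u_r$ and $r\mapsto A^\ast u_r'$, by finite sums $u^{(m)}_r=\sum_j\tau^{(m)}_j(r)w^{(m)}_j$ with $\tau^{(m)}_j\in C^1([0,t];\R)$ and $w^{(m)}_j\in\mathcal{D}(A^\ast)$. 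Applying Lemma~\ref{tau} to each $\tau^{(m)}_j(\cdot)B^\ast w^{(m)}_j$, summing, and then substituting the weak equation \eqref{eq.def-weak-eq} for every term $L(r)(B^\ast w^{(m)}_j)$ expresses $\int_0^t B^\ast u^{(m)}_r\ud L(r)$ entirely in terms of $Y$; a rearrangement using the deterministic Fubini theorem and the identities $\int_0^\sigma T^\ast(\rho)A^\ast w\ud\rho=T^\ast(\sigma)w-w$ makes all $Y$-dependent terms cancel except $\scapro{Y(t)}{u^{(m)}_t}$ and collapses the $y_0$-terms to $\scapro{T(t)y_0}{u^{(m)}_t}$. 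Letting $m\to\infty$, the left-hand side converges in probability to $\int_0^t B^\ast T^\ast(t-r)\zeta\ud L(r)$ by Lemma~5.1 of \cite{OU}; on the right-hand side the terms of the form $\int_0^t\scapro{Y(s)}{g_m(s)}\ud s$ with $g_m$ continuous and $g_m\to g$ uniformly converge by the weak Bochner regularity of $Y$, and the boundary terms converge because $u^{(m)}_t\to\zeta$ and $A^\ast u^{(m)}_t\to A^\ast\zeta$ in $V$ while $Y(t)$ takes values in $V$. This yields the displayed identity for all such $\zeta$.

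With the identity in hand on a dense subspace, I would extend it to all $v\in V$: for fixed $\omega$ the map $v\mapsto\scapro{Y(t)-T(t)y_0}{v}$ is continuous $V\to\R$, while $v\mapsto\int_0^t(T(t-r)B)^\ast v\ud L(r)$ is continuous $V\to L^0_P(\Omega;\R)$ --- if $v_n\to v$ in $V$ then $(T(t-r)B)^\ast v_n\to(T(t-r)B)^\ast v$ uniformly on $[0,t]$, using $\sup_{\sigma\in[0,t]}\|(T(\sigma)B)^\ast\|_{\mathrm{op}}<\infty$, so the integrals converge in probability by Lemma~5.1 of \cite{OU} --- and two such maps agreeing on a dense set agree everywhere. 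Hence $\scapro{Y(t)}{v}=\scapro{T(t)y_0}{v}+\int_0^t(T(t-r)B)^\ast v\ud L(r)$ for all $v\in V$ and all $t\in[0,T]$; in particular the parametrised stochastic integral of $r\mapsto T(t-r)B$ over $[0,t]$ is induced by $Y(t)-T(t)y_0$, and the same computation on a subinterval $[s_1,s_2]$ shows its integral there is induced by $T(t-s_2)Y(s_2)-T(t-s_1)Y(s_1)$. From this one reads off that $r\mapsto T(t-r)B$ fulfils the conditions \eqref{sicond12}, \eqref{sicond2}, \eqref{sicond3} of the stochastic integrability criterion in \cite{OU} (conditions \eqref{sicond2} and \eqref{sicond3} are necessary for the integral over $[0,t]$ to define a $V$-valued random variable, and \eqref{sicond12}, which involves only the bounded map $a$, holds on intervals and hence, by a routine approximation, on all Borel sets). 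Taking $t=T$ and invoking Lemma~6.2 of \cite{OU} as in the proof of Theorem~\ref{existence} then gives that $s\mapsto T(s)B$ is stochastically integrable on $[0,T]$, and $\int_0^tT(t-r)B\ud L(r)=Y(t)-T(t)y_0$, which is the assertion.

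I expect the main obstacle to be carrying out the cancellation in the second paragraph rigorously: because $L$ admits no L\'evy--It\^o decomposition in $U$ and the weak solution $Y$ need not possess any moments or path regularity, there is no It\^o formula available for the moving test function $r\mapsto T^\ast(t-r)\zeta$, and the finite-rank approximation $u^{(m)}$ together with Lemma~\ref{tau} are precisely the devices that keep every intermediate quantity a genuine random variable and every limit controlled solely by uniform convergence of regulated integrands and by the weak Bochner regularity of $Y$. A secondary delicate point is the final passage from ``the integral over each subinterval is a $V$-valued random variable'' to full stochastic integrability on $[0,T]$, which I would settle via the characterisation of stochastic integrability in \cite{OU}.
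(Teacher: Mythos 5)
Your proposal follows essentially the same route as the paper: apply the integration-by-parts formula of Lemma~\ref{tau} to scalar-type test functions $f(\cdot)B^\ast v$ with $v\in\mathcal{D}(A^\ast)$, substitute the weak equation to obtain an identity for $\scapro{Y(t)}{F(t)}$, approximate the moving test function $T^\ast(t-\cdot)v$ by such finite sums (the paper takes $v\in\mathcal{D}(A^{\ast 2})$ and cites Lemma~8.4 of \cite{neerven} for the approximation in $C^1([0,t];\mathcal{D}(A^\ast))$), pass to the limit using weak Bochner regularity and Lemma~5.1/5.2 of \cite{OU}, and finally extend to all of $V$ by density. The remaining differences are cosmetic: your dense subspace $\bigcap_n\mathcal{D}((A^\ast)^n)$ versus the paper's $\mathcal{D}(A^{\ast 2})$, and your more explicit discussion of how the resulting identity yields the integrability conditions \eqref{sicond12}--\eqref{sicond3}, a step the paper leaves implicit.
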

\begin{proof}
	We can assume that $y_0 = 0$ due to linearity. For every $v\in \mathcal{D}(A^*) $ and $t\in[0,T]$ we have $P$-a.s. that
	\begin{equation}\label{eq4}
	\langle Y(t), v\rangle = \int_0^t \langle Y(s),A^*v\rangle \ud s+L(t)(B^*v).
	\end{equation}
	Let $f$ be in $C^1([0,T];\mathbb{R})$ and $v$ in $\mathcal{D}(A^*)$. By using \eqref{eq4} 
and applying the integration by parts formula in  Lemma \ref{tau} to $g(\cdot) = f(\cdot)B^*v$ and the classical integration by parts formula for Lebesgue integrals 
we obtain 
\begin{align*}
	\int_0^t f'(s)\langle Y(s), v\rangle \ud s
	& =\int_0^t f'(s)\left(\int_0^s\langle Y(r), A^*v\rangle \ud r\right)\ud s+\int_0^tf'(s)L(s)(B^*v)\ud s\\
	& = f(t)\int_0^t\langle Y(s), A^*v\rangle \ud s -  \int_0^t f(s) \langle Y(s), A^*v\rangle \ud s\\
	& \qquad + f(t)L(t)(B^*v)-\int_0^tf(s)B^*v \ud L(s). 
	\end{align*}
Rearranging the terms and using \eqref{eq4}, we obtain by defining $F(\cdot)=f(\cdot) v$ that
	\begin{equation}\label{eq5}
	\langle Y(t), F(t)\rangle = \int_0^t \langle Y(s),F'(s)+A^*F(s)\rangle \ud s+\int_0^t B^*F(s)\ud L(s).
	\end{equation}
	For $v\in \mathcal{D}(A^{*2})$, the function $G:=T^*(t-\cdot)v$ is in $C^1([0,t];\mathcal{D}(A^*))$.
Due to Lemma 8.4 in \cite{neerven}, we can find a sequence $F_n\in \mathrm{span}\{f(\cdot) w:f\in C^1([0,t];\mathbb{R}),\,w\in \mathcal{D}(A^*)\}$ such that $F_n$ converges to $G$ in $C^1([0,t];\mathcal{D}(A^*))$. Then $F'_n+A^*F_n \to 0$ in $C([0,t];V)$.  The weakly Bochner regularity implies for a subsequence that
	\[ \int_0^t \langle Y(s),F'_{n_k}(s)+A^*F_{n_k}(s)\rangle \ud s \to  0 \quad
	\text{$P$-a.s.}\]
	Moreover, since $B^*F_n$ converges to $B^*G$ in $C([0,t];U)$,  Lemma~5.2 in \cite{OU} implies
	\[\int_0^t B^*F_n(s)\ud L(s)\rightarrow \int_0^t B^*G(s)\ud L(s)\qquad \mathrm{in\;probability}.\]
	Consequently, \eqref{eq5} holds for $F$ replaced by $G$, which gives
	\[\langle Y(t), v\rangle = \int_0^t B^*T^*(t-s)v\ud L(s) \qquad\text{for all } v \in \mathcal{D}(A^{*2}).\]
	 Since $\mathcal{D}(A^{*2})$ is dense in $V$, for any $v \in V$, we can find a sequence $\{v_n\}$ in $\mathcal{D}(A^{*2})$ with $v_n \rightarrow v$ as $n \rightarrow \infty$.
	Since $B^*T^*(t-\cdot)v_n$ converges to $ B^*T^*(t-\cdot)v$ in $C([0,t];U)$ it follows from \cite[Lemma 5.2]{OU} that
	\[\lim \limits_{n\rightarrow \infty}\int_0^t B^*T^*(t-s)v_n\ud L(s) = \int_0^t B^*T^*(t-s)v\ud L(s) \qquad \mathrm{in\; probability},\]
	and hence $P$-a.s.
	\[\langle Y(t), v\rangle = \int_0^t B^*T^*(t-s)v\ud L(s)  \qquad \mathrm{for\; all\;}  v\in V.\]
	 This establishes the stochastic integrability of $s \mapsto T(s)B$ on $[0,T]$.
\end{proof}

\section{Properties of the solution} \label{se.properties}

We begin this section with  discussing some path properties of the solution. 
Various specific examples of the stochastic Cauchy problem 
\eqref{SCP} were observed in the literature in which the solution $Y$ exists but does not have a modification 
$\tilde{Y}$  with scalarly c\`adl\`ag paths; see e.g.\ \cite{Brzetal}, \cite{LiuZhai}
and \cite{PeszatZab12}.
Even the weaker property that the real valued process $(\scapro{Y(t)}{v}:\, t\in [0,T])$ has a modification with  c\`adl\`ag paths
for each $v\in V$ can be verified only in a few specific examples. However, our stochastic Fubini Theorem~\ref{SFT} immediately implies that this real valued 
stochastic process $(\scapro{Y(t)}{v}:\, t\in [0,T])$ has square-integrable trajectories:
\begin{thm}
	If $(Y(t): t\in [0,T])$ is the weak solution of the stochastic Cauchy problem \eqref{SCP}, then for every $v \in V$, $P$-a.s.
\begin{align*}
	\int_0^T\langle Y(t),v\rangle^2 \ud t < \infty.
\end{align*}
\end{thm}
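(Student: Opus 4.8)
The plan is to reduce the claim to the stochastic Fubini computation already carried out inside the proof of Theorem~\ref{existence}. First I would invoke Theorem~\ref{th.solution-implies-int}: since a weak solution $Y$ is assumed to exist, the map $s\mapsto T(s)B$ is stochastically integrable on $[0,T]$ and, for each fixed $t\in[0,T]$, $P$-almost surely
\[
\langle Y(t),v\rangle=\langle T(t)y_0,v\rangle+\int_0^t B^\ast T^\ast(t-s)v\ud L(s),
\]
where I have applied \eqref{eq.change-int-product} to the integrand $s\mapsto T(t-s)B$. The first summand $t\mapsto\langle T(t)y_0,v\rangle$ is continuous because $(T(t))_{t\ge 0}$ is strongly continuous, hence bounded, hence square-integrable on $[0,T]$; so the statement reduces to showing that the stochastic convolution $t\mapsto\int_0^t B^\ast T^\ast(t-s)v\ud L(s)$ has square-integrable trajectories.

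For that I would apply Theorem~\ref{SFT} with $(S,\mathcal S,\eta)=\big([0,T],\Borel([0,T]),\mathrm{Leb}\big)$ and with the integrand
\[
g(s,r):=\1_{[0,s]}(r)\,B^\ast T^\ast(s-r)v,
\]
which is precisely the function $f$ of \eqref{f_alpha} specialised to the constant map $s\mapsto v\in C([0,T];V)$. The three hypotheses of Theorem~\ref{SFT} hold for exactly the reasons given in the proof of Theorem~\ref{existence}: $g$ is jointly measurable; $r\mapsto g(s,r)$ is regulated for every $s$ (it is continuous on $[0,s]$ and vanishes on $(s,T]$, so has at most one jump); and $r\mapsto g(\cdot,r)$ belongs to $R\big([0,T];L^2_{\mathrm{Leb}}([0,T];U)\big)$ by the right/left continuity estimate there, which for the constant map simplifies to $\|g(\cdot,r+\epsilon)-g(\cdot,r)\|_{L^2}^2\le mT\,\|(\I-T^\ast(\epsilon))v\|^2+\epsilon m\|v\|^2\to0$ with $m=\sup_{s\in[0,T]}\|B^\ast T^\ast(s)\|_{\mathrm{op}}^2$. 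Conclusion~(2) of Theorem~\ref{SFT} then tells us that the process $\big(\int_0^T g(s,r)\ud L(r):s\in[0,T]\big)$ defines a random variable in $L^2_{\mathrm{Leb}}([0,T];\R)$; that is, $P$-almost surely
\[
\int_0^T\Big(\int_0^s B^\ast T^\ast(s-r)v\ud L(r)\Big)^2\ud s<\infty.
\]

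Finally I would assemble the two pieces. Since $Y$ is progressively measurable by Definition~\ref{de.weak-solution}, the map $(t,\omega)\mapsto\langle Y(t,\omega),v\rangle$ is jointly measurable, and by the fixed-$t$ identity above together with Fubini's theorem it agrees, outside a $(\mathrm{Leb}\otimes P)$-null set, with $(t,\omega)\mapsto\langle T(t)y_0,v\rangle+\big(\int_0^t B^\ast T^\ast(t-r)v\ud L(r)\big)(\omega)$; Lemma~\ref{le.L0_isomorphism} lets me read the preceding display, which holds $P$-a.s.\ in $L^2_{\mathrm{Leb}}([0,T];\R)$, as a statement about this joint a.e.-representative. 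Using $(a+b)^2\le 2a^2+2b^2$ and the boundedness of $t\mapsto\langle T(t)y_0,v\rangle$, I conclude $\int_0^T\langle Y(t),v\rangle^2\ud t<\infty$ $P$-almost surely. The only point demanding genuine care --- and hence the main obstacle --- is this last bookkeeping step: one must match the $L^2_{\mathrm{Leb}}([0,T];\R)$-valued random variable furnished by Theorem~\ref{SFT}(2) with the trajectory $t\mapsto\langle Y(t),v\rangle-\langle T(t)y_0,v\rangle$ up to a $(\mathrm{Leb}\otimes P)$-null set, so that an almost-sure statement about the former transfers to the paths of $Y$. All the analytic substance is already contained in Theorems~\ref{SFT}, \ref{existence} and~\ref{th.solution-implies-int}.
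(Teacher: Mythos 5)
Your proposal is correct and follows essentially the same route as the paper: represent $\langle Y(t),v\rangle$ via the stochastic convolution (Theorem~\ref{th.solution-implies-int}), apply Theorem~\ref{SFT} to the function $f$ of \eqref{f_alpha} with $g\equiv v$, whose hypotheses are verified exactly as in the proof of Theorem~\ref{existence}, and read off square-integrability of the paths from conclusion~(2). Your extra care about the deterministic term $\langle T(t)y_0,v\rangle$ and the identification of the $L^2$-valued random variable with the actual trajectories is a welcome elaboration of what the paper leaves implicit, not a different argument.
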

\begin{proof}
By choosing $g(s)=v$ for all $s\in [0,T]$ in \eqref{f_alpha}, the following arguments in the proof of Theorem \ref{existence} show
that the function 
\begin{align*}
f\colon [0,T]\times [0,T]\to U, 
\qquad f(s,r)=\1_{[0,s]}(r)B^\ast T^\ast(s-r)v.
\end{align*}
satisfies the assumption of Theorem \ref{SFT}. Consequently, we conclude that
the stochastic process $(\scapro{Y(t)}{v}:\, t\in [0,T])$ defines a random variable in $L^2([0,T];\R)$ for each $v\in V$.
\end{proof}

\begin{thm}\label{th.stoch_cont}
	The weak solution $(Y(t): t\in[0,T])$ of the stochastic Cauchy problem \eqref{SCP} is stochastically continuous.
\end{thm}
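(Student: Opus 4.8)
The plan is to reduce to the stochastic convolution term. Since $t\mapsto T(t)y_0$ is continuous in $V$ by strong continuity of $(T(t))_{t\ge 0}$ and, by Theorem~\ref{th.solution-implies-int}, the weak solution has the form $Y(t)=T(t)y_0+Z(t)$ with $Z(t):=\int_0^t T(t-s)B\ud L(s)$ and $s\mapsto T(s)B$ stochastically integrable, it suffices to prove that $Z(t)\to Z(t_0)$ in probability in $V$ as $t\to t_0$, for each fixed $t_0\in[0,T]$. For $t>t_0$ I would use $T(t-s)=T(t-t_0)T(t_0-s)$ for $s\le t_0$ together with \eqref{eq.change-int-product} to pull the bounded operator $T(t-t_0)-\I$ out of the integral, obtaining
\[
Z(t)-Z(t_0)=\bigl(T(t-t_0)-\I\bigr)Z(t_0)+\int_{t_0}^t T(t-s)B\ud L(s),
\]
and symmetrically $Z(t_0)-Z(t)=\bigl(T(t_0-t)-\I\bigr)Z(t)+\int_t^{t_0}T(t_0-s)B\ud L(s)$ for $t<t_0$. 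The term $\bigl(T(t-t_0)-\I\bigr)Z(t_0)$ tends to $0$ $P$-almost surely, because $Z(t_0)$ is a fixed $V$-valued random variable and $T(h)v\to v$ as $h\downarrow 0$ for every $v\in V$; in the second decomposition the argument $Z(t)$ moves, so there I would additionally use that $\{Z(t):t\in[0,T]\}$ is tight (established below) and that $T(h)\to\I$ strongly, uniformly on compact subsets of $V$. Thus everything reduces to showing that the ``short'' integrals $\int_{t_0}^t T(t-s)B\ud L(s)$ and $\int_t^{t_0}T(t_0-s)B\ud L(s)$ go to $0$ in probability as $t\to t_0$.

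Write $R(t):=\int_{t_0}^t T(t-s)B\ud L(s)$; the other short integral is handled identically after relabelling. As in the proof of Lemma~\ref{discretize} I would invoke \cite[Lemma~2.4]{jaku1}, for which it is enough to check: (i) $\scapro{R(t)}{v}\to 0$ in probability for every $v\in V$; and (ii) the family of laws $\{P_{R(t)}:t_0<t\le T\}$ is relatively compact in ${\mathcal M}(V)$. For (i), by \eqref{eq.change-int-product} and Lemma~5.4 in \cite{OU} the real-valued infinitely divisible random variable $\scapro{R(t)}{v}=\int_{t_0}^t B^\ast T^\ast(t-s)v\ud L(s)$ has characteristic function $\alpha\mapsto\exp\bigl(\int_0^{t-t_0}\Psi(\alpha B^\ast T^\ast(r)v)\ud r\bigr)$ after the substitution $r=t-s$; since $\Psi$ is continuous and maps bounded sets to bounded sets by Lemma~3.2 in \cite{OU} and $r\mapsto B^\ast T^\ast(r)v$ is bounded on $[0,T]$, the exponent tends to $0$ as $t\to t_0$, so $\scapro{R(t)}{v}$ converges in distribution, hence in probability, to $0$.

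Part (ii) is where I expect the principal obstacle. By \eqref{eq.change-int-product} and Lemma~5.4 in \cite{OU} the characteristic function of $R(t)$ equals $v\mapsto\exp\bigl(\int_0^{t-t_0}\Psi(B^\ast T^\ast(r)v)\ud r\bigr)$, whose exponent is a cylindrical L\'evy symbol, so (cf.\ \cite{Riedle_infinitely}) the law $P_{R(t)}$ is a genuine infinitely divisible measure on $V$. Identifying its L\'evy--Khinchine triple as in \cite{OU} and substituting $r=t-s$, its Gaussian covariance is $Q_t:=\int_0^{t-t_0}T(r)BQB^\ast T^\ast(r)\ud r$ and its L\'evy measure $\nu_t$ is the image of $\mu$ accumulated over $r\in[0,t-t_0]$; since $[0,t-t_0]\subseteq[0,T]$, all quantities entering the relative-compactness criterion for infinitely divisible laws on a Hilbert space (as used in the proof of Lemma~\ref{phistochint} via Theorem~VI.5.3 in \cite{partha}) are bounded uniformly in $t\in(t_0,T]$: we have $\text{tr}\, Q_t\le\int_0^T\text{tr}\bigl[T(r)BQB^\ast T^\ast(r)\bigr]\ud r<\infty$ by \eqref{sicond2}; $\nu_t(\{\|w\|>1\})$ and $\int_{\|w\|\le1}\|w\|^2\,\nu_t(\udd w)$ are finite and bounded; and the tails $\sum_{k>N}\scapro{Q_t h_k}{h_k}$ and $\int_V\bigl(\bigl(\sum_{k>N}\scapro{w}{h_k}^2\bigr)\wedge 1\bigr)\nu_t(\udd w)$ vanish uniformly in $t$ as $N\to\infty$, the latter since it is bounded by $\int_0^T\int_U\bigl(\bigl(\sum_{k>N}\scapro{T(r)Bu}{h_k}^2\bigr)\wedge 1\bigr)\mu(\udd u)\ud r$, which tends to $0$ by \eqref{sicond3} (taking $m=N+1$ and passing $n\to\infty$ by monotone convergence). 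The same estimates with $[t_0,t]$ replaced by $[0,t]$ show that $\{Z(t):t\in[0,T]\}$ is tight, as used above. The step demanding genuine care is identifying, from the integration theory in \cite{OU}, the L\'evy measure of a stochastic integral $\int_0^T g(s)\ud L(s)$ in terms of the cylindrical L\'evy measure $\mu$ of $L$, so that the pointwise domination $\1_{[t_0,t]}\le\1_{[0,T]}$ of integrands translates into the required domination of L\'evy measures; the remaining verifications are routine. Combining (i) and (ii) yields $R(t)\to 0$ in probability, and with the almost sure and symmetric estimates of the first paragraph we conclude $Z(t)\to Z(t_0)$, hence $Y(t)\to Y(t_0)$, in probability as $t\to t_0$; that is, $Y$ is stochastically continuous.
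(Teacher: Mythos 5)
Your overall strategy is sound and lands on the same technical core as the paper's proof, but via a different decomposition. The paper applies \cite[Lemma~2.4]{jaku1} directly to $Y(t)$, proving scalar stochastic continuity by splitting $\scapro{Y(t+\epsilon)-Y(t)}{v}$ into the two scalar integrals $\int_0^t B^\ast T^\ast(t-s)(T^\ast(\epsilon)v-v)\ud L(s)$ and $\int_t^{t+\epsilon}B^\ast T^\ast(t+\epsilon-s)v\ud L(s)$, and then proving relative compactness of $\{P_t:\,t\in[0,T]\}$ once and for all. You instead decompose at the vector level, $Z(t)-Z(t_0)=(T(t-t_0)-\I)Z(t_0)+R(t)$ (the identity is legitimate; it is exactly \eqref{stoch_flow}, proved via \eqref{eq.change-int-product}), which lets you dispose of the semigroup perturbation by almost sure convergence (for the right limit) and by tightness of $\{Z(t)\}$ plus uniform strong convergence on compacts (for the left limit). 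This is a clean variant, but note that it does not save you the global tightness computation: you still need tightness of $\{Z(t):t\in[0,T]\}$ for the left limit and of $\{P_{R(t)}\}$ for the short integrals, and these rest on precisely the same uniform estimates from \eqref{sicond2}, \eqref{sicond3} and the domination $\theta_t\le\theta_T$ that the paper carries out. Your characteristic-function argument for $\scapro{R(t)}{v}\to 0$ is the paper's $\phi_{2,\epsilon}$ computation verbatim.

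The one genuine gap is in your claim of relative compactness of the laws of $R(t)$ (and of $Z(t)$). The criterion you invoke, Theorem~VI.5.3/VI.5.1 in \cite{partha}, characterises relative compactness of a family of infinitely divisible measures in terms of \emph{three} ingredients: the conditions on the covariance-type operators and Lévy-measure tails that you verify, \emph{relative compactness} (not mere boundedness of total mass) of the Lévy measures restricted to complements of neighbourhoods of the origin, and relative compactness of the \emph{shift} vectors. For the restricted Lévy measures your domination $\nu_t\le\theta_T$ does the job, but only after observing that $\theta_T$ restricted to such a complement is a finite Radon measure, so a single compact set works uniformly in $t$ — you should say this rather than "bounded uniformly". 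The shifts you do not address at all, and they are not obviously confined to a compact set of $V$ from the triplet formulas alone. The paper circumvents exactly this point: it first proves relative compactness of the \emph{centered} measures $\tilde P_t$ with characteristics $(0,S_t,\theta_t)$, and then upgrades to relative compactness of $\{P_t\}$ by showing that $\phi_{P_{t_n}}\to\phi_{P_t}$ uniformly on balls and invoking Theorem~2.3.8 in \cite{linde} applied to $\tilde P_{t_n}=P_{t_n}\ast\delta_{-c_{t_n}}$. You need an analogous step (for $R(t)$ the conclusion is even simpler, since the uniform-on-balls limit of $\phi_{R(t)}$ is $1$, so the limit law is $\delta_0$ and convergence in distribution to a constant gives convergence in probability); without it the tightness claim, and hence the application of \cite[Lemma~2.4]{jaku1}, is incomplete.
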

\begin{proof}
	We can assume that $y_0 =0$. Theorem \ref{th.solution-implies-int} implies 
\begin{align*}	
	Y(t)  =\int_0^tT(t-s)B\ud L(s)
	\qquad\text{for all }t\in [0,T].
\end{align*}
Let $P_t$ denote the probability distribution of $Y(t)$. 
By \cite[Lemma 2.4]{jaku1}, it is enough to show that
	\begin{enumerate}[(i)]
		\item   $\big(\scapro{Y(t)}{v}:\, t\in [0,T]\big)$ is stochastically continuous for each $v \in V$;
		\item $\{P_t:\, t\in [0,T]\}$ is relatively compact in ${\mathcal M}(V)$.
	\end{enumerate}
Proof of (i): for every $t\in [0,T]$,  $v\in V$ and  $\epsilon \ge 0$, we have by 
\eqref{eq.change-int-product} that
	\begin{align*}
&	\left|\scapro{Y(t+\epsilon)}{v}-\scapro{Y(t)}{v}\right|\\
&\quad = \left|\int_0^{t+\epsilon}B^*T^*(t+\epsilon-s)v\ud L(s)-\int_0^tB^*T^*(t-s)v\ud L(s)\right|\\
&\quad \leq \left|\int_0^{t} B^*T^*(t-s)(T^*(\epsilon)v-v)\ud L(s)\right|+\left|\int_t^{t+\epsilon}B^*T^*(t+\epsilon-s)v\ud L(s)\right|.\numberthis \label{rcrsc}
	\end{align*}
Define the random variables 
\begin{align*}
I_1(\epsilon):=\int_0^{t} B^*T^*(t-s)(T^*(\epsilon)v-v)\ud L(s),
\quad
I_2(\epsilon):=\int_t^{t+\epsilon}B^*T^*(t+\epsilon-s)v\ud L(s).
\end{align*}	
The random variable $I_1(\epsilon)$ has the characteristic function $\phi_{1,\epsilon}\colon\R\to\C$ given by
\begin{align*}
\phi_{1,\epsilon}(\beta)= \exp\left(\int_0^{t}\Psi\big(\beta B^*T^*(s)(T^*(\epsilon)v-v)\big)\ud s   \right) .
\end{align*}
By using standard properties of the semigroup we obtain 
\begin{align*}
	\sup_{s\in [0,T]}\|\beta B^*T^*(s)(T^*(\epsilon)v-v)\|
\to 0\qquad\text{as }\epsilon \to 0,
\end{align*}
which implies $\phi_{1,\epsilon}(\beta)\to 1$ for all $\beta\in\R$ due to Lemma 5.1 in \cite{OU}. Thus, $I_1(\epsilon)$ converges to $0$ in probability as $\epsilon \to 0$.
The characteristic function $\phi_{2,\epsilon}\colon\R\to\C$ of the 
random variable $I_2(\epsilon)$ obeys
	\begin{align*}
\phi_{2,\epsilon}(\beta)  
= \exp \left(\int_0^{\epsilon}\Psi(\beta B^*T^*(s)v)\ud s\right)
	 \rightarrow 1 \text{   as } \epsilon \rightarrow 0.
	\end{align*}
Consequently, we obtain that $I_2(\epsilon)\to 0$ in probability. The arguments above show by \eqref{rcrsc} that 
$\scapro{Y(t+\epsilon)}{v}\to \scapro{Y(t)}{v}$ in probability as $\epsilon\to 0$. Analogously, we can show that $\scapro{Y(t-\epsilon)}{v}\to \scapro{Y(t)}{v}$ in probability, which yields Property (i). 

Proof of (ii): it follows from Lemma 5.4 in \cite{OU} that the probability distribution $P_t$ of $Y(t)$ is an infinitely divisible probability measure in ${\mathcal M}(V)$ with characteristics $(c_t,S_t,\theta_t)$ given for all $v\in V$ by
\begin{align*}
\langle c_t,v\rangle &=\int_0^t a(B^*T^*(s)v)\ud s +\int_V\langle h, v\rangle \left(\1_{B_V}(h)-\1_{B_{\mathbb{R}}}(\langle h, v\rangle)\right)\theta_t(\udd h),\\
 \langle S_{t} v, v\rangle & = \int \limits_{0}^{t} \langle B^*T^*(s)v, QB^*T^*(s)v\rangle\ud s, \\
 \theta_t & = ( \text{leb}\otimes\mu )\circ \chi_t^{-1} \quad \text{on } \mathcal{Z}(V), 
\end{align*}
where
$\chi_t \colon [0,t]\times U \rightarrow V$ is defined by $\chi_t(s,u):=T(s)Bu$. 

Let $\tilde{P}_t$ denote the infinitely divisible probability measure 
with characteristics $(0,S_t,\theta_t)$. Theorem~VI.5.1 in \cite{partha} guarantees that the set $\{\tilde{P}_t:\, t\in [0,T]\}$  is relatively compact if and only if
 the set $\{\theta_t:\,t\in [0,T]\}$ restricted to the complement of any neighbourhood of the origin is relatively compact in 
 ${\mathcal M}(V)$ and the operators $T_t\colon V \rightarrow V$ defined by
\begin{align*}
\langle T_tv,v\rangle :=\langle S_tv,v\rangle+\int_{\|h\| \leq 1}\langle v,h\rangle^2\, \theta_t(\udd h) 
\end{align*}
satisfy
\begin{align}\label{eq.cond-T}
\sup \limits_{t\in [0,T]} \sum \limits_{k=1}^{\infty}\langle T_th_k,h_k\rangle <\infty
\qquad\text{and}\qquad
\lim \limits_{N \rightarrow \infty}\sup \limits_{t\in [0,T]} \sum \limits_{k=N}^{\infty}\langle T_th_k,h_k\rangle =0.
\end{align}
For a set $A$ in the cylindrical algebra $\mathcal{Z}(V)$ we have
\begin{align*}
\theta_t(A)= \int_{0}^t\int_U\1_{A}(T(s)Bu)\mu(\udd u)\ud s
\le \int_{0}^T\int_U\1_{A}(T(s)Bu)\mu(\udd u)\ud s
 = \theta_T(A).
\end{align*}
 Since  $\Borel(V)$ is the sigma algebra generated by $\mathcal{Z}(V)$ and $\mathcal{Z}(V)$ is closed under intersection,  we conclude $\theta_t \leq \theta_T$ on $\Borel(V)$ for all $t \in [0,T]$. Let $\theta_t^{c}$ denote the restriction 
 of $\theta_t$ to the complement of a neighbourhood of the origin.
  Since $\theta_T^{c}$ is a Radon measure by  \cite[Prop 1.1.3]{linde},  there exists for each $\epsilon >0$ a compact set $K \subseteq V$ such that $\theta_T^{c}(K^c) \leq \epsilon$. Consequently, we obtain $\theta_t^{c}(K^c) \leq \theta_T^{c}(K^c) \leq \epsilon$ for all  $t \in [0,T]$, which shows by Prokhorov's theorem that 
 $\{\theta_t: t \in [0,T]\}$ restricted to the complement of any neighbourhood of the origin is relatively compact in ${\mathcal M}(V)$. 
 
 The stochastic integrability of $s \mapsto T(s)B$ implies by \eqref{sicond2} 
and Lebesgue's theorem that
 \begin{align}
\lim_{N\to\infty}\sup \limits_{t\in [0,T]} \sum_{k=N}^{\infty}\langle S_th_k, h_k
\rangle\ud s =\lim_{N\to\infty} \int_0^T \sum \limits_{k=N}^{\infty}\langle T(s)BQB^*T^*(s)h_k,h_k\rangle\ud s =0.\label{Ttcondb1}
   \end{align}
 Condition \eqref{sicond3} of stochastic integrability implies
   \begin{align*}
&\sup \limits_{t\in [0,T]} \sum \limits_{k=N}^{\infty}\int_{\|h\| \leq 1}\langle h_k,h\rangle^2 \theta_t(\udd h)\\
&\qquad\qquad  \leq \sup \limits_{t\in [0,T]}\, \sup \limits_{m \geq N}\int_{V}\left(\sum \limits_{k=N}^{m}\langle h_k,h\rangle^2 \wedge 1\right)\theta_t(\udd h)\\
  &\qquad\qquad  = \sup \limits_{t\in [0,T]}\, \sup \limits_{m \geq N}\int_0^t\int_{U}\left(\sum \limits_{k=N}^{m}\langle h_k,T(s)Bu\rangle^2 \wedge 1\right)\mu(\udd u)\ud s\\
  &\qquad\qquad  =\sup \limits_{m \geq N}\int_0^T\int_{U}\left(\sum \limits_{k=N}^{m}\langle h_k,T(s)Bu\rangle^2 \wedge 1\right)\mu(\udd u)\ud s\\
 &\qquad\qquad  \to 0 \qquad\text{as $N\to\infty$.}  \numberthis \label{Ttcondb2}
  \end{align*}
The limits  \eqref{Ttcondb1} and \eqref{Ttcondb2} show that the second
condition in \eqref{eq.cond-T} is satisfied. As the first condition in 
\eqref{eq.cond-T} follows analogously, we conclude that $\{\tilde{P_t}:\, t\in [0,T]\}$ is relatively compact. 

Let $\{\tilde{P}_{t_n}\}_{n\in\N}$ be a weakly convergent subsequence. 
Without any restriction we can assume that there exists $t\in [0,T]$ 
such that $t_n\to t$. For the characteristic functions $\phi_{P_{t_n}}$ of $P_{t_n}$  we obtain
\begin{align*}
&|\phi_{P_{t_n}}(v)-\phi_{P_{t}}(v)|\\
&\qquad=\left|\exp \left(\int_0^{t_n}\Psi(B^*T^*(t_n-s)v)\ud s\right)-\exp\left(\int_0^{t}\Psi(B^*T^*(t-s)v)\ud s\right)\right|\\
&\qquad =\left|\exp\left(\int_t^{t_n}\Psi(B^*T^*(s)v)\ud s\right)-1\right|\left|\exp\left(\int_0^{t}\Psi(B^*T^*(s)v)\ud s\right)\right|.\numberthis\label{cfofmut}
\end{align*}
Since $\psi$ maps bounded sets to bounded sets, we obtain for each $\delta>0$ that
\[\sup \limits_{\|v\|<\delta}\left|\int_t^{t_n}\Psi(B^*T^*(s)v)\ud s\right|\rightarrow 0 \quad \text{as  } n\rightarrow \infty,\]
which implies by \eqref{cfofmut} that
\begin{align*}
\sup \limits_{\|v\|<\delta}|\phi_{P_{t_n}}(v)-\phi_{P_t}(v)|  \rightarrow 0 \quad \text{as  } n\rightarrow \infty. 
\end{align*}
As $\tilde{P}_{t_n}=P_{t_n}\ast \delta_{-c_{t_n}}$, Theorem~2.3.8 in \cite{linde} implies that $\{P_{t_n}\}$ converges weakly, which completes the proof of Property (ii).
\end{proof}	

As mentioned in the introduction, it has been observed for specific examples of a cylindrical L\'evy process, that the solution of \eqref{SCP} has 
highly irregular paths in an analytical sense. In our general setting, we state a condition in the result below which implies such highly irregular paths of the solution. 
This condition does not only allow a geometric interpretation of this phenomena but is also easy to  verify in many examples including the ones considered in the literature. 
\begin{thm}\label{th.nocadlagcond}
Assume that an orthonormal basis $(h_k)_{k\in\N}$ of $V$ is in $\mathrm{Dom}(A^\ast)$ and let $L$ be a cylindrical L{\'e}vy process  with cylindrical characteristics  $(a,Q,\mu)$.
If there exists a constant $c>0$ such that
\begin{align}\label{no-cadlag.condition}
 \lim_{n\to\infty} \mu\left(\left\{u\in U: \sum_{k=1}^n \scapro{u}{B^\ast h_k}^2 >c \right\}\right)=\infty,
\end{align}
then there does not exist any modification $\widetilde{Y}$ of the
weak solution $Y$ of \eqref{SCP} such that for each $v\in V$
the stochastic process $(\scapro{\widetilde{Y}(t)}{v}:\, t\in [0,T])$
has c{\`a}dl{\`a}g paths.
\end{thm}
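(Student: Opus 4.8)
The plan is to argue by contradiction: assume a modification $\widetilde Y$ exists such that $(\scapro{\widetilde Y(t)}{v}:\,t\in[0,T])$ has c\`adl\`ag paths for every $v\in V$, and derive a contradiction from \eqref{no-cadlag.condition}. The key observation is that c\`adl\`ag paths have at most finitely many jumps exceeding any fixed size on $[0,T]$, so the jump process of $t\mapsto\scapro{\widetilde Y(t)}{h_k}$ is controlled. From Theorem~\ref{th.solution-implies-int} we know $\scapro{Y(t)}{v}=\int_0^tB^\ast T^\ast(t-s)v\ud L(s)+\scapro{T(t)y_0}{v}$, and since each $h_k\in\mathrm{Dom}(A^\ast)$, the process $\scapro{Y(t)}{h_k}$ satisfies the weak equation \eqref{eq.def-weak-eq}; in particular its only source of discontinuities is the term $L(t)(B^\ast h_k)$, because $t\mapsto\scapro{Y(t)}{h_k}-L(t)(B^\ast h_k)=\scapro{y_0,h_k}+\int_0^t\scapro{Y(s)}{A^\ast h_k}\ud s$ is continuous. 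Hence a c\`adl\`ag modification of $\scapro{Y(\cdot)}{h_k}$ forces $t\mapsto L(t)(B^\ast h_k)$ to have a c\`adl\`ag modification $\ell_k$ for each $k$.

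First I would set up the finite-dimensional projections. Fix $n$ and consider the $\R^n$-valued L\'evy process $(L(t)(B^\ast h_1),\dots,L(t)(B^\ast h_n))$, which by definition of a cylindrical L\'evy process is a genuine L\'evy process in $\R^n$ with L\'evy measure $\mu\circ\pi_n^{-1}$, where $\pi_n\colon U\to\R^n$, $\pi_n(u)=(\scapro{u}{B^\ast h_1},\dots,\scapro{u}{B^\ast h_n})$. The set appearing in \eqref{no-cadlag.condition} is precisely $\pi_n^{-1}(\{x\in\R^n:\abs{x}^2>c\})$, so condition \eqref{no-cadlag.condition} says $(\mu\circ\pi_n^{-1})(\{\abs{x}>\sqrt c\})\to\infty$ as $n\to\infty$. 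Since the number of jumps of the $\R^n$-valued L\'evy process on $[0,T]$ of size larger than $\sqrt c$ is Poisson distributed with mean $T\cdot(\mu\circ\pi_n^{-1})(\{\abs x>\sqrt c\})$, this mean diverges. On the other hand, the vector process $\big(\ell_1(t),\dots,\ell_n(t)\big)$ is a c\`adl\`ag modification, and the jump at any fixed time $t$ of the projected process is $\big(\Delta\ell_1(t),\dots,\Delta\ell_n(t)\big)$, whose Euclidean norm is $\le\big(\sum_{k=1}^\infty(\Delta\ell_k(t))^2\big)^{1/2}$ — a quantity that does not depend on $n$.

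The crux is therefore to show that $\sup_{t\in[0,T]}\sum_{k=1}^\infty(\Delta\ell_k(t))^2<\infty$ almost surely, or at least that the total number of times $t\in[0,T]$ with $\sum_{k=1}^\infty(\Delta\ell_k(t))^2>c$ is finite almost surely; combined with the divergence of the Poisson mean above this yields the contradiction, since the $n$-th projection's big jumps are a subset (counted by the monotone-in-$n$ quantity $\sum_{k=1}^n(\Delta\ell_k(t))^2$) of those $t$ with $\sum_{k=1}^\infty(\Delta\ell_k(t))^2>c$. To get the finiteness of $\sum_k(\Delta\ell_k(t))^2$ I would use that $\widetilde Y(t)$ takes values in $V$, so $\scapro{\widetilde Y(t)}{\cdot}$ is a genuine element of $V$ with $\sum_k\scapro{\widetilde Y(t)}{h_k}^2=\norm{\widetilde Y(t)}^2<\infty$; since $\Delta\scapro{\widetilde Y(t)}{h_k}=\Delta\ell_k(t)$ and the jump $\widetilde Y(t)-\widetilde Y(t-)$ is itself a vector in $V$ (as the difference of two $V$-valued random variables along the c\`adl\`ag path in the weak sense — this needs a small argument that the left limits $\widetilde Y(t-)$ exist in $V$, e.g.\ via a separability/weak-strong argument), its squared norm $\sum_k(\Delta\ell_k(t))^2$ is finite.

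The main obstacle I anticipate is precisely this last step: passing from ``each coordinate $\scapro{\widetilde Y(\cdot)}{h_k}$ is c\`adl\`ag'' to control on $\sum_k(\Delta\ell_k(t))^2$, because scalar c\`adl\`ag-ness in every direction does not automatically give a c\`adl\`ag (or even weakly c\`adl\`ag with summable-square jumps) $V$-valued path. The cleanest route is probably to avoid the left-limit issue entirely: argue that for fixed $t$, $\sum_{k=1}^n(\Delta\ell_k(t))^2\le 2\norm{\widetilde Y(t)}^2+2\limsup_{s\uparrow t}\norm{\widetilde Y(s)}^2$ is not quite available, so instead count jumps via a Fatou/monotone argument: the event $\{\exists\,\text{infinitely many }t\in[0,T]:\sum_{k=1}^n\scapro{u}{B^\ast h_k}^2\text{-sized big jump}\}$ has probability one for large $n$ by Borel–Cantelli applied to the diverging Poisson means, yet any such $t$ is a jump time of $\scapro{\widetilde Y(\cdot)}{h_k}$ for some $k\le n$ with $\abs{\Delta\scapro{\widetilde Y(t)}{h_k}}^2$ contributing, and each individual coordinate has only finitely many jumps above any threshold — a careful bookkeeping shows one cannot accumulate infinitely many ``size $>\sqrt c$ in the first $n$ coordinates'' jumps using only finitely many coordinates unless the per-jump norms blow up, contradicting $\widetilde Y(t)\in V$. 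I would present this bookkeeping as the heart of the proof and keep the measure-theoretic preliminaries (the identification of $\mu\circ\pi_n^{-1}$ as the L\'evy measure of the $\R^n$-projection, and the Poisson description of its jumps) as routine citations to the L\'evy–It\^o theorem in finite dimensions.
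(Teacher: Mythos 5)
Your setup matches the paper's: argue by contradiction, observe from \eqref{eq.def-weak-eq} that $t\mapsto\scapro{\widetilde Y(t)}{h_k}-L(t)(B^\ast h_k)$ is continuous, so that the $\R^n$-valued processes $Y_n=(\scapro{\widetilde Y}{h_1},\dots,\scapro{\widetilde Y}{h_n})$ and $L_n=(L(\cdot)B^\ast h_1,\dots,L(\cdot)B^\ast h_n)$ have identical jumps, and identify the L\'evy measure of $L_n$ as $\mu\circ\pi_n^{-1}$, whose mass outside the ball of radius $\sqrt c$ diverges by \eqref{no-cadlag.condition}. The gap is in the step you yourself flag as the crux. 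First, the quantity you want to control, $\sum_{k=1}^\infty(\Delta\ell_k(t))^2=\|\widetilde Y(t)-\widetilde Y(t-)\|^2$, presupposes that the left limit $\widetilde Y(t-)$ exists as an element of $V$, which is exactly what is not available. Second, the ``bookkeeping'' you propose as a substitute does not close: a jump of $L_n$ of Euclidean size $\sqrt c$ can be spread over all $n$ coordinates with $\abs{\Delta\ell_k(t)}=\sqrt{c/n}$ for each $k\le n$, so the fact that each individual coordinate has only finitely many jumps above a \emph{fixed} threshold gives no uniform-in-$n$ information; the thresholds and the finite counts would have to depend on $n$, and nothing prevents the big jumps of $L_n$ from migrating to ever finer coordinate contributions as $n$ grows. (Also, the Borel--Cantelli remark is off: for each fixed $n$ the number of jumps of $L_n$ exceeding $\sqrt c$ is Poisson with \emph{finite} mean $T\,\mu_n(\{\abs{\beta}>\sqrt c\})$, hence a.s.\ finite; what diverges is this mean as $n\to\infty$, which only yields that the probability of at least one such jump tends to $1$.)

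The paper closes the argument differently, and this is the idea you are missing: instead of bounding the jump of $L_n$ by a quantity involving left limits of $\widetilde Y$ in $V$, bound it by the supremum of the c\`adl\`ag finite-dimensional process itself,
\begin{align*}
\sup_{t\in[0,T]}\abs{\Delta L_n(t)}^2=\sup_{t\in[0,T]}\abs{\Delta Y_n(t)}^2\le 4\sup_{t\in[0,T]}\abs{Y_n(t)}^2\le 4\sup_{t\in[0,T]}\sum_{k=1}^\infty\scapro{\widetilde Y(t)}{h_k}^2,
\end{align*}
where the right-hand side no longer depends on $n$ and uses only values of $\widetilde Y$, never left limits. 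A single big jump then suffices, so no counting is needed: $P(\sup_t\abs{\Delta L_n(t)}\le c)=\exp\big(-T\,\mu_n(\{\abs{\beta}>c\})\big)\to 0$, whence $\sup_t\sum_k\scapro{\widetilde Y(t)}{h_k}^2=\infty$ almost surely. The final contradiction is with the fact that a $V$-valued process with scalarly c\`adl\`ag paths is almost surely bounded in norm on $[0,T]$ (a pathwise uniform boundedness principle; this is Theorem 2.3 in \cite{PeszatZab12}, which the paper invokes) --- the pointwise finiteness of $\|\widetilde Y(t)\|$, which is all your sketch appeals to, is not sufficient.
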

\begin{remark}
Note, that if $\mu$ is a genuine L\'evy measure then Condition \eqref{no-cadlag.condition} cannot  be satisfied for any constant $c>0$. This is due to the fact that in this case, $\mu$ is a finite Radon measure on each complement of the origin; see \cite{linde}.

\end{remark}
\begin{example}(continues Example \ref{ex.series-existence}).
Assume that the cylindrical L\'evy process $L$ is given by \eqref{example.series} and $B=\text{Id}$ in equation \eqref{SCP}.
The independence of the real valued L\'evy processes $(\ell_k)_{k\in \N}$ implies that the cylindrical
L\'evy measure $\mu$ has support only in $\cup_
{k=1}^{\infty}\text{span}\{e_k\}$, and thus Condition \eqref{no-cadlag.condition}
 reduces to
\begin{align*}
 \sum_{k=1}^{\infty} \mu\Big(\Big\{u\in U:  \scapro{u}{ h_k}^2 >c \Big\}\Big)=\infty,
\end{align*}
for a constant $c > 0$. For this special case, the conclusion of 
Theorem \ref{th.nocadlagcond} has already been derived in \cite{PeszatZab12}.

For example, if $(\ell_k)_{k\in \N}$ is a family of independent, identically distributed
 symmetric $\alpha$-stable L\'evy processes, then Condition 
 \eqref{no-cadlag.condition} is satisfied for $B=\text{Id}$; see \cite{LiuZhai}.
\end{example}

\begin{example}(continues Example \ref{ex.canonical-stable-existence}). 
Let $L$ be the canonical $\alpha$-stable process, introduced in Example \ref{ex.canonical-stable-existence}. By using properties 
of $\alpha$-stable  distributions in $\R^n$ one calculates for each $n\in\N$ that
\begin{align*}
\mu\left(\left\{u\in U: \sum_{k=1}^n \scapro{u}{ h_k}^2 >c \right\}\right) = \frac{1}{c^{\alpha} c_{\alpha}}\frac{\Gamma\left(\frac{1}{2}\right)\Gamma\left(\frac{n+\alpha}{2}\right)}{\Gamma\left(\frac{n}{2}\right)\Gamma\left(\frac{1+\alpha}{2}\right)},
\end{align*}
where $\Gamma$ denotes the Gamma function and $c_\alpha$ is a constant only depending on $\alpha$. As the right hand side converges to $\infty$ as $n\to  \infty$, Condition \eqref{no-cadlag.condition} is satisfied for $B=\text{Id}$; see \cite[Theorem 5.1]{Riedle_alpha_stable}.
\end{example}

\begin{proof}[Proof of Theorem \ref{th.nocadlagcond}]
(The proof is based on ideas from \cite{LiuZhai}).
For every $n\in \N$ and $t\in [0,T]$  define the random vectors $L_n(t):=\big(L(t)B^\ast h_1,\dots, L(t)B^\ast h_n\big)$ and $Y_n(t):=\big(\scapro{Y(t)}{h_1},\dots, \scapro{Y(t)}{h_n}\big)$. It follows from Definition \ref{de.weak-solution} of a weak solution that for every $t\in [0,T]$ we have $P$-a.s
\begin{align*}
  Y_n(t)=Y_n(0)+ \int_0^t \big(\scapro{Y(s)}{A^\ast h_1}, \dots, \scapro{Y(s)}{A^\ast h_n}\big)\,ds + L_n(t).
\end{align*}
Consequently, the $n$-dimensional processes $(Y_n(t):\,t\in [0,T])$ and $(L_n(t):\,t\in [0,T])$  jump at the same time by the same size, which implies
\begin{align*}
  \sup_{t\in [0,T]}\abs{\Delta L_n(t)}^2
  =  \sup_{t\in [0,T]}\abs{\Delta Y_n(t)}^2
  \le 4 \sup_{t\in [0,T]}\abs{Y_n(t)}^2,
\end{align*}
where $\Delta g(t):=g(t)-g(t-)$ for c{\`a}dl{\`a}g functions $g\colon [0,T]\to \R^n$.
It follows that
\begin{align*}
 P\left(\sup_{t\in [0,T]} \sum_{k=1}^\infty \scapro{Y(t)}{h_k}^2<\infty\right)
 &=\lim_{c\to\infty}   P\left(\sup_{n\in\N}\sup_{t\in [0,T]} \sum_{k=1}^n \scapro{Y(t)}{h_k}^2\le\frac{1}{4}c^2\right)\notag\\
&=\lim_{c\to\infty} \lim_{n\to\infty}  P\left(\sup_{t\in [0,T]} \sum_{k=1}^n \scapro{Y(t)}{h_k}^2\le\frac{1}{4}c^2\right)\notag\\
&=\lim_{c\to\infty} \lim_{n\to\infty}  P\left(\sup_{t\in [0,T]} \abs{Y_n(t)}^2\le\frac{1}{4}c^2\right)\notag\\
&\le\lim_{c\to\infty} \lim_{n\to\infty}  P\left(\sup_{t\in [0,T]} \abs{\Delta L_n(t)}^2\le c^2\right)\notag\\
&=\lim_{c\to\infty} \lim_{n\to\infty} \exp\left(-T \mu_n\Big(\{\beta\in\R^n:\abs{\beta}>c\}\Big)\right),
\end{align*}
where $\mu_n$ denotes the L{\'e}vy measure of the $\R^n$-valued L{\'e}vy process $L_n$. Since
$\mu_n=\mu\circ\pi_{n}^{-1}$ for $\pi_n\colon U\to \R^n$ 
and $\pi_n u=(\scapro{u}{B^\ast h_1)},\dots, \scapro{u}{B^\ast h_n})$  due to \cite[Th. 2.4]{app}, we obtain by \eqref{no-cadlag.condition} that
\begin{align*}
 P\left(\sup_{t\in [0,T]} \sum_{k=1}^\infty \scapro{Y(t)}{h_k}^2<\infty\right)=0,
\end{align*}
which completes the proof by an application of Theorem 2.3 in \cite{PeszatZab12}.
\end{proof}

We continue to consider mean square continuity of the solution. For this purpose, we naturally require that the cylindrical L\'evy process has weak second moments, i.e.\ $E[|L(1)u|^2] < \infty$ for all $u \in U$.
In this case, the cylindrical L\'evy process with characteristics $(a,Q,\mu)$  can 
be written as 
\begin{align*}
L(t)u=t\scapro{\tilde{a}}{u} + W(t)u + M(t)u \qquad\text{for all }
t\ge 0,\, u\in U,
\end{align*}
where $\tilde{a}\in U$, $W$ is a cylindrical Brownian motion with covariance operator $Q$ and $M$ is a cylindrical L\'evy process independent of $W$ and with characteristics $(a^\prime,0,\mu)$. Here $a^\prime\colon U\to \R$ is defined by $a^\prime(u):=-\int_{\abs{\beta}>1}\beta \, (\mu\circ  u^{-1})(\udd \beta)$ and $\scapro{\tilde{a}}{u}=a(u)-a^\prime(u)$
 for all $u\in U$; see Corollary 3.12 in \cite{app}. It follows for any function $f\in R([0,T];U)$ that 
 \begin{align}\label{eq.int-decomp}
 \int_0^t f(s)\ud L(s)=
 \int_0^t \scapro{\tilde{a}}{f(s)}\ud s
  +\int_0^t f(s)\ud W(s) + \int_0^t f(s)\ud M(s).
 \end{align}

\begin{example}
Assume that $L$ has weak second moments. If 
\begin{align}\label{sibmcond}
\int_0^T\|T(s)B\|^2_{\text{HS}}\ud s < \infty,
\end{align}
then there exists a weak solution $(Y(t):\, t\in [0,T])$ of the Cauchy problem \eqref{SCP} and it satisfies $E[\|Y(t)\|^2]<\infty$ for all $t\in [0,T]$.

\begin{proof}
For showing the existence of a solution, we have to establish that $t\mapsto T(t)B$ is stochastically integrable.  Conditions \eqref{sicond12} and \eqref{sicond2} can be verified similarly as in the proof of Lemma \ref{phistochint}.
Since $L$ has weak second moments, the closed graph theorem guarantees that $L(t)\colon U \rightarrow L^2_{P}(\Omega;\mathbb{R})$ is continuous, which  implies 
\begin{align*}
C:=\sup \limits_{\|u^*\| \leq 1}\int_U \langle u, u^*\rangle^2\mu(\udd u)
\leq\|L(1)\|^2_{\text{op}} <\infty.
\end{align*}
Consequently,  Condition \eqref{sicond3} is satisfied since 
\begin{align}\label{eq.T-HS-Int-2}
&\int_0^T\int_U  \left(\sum_{k=m}^{n }\langle u, B^*T^*(s)h_k \rangle^2\wedge 1\right)\mu(\udd u)  \ud s \notag \\
&\qquad\le \sum_{k=m}^{n}\int_0^T\int_U  \langle u, B^*T^*(s)h_k \rangle^2\mu(\udd u)  \ud s \notag \\
&\qquad  \le \sum_{k=m}^{n}\int_0^T\int_U\|B^*T^*(s)h_k\|^2 \left \langle u,     
    \frac{B^\ast T^*(s)h_k}{\|B^*T^*(s)h_k\|} \right \rangle^2\mu(\udd u) \ud s\notag \\
& \qquad \le \sup _{\|u^*\| \leq 1}\int_U \langle u, u^* \rangle^2\, \mu(\udd u)\sum_{k=m}^{n}\int_0^T\|B^*T^*(s)h_k\|^2 \ud s  \notag \\
&\qquad \to 0\qquad\text{as }m,n\to\infty, 
\end{align}
where we applied \eqref{sibmcond} in the last line.
As the L\'evy measure $\theta_t$ 
of the infinitely divisible random variable $Y(t)$ is given by $( \text{leb}\otimes \mu ) \circ \chi_t^{-1}$ on $\mathcal{Z}(V)$ where $\chi_t\colon[0,t] \times U \rightarrow V$
and $\chi_t(s,u)=T(s)Bu$, we obtain by a similar calculation as in  \eqref{eq.T-HS-Int-2}  that  
\begin{align*}
\int_V \|v\|^2\, \ud \theta_t(v)=
\sum_{k=1}^{\infty}\int_0^t\int_U \langle u, B^*T^*(s)h_k \rangle^2\mu(\udd u)  \ud s
\le  C\int_0^t\|B^*T^*(s)\|^2_{\text{HS}}\ud s
 < \infty.
\end{align*}
Consequently, we have $E[\|Y(t)\|^2]<\infty$ for all $t\in [0,T]$.
\end{proof}
\end{example}

\begin{thm} Assume that $L$ has weak second moments.
If  the weak  solution $(Y(t): t\in [0,T])$ of the stochastic Cauchy problem \eqref{SCP} has finite second moments, i.e.\ $E[\|Y(t)\|^2]<\infty$ for all $t\in [0,T]$, 
then $Y$ is continuous in mean-square, i.e.\  $Y\in C([0,T];L^2_P(\Omega; V))$.
\end{thm}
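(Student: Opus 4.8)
The plan is to combine the explicit representation of the solution from Theorem~\ref{th.solution-implies-int} with the fact that, under the stated hypotheses, $Y(t)$ is a genuine, square-integrable, infinitely divisible $V$-valued random variable. By linearity it suffices to treat $y_0=0$: the term $t\mapsto T(t)y_0$ is continuous from $[0,T]$ into $V$ by strong continuity of the semigroup, hence continuous into $L^2_P(\Omega;V)$, and it does not affect square-integrability. Thus $Y(t)=\int_0^tT(t-s)B\ud L(s)$ and $s\mapsto T(s)B$ is stochastically integrable. Writing $\phi_t(r):=\1_{[0,t]}(r)T(t-r)B$, so that $Y(t)=\int_0^T\phi_t(r)\ud L(r)$, we get $Y(t)-Y(s)=\int_0^T\big(\phi_t(r)-\phi_s(r)\big)\ud L(r)$, and it remains to prove that $E\big[\|Y(t)-Y(s)\|^2\big]\to 0$ as $t-s\to 0$ for $s\le t$; this covers both one-sided limits and hence yields continuity on all of $[0,T]$.

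The first step I would carry out is to compute the second moment of the increment. Since $Y(t)$ and $Y(s)$, and hence $Y(t)-Y(s)$, are genuine square-integrable $V$-valued random variables, and $Y(t)-Y(s)$ is moreover infinitely divisible, Lemma~5.4 in \cite{OU} gives its Gaussian covariance $\int_0^T(\phi_t(r)-\phi_s(r))Q(\phi_t(r)-\phi_s(r))^\ast\ud r$ and its L\'evy measure as the image of $\text{leb}\otimes\mu$ under $(r,u)\mapsto(\phi_t(r)-\phi_s(r))u$, while the decomposition \eqref{eq.int-decomp}, together with the fact that $W$ and $M$ are centred, identifies its mean as $\int_0^T(\phi_t(r)-\phi_s(r))\tilde a\ud r$. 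The usual moment formula for infinitely divisible laws on a Hilbert space then expresses $E\big[\|Y(t)-Y(s)\|^2\big]$ as the sum of $\big\|\int_0^T(\phi_t(r)-\phi_s(r))\tilde a\ud r\big\|^2$, of $\int_0^T\|(\phi_t(r)-\phi_s(r))Q^{1/2}\|_{\text{HS}}^2\ud r$, and of $\int_0^T\int_U\|(\phi_t(r)-\phi_s(r))u\|^2\mu(\udd u)\ud r$ (alternatively one splits $Y(t)-Y(s)$ through \eqref{eq.int-decomp} into drift, Brownian and pure-jump parts and bounds $E[\|Y(t)-Y(s)\|^2]$ by three times these three quantities). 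It therefore suffices to show that each of these three quantities tends to $0$ as $t-s\to 0$.

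For $s\le t$ one has $\phi_t(r)-\phi_s(r)=\1_{[0,s]}(r)(T(t-s)-\I)T(s-r)B+\1_{(s,t]}(r)T(t-r)B$. The drift term is just $\big\|\int_s^tT(r)B\tilde a\ud r\big\|\le (t-s)\,m\,\|B\|_{\text{op}}\|\tilde a\|\to 0$, where $m:=\sup_{\rho\in[0,T]}\|T(\rho)\|_{\text{op}}$. In the remaining two quantities the $\1_{(s,t]}$-contributions become, after the substitution $\rho=t-r$, the integrals $\int_0^{t-s}\|T(\rho)BQ^{1/2}\|_{\text{HS}}^2\ud\rho$ and $\int_0^{t-s}\int_U\|T(\rho)Bu\|^2\mu(\udd u)\ud\rho$, which vanish as $t-s\to 0$ by absolute continuity of the integral, since $\rho\mapsto\|T(\rho)BQ^{1/2}\|_{\text{HS}}^2$ is in $L^1([0,T])$ by Condition~\eqref{sicond2} and $\rho\mapsto\int_U\|T(\rho)Bu\|^2\mu(\udd u)$ is in $L^1([0,T])$ because $\int_0^T\int_U\|T(\rho)Bu\|^2\mu(\udd u)\ud\rho=\int_V\|v\|^2\theta_T(\udd v)$, which is finite as the L\'evy measure $\theta_T$ of the square-integrable random variable $Y(T)$ has a finite second moment. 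The $\1_{[0,s]}$-contributions are, after the substitution $\rho=s-r$, dominated by $\int_0^T\|(T(t-s)-\I)T(\rho)BQ^{1/2}\|_{\text{HS}}^2\ud\rho$ and $\int_0^T\int_U\|(T(t-s)-\I)T(\rho)Bu\|^2\mu(\udd u)\ud\rho=\int_V\|(T(t-s)-\I)v\|^2\theta_T(\udd v)$, and both tend to $0$ by dominated convergence: the dominating functions $(1+m)^2\|T(\rho)BQ^{1/2}\|_{\text{HS}}^2$ and $(1+m)^2\|v\|^2$ are integrable by the previous step, and the integrands tend to $0$ pointwise by strong continuity of $(T(\rho))_{\rho\ge 0}$, using that $\|(T(\epsilon)-\I)\psi\|_{\text{HS}}\to 0$ for every Hilbert--Schmidt operator $\psi$. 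Collecting these estimates gives $E[\|Y(t)-Y(s)\|^2]\to 0$, i.e.\ $Y\in C([0,T];L^2_P(\Omega;V))$.

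The step I expect to be the main obstacle is the identity $\int_0^T\int_U\|T(\rho)Bu\|^2\mu(\udd u)\ud\rho=\int_V\|v\|^2\theta_T(\udd v)$ together with the finiteness of the right-hand side, which has to be extracted from the second-moment hypothesis via the standard moment characterisation of infinitely divisible laws on a Hilbert space and the description of $\theta_T$ obtained in the proof of Theorem~\ref{th.stoch_cont}; once this is in place, promoting strong continuity of the semigroup to the Hilbert--Schmidt and $L^2(\theta_T)$ convergences required for the dominated-convergence passages is routine, and everything else reduces to absolute continuity of an integral.
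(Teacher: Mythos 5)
Your argument is correct and essentially reproduces the paper's proof: both rest on the representation $Y(t)=\int_0^t T(t-s)B\ud L(s)$, the second-moment formula obtained from the decomposition \eqref{eq.int-decomp} into drift, Gaussian and jump parts, the splitting of the increment into $(T(t-s)-\I)$ acting on the old portion plus the fresh increment, and dominated convergence based on the finiteness of $\int_V\|v\|^2\,\theta_T(\udd v)$ extracted from the second-moment hypothesis. The only cosmetic difference is that you compute the exact second moment of $Y(t)-Y(s)$ and then split the resulting deterministic integrals (using the disjoint supports and the telescoping drift), whereas the paper first bounds $E[\|Y(t+\epsilon)-Y(t)\|^2]$ by twice the sum of the second moments of the two pieces and applies the moment formula to each separately.
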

\begin{proof}
Let $\Phi\colon [0,T]\to {\mathcal L}(U,V)$ be a  stochastically integrable,  regulated function and $\Phi(\cdot)\tilde{a}$ be Pettis integrable. Then we obtain for each $t\in [0,T]$ and $\Psi\in {\mathcal L}(V,V)$ by \eqref{eq.int-decomp} 
and using the fact that $W$ and $M$ have mean zero and are independent:  
\begin{align}\label{eq.mean-square-formula}
&E\left[\left\|\int_0^t \Psi\Phi(t-s) \ud L(s)\right\|^2\right]\notag\\
&\qquad\qquad =\sum_{k=1}^\infty E\left[ \left|\int_0^t  \Phi^\ast(t-s)\Psi^\ast h_k\ud L(s)\right|^2\right]\notag\\
&\qquad\qquad = \sum_{k=1}^\infty \Bigg( E\left[\left| \int_0^t \scapro{\tilde{a}}{\Phi^\ast(s)\Psi^\ast h_k}\ud s \right|^2\right]
+ \int_0^t \scapro{Q\Phi^\ast(s)\Psi^\ast h_k}{\Phi^\ast (s)\Psi^\ast h_k}\ud s \notag\\
 &\qquad\qquad\qquad \qquad + \int_0^t \int_U \scapro{u}{\Phi^\ast (s)\Psi^\ast h_k }^2\, \mu(\udd u)\ud s\Bigg)\notag\\
&\qquad\qquad= \left\| \int_0^t \Psi\Phi(s)\tilde{a}\ud s\right\|^2 
 + \int_0^t \Big\|\Psi \Phi(s) Q^{1/2}\Big\|^2_{\text{HS}}\ud s 
 + \int_V \|\Psi v\|^2 \,\eta_t(\mathrm{d}v),
\end{align}
where $\eta_t$ is the (genuine) L\'evy measure of $\int_0^t\Phi(s)\ud L(s)$ and  is given by $\eta_t=( \text{leb}\otimes \mu ) \circ \xi_t^{-1}$ where $\xi_t\colon[0,t] \times U \rightarrow V$ is defined by  $\xi_t(s,u)=\Phi(s)u$. 

 We can assume $y_0=0$. Theorem \ref{th.solution-implies-int} implies 
\begin{align*}
 Y(t) = \int_0^tT(t-s)B\ud L(s) \qquad\text{for all }t\in [0,T].
 \end{align*}
 As $Y(t)$ has finite second moments it follows
 $\int_V \|v\|^2\,\theta_t(\udd v)<\infty$, where $\theta_t$ is the (genuine) L\'evy measure of $Y(t)$ and  is given by $\theta_t=( \text{leb}\otimes \mu ) \circ \chi_t^{-1}$ where $\chi_t\colon[0,t] \times U \rightarrow V$ is defined by  $\chi_t(s,u)=T(s)Bu$.
For any $t \in [0,T]$ and $\epsilon>0$ we obtain 
\begin{align}\label{eq.L2-main-inequality}
 &  E[\|Y(t+\epsilon)-Y(t)\|^2]\notag \\
 &= E\left[\left\|\int_0^{t}(T(t+\epsilon-s)B -T(t-s)B)\ud L(s)+\int_t^{t+\epsilon}T(t+\epsilon-s)B\ud L(s)\right\|^2\right]\notag \\
  & \le 2E\left[\left\|\int_0^{t}(T(\epsilon)-\I)T(t-s)B \ud L(s)\right\|^2\right]+2E\left[\left\|\int_t^{t+\epsilon}T(t+\epsilon-s)B\ud L(s)\right\|^2\right].
 \end{align}
By applying \eqref{eq.mean-square-formula}  we conclude
\begin{align*}
&E\left[\left\|\int_0^{t}(T(\epsilon)-\I)T(t-s)B \ud L(s)\right\|^2\right]\\
&\qquad\qquad \le t \int_0^t \|(T(\epsilon)-\I)T(s)B\tilde{a}\|^2\ud s
  +\int_0^t \left\|(T(\epsilon) -\I)T(s)BQ^{1/2}\right\|^2_{\text{HS}} \ud s\\
&\qquad \qquad\qquad\qquad  +\int_V \|(T(\epsilon)-\I)v\|^2\,\theta_t(\udd v).
\end{align*} 
Applying Lebesgue's theorem to each of the terms above shows
\begin{align}\label{eq.L2-aux1}
E\left[\left\|\int_0^{t}(T(\epsilon)-\I)T(t-s)B \ud L(s)\right\|^2\right]
\to 0\qquad\text{as $\epsilon\to 0$.}
\end{align}
By a similar computation as in \eqref{eq.mean-square-formula} we 
obtain for the second term in \eqref{eq.L2-main-inequality} that 
\begin{align}\label{eq.second-term-mean-square}
&E\left[\left\|\int_t^{t+\epsilon}T(t+\epsilon-s)B\ud L(s)\right\|^2\right]\notag\\
&\qquad\qquad =\left\| \int_0^\epsilon T(s)B\tilde{a}\ud s\right\|^2 +\int_0^\epsilon \left\|T(s)BQ^{1/2}\right\|^2_{\text{HS}} \ud s\notag\\
&\qquad\qquad\qquad\qquad + \sum \limits_{k=1}^{\infty}\int_0^{T}\1_{[0,\epsilon]}(s)\int_U\langle u, B^*T^*(s)h_k\rangle^2 \, \mu (\udd u)\ud s.
\end{align}
The first two terms in \eqref{eq.second-term-mean-square} converge to 0 as $\epsilon \to 0$. Since 
\begin{align*}
\sum \limits_{k=1}^{\infty}\int_0^{T}\1_{[0,\epsilon]}(s)\int_U\langle u, B^*T^*(s)h_k\rangle^2\mu (\udd u)\ud s
\le \int_V \|v\|^2\, \theta_T(\udd v)<\infty,
\end{align*}
we can apply Lebesgue's theorem to the third term in \eqref{eq.second-term-mean-square} and
obtain
\begin{align}\label{eq.L2-aux2}
E\left[\left\|\int_t^{t+\epsilon}T(t-s)B\ud L(s)\right\|^2\right]
\to 0 \qquad\text{as $\epsilon\to 0$.}
\end{align}
Applying \eqref{eq.L2-aux1} and \eqref{eq.L2-aux2} to \eqref{eq.L2-main-inequality} 
shows that $Y$ is mean-square continuous from the right. Analogously, we can prove that $Y$ is mean-square continuous from the left which completes the proof.
\end{proof}

We now discuss the flow property and Markov property of the solution of the stochastic Cauchy problem \eqref{SCP}.
For this purpose we assume that $t\mapsto T(t)B$ is stochastically integrable and define for $0\le s\le t \le T$ the mapping
\begin{align*}
\Phi_{s,t}\colon V\times \Omega \rightarrow V, 
\qquad
 \Phi_{s,t}(v)=T(t-s)v+\int_s^tT(t-r)B\ud L(r).
\end{align*}
\begin{thm}
Let $(Y(t):\, t\in [0,T])$ be the weak solution of \eqref{SCP}. Then we have:
\begin{enumerate}
\item[{\rm (a)}] the family $\{\Phi_{s,t}:\, 0\le s\le t\le T\}$ is 
a stochastic flow, i.e.\ $\Phi_{s,s}=\rm{Id}$ and
\begin{align*}
 \Phi_{s,t}\circ \Phi_{r,s}=\Phi_{r,t}
 \qquad \text{for all }0\le r\le s\le t\le T.
\end{align*}
\item[{\rm (b)}] the weak solution $(Y(t):\, t\in [0,T])$
is a Markov process with respect to 
the filtration $({\mathcal F}_t)_{t\in [0,T]}$ defined by 
$\mathcal F_t:=\sigma(\{L(s)u:\, s\in [0,t],\, u\in U\})$. 
\end{enumerate}
\end{thm}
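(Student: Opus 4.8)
The plan is to derive both assertions from the variation-of-constants representation $Y(t)=T(t)y_0+\int_0^t T(t-r)B\ud L(r)$, which holds by Theorem~\ref{th.solution-implies-int} since $t\mapsto T(t)B$ is assumed stochastically integrable, together with two elementary properties of the stochastic integral. First, a bounded operator pulls inside: for $\Psi\in\mathcal{L}(V,V)$ and a stochastically integrable $f$ one has $\Psi\int_A f\ud L=\int_A \Psi f\ud L$, which follows by testing against an arbitrary $w\in V$ and using \eqref{eq.change-int-product} together with $(\Psi f(\cdot))^\ast w=f^\ast(\cdot)\Psi^\ast w$. Second, the integral is additive over adjacent intervals, $\int_r^s f\ud L+\int_s^t f\ud L=\int_r^t f\ud L$, which is immediate from \eqref{eq.change-int-product} and additivity of the scalar integral over disjoint sets. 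Applied to $f(r)=T(t-r)B$, both facts only use that $r\mapsto T(t-r)B$ is stochastically integrable on $[0,t]$, hence on every sub-interval, which is Lemma~6.2 in \cite{OU}.

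For part~(a), the identity $\Phi_{s,s}=\mathrm{Id}$ is immediate since $T(0)=\mathrm{Id}$ and the integral over $\{s\}$ vanishes. For the cocycle identity I would expand
\[
\Phi_{s,t}\big(\Phi_{r,s}(v)\big)=T(t-s)\Big(T(s-r)v+\int_r^s T(s-r')B\ud L(r')\Big)+\int_s^t T(t-r')B\ud L(r'),
\]
pull $T(t-s)$ under the first integral, and use the semigroup law $T(t-s)T(s-r')=T(t-r')$, valid for $r'\le s\le t$; the two integral terms then combine by additivity into $\int_r^t T(t-r')B\ud L(r')$, while $T(t-s)T(s-r)v=T(t-r)v$, so the right-hand side equals $\Phi_{r,t}(v)$.

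For part~(b), the same manipulation applied to $Y(s)=T(s)y_0+\int_0^s T(s-r)B\ud L(r)$ gives, $P$-almost surely for every $0\le s\le t\le T$, that $\Phi_{s,t}(Y(s))=Y(t)$. Writing $\Phi_{s,t}(v)=T(t-s)v+Z_{s,t}$ with $Z_{s,t}:=\int_s^t T(t-r)B\ud L(r)$, a genuine $V$-valued random variable, the crucial step is to show that $Z_{s,t}$ is independent of $\mathcal F_s$. I would approximate, using Lemma~5.1 in \cite{OU} with a uniform approximation of the regulated integrand, the cylindrical random variable $u\mapsto Z_{s,t}u=\int_s^t B^\ast T^\ast(t-r)u\ud L(r)$ by Riemann--Stieltjes sums $\sum_k\big(L(t_{k+1})-L(t_k)\big)\big(B^\ast T^\ast(t-t_k)u\big)$ over partitions of $[s,t]$; hence, for any $u_1,\dots,u_n\in U$, the vector $(Z_{s,t}u_1,\dots,Z_{s,t}u_n)$ is a limit in probability of random vectors measurable with respect to $\mathcal G_{[s,t]}:=\sigma\big(L(b)u-L(a)u:\, s\le a\le b\le t,\ u\in U\big)$. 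Since every finite-dimensional projection of $L$ is an $\R^n$-valued L\'evy process, hence has independent increments, $\mathcal G_{[s,t]}$ is independent of $\mathcal F_s$, so the $V$-valued random variable $Z_{s,t}$ is independent of $\mathcal F_s$ as well. As $Y$ is adapted to $(\mathcal F_t)$ and $Y(s)$ is $\mathcal F_s$-measurable, the factorisation (``freezing'') lemma for conditional expectations then gives, for every bounded Borel $\phi\colon V\to\R$,
\[
E\big[\phi(Y(t))\,\big|\,\mathcal F_s\big]=\big(P_{s,t}\phi\big)\big(Y(s)\big),\qquad \big(P_{s,t}\phi\big)(v):=E\big[\phi\big(T(t-s)v+Z_{s,t}\big)\big],
\]
and the right-hand side is $\sigma(Y(s))$-measurable, which is exactly the Markov property with respect to $(\mathcal F_t)_{t\in[0,T]}$; Borel measurability of $v\mapsto(P_{s,t}\phi)(v)$ is obtained by a monotone-class argument starting from bounded continuous $\phi$.

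The algebra in part~(a) and the reduction $Y(t)=\Phi_{s,t}(Y(s))$ are routine. The main obstacle is the rigorous proof that $Z_{s,t}$ is independent of $\mathcal F_s$: one must treat the cylindrical nature of both $L$ and the parametrised integral when passing from finitely many test functions to the $V$-valued random variable, and one must check the Borel measurability of the transition kernels $P_{s,t}$ before invoking the freezing lemma.
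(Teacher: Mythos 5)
Your proposal is correct and follows essentially the same route as the paper: part (a) by pulling $T(t-s)$ under the integral via \eqref{eq.change-int-product} and the semigroup law, and part (b) by showing the increment $\int_s^t T(t-r)B\ud L(r)$ is measurable with respect to the $\sigma$-algebra generated by the increments of $L$ on $[s,t]$ (hence independent of $\mathcal F_s$) and then applying the freezing lemma to $\Phi_{s,t}\circ\Phi_{0,s}$. The extra details you supply on the Riemann-sum approximation and the measurability of the kernel $P_{s,t}$ merely flesh out what the paper asserts more briefly.
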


\begin{proof} (a): we first show that for all $0\le r\le s\le t\le T$ we have 
\begin{align}\label{stoch_flow}
 T(t-s)\left(\int_r^sT(s-q)B\ud L(q)\right)=\int_r^sT(t-q)B\ud L(q).
\end{align}
For any $v \in V$, we obtain by \eqref{eq.change-int-product}
\begin{align*}
\left \langle T(t-s)\left(\int_r^sT(s-q)B\ud L(q)\right), v\right \rangle 
    &= \left \langle \int_r^sT(s-q)B\ud L(q), T^*(t-s)v\right \rangle\\
	&=\int_r^sB^*T^*(s-q)(T^*(t-s)v)\ud L(q)\\
	&=\int_r^sB^*T^*(t-q)v \ud L(q)\\
	&=\left \langle \int_r^sT(t-q)B\ud L(q), v\right \rangle,
\end{align*}
which shows \eqref{stoch_flow}. This enables us to conclude
\begin{align*}
	\Phi_{s,t}(\Phi_{r,s}(v))&=T(t-s)\Phi_{r,s}(v)+\int_s^tT(t-q)B\ud L(q)\\
	&=T(t-s)\left(T(s-r)v+\int_r^sT(s-q)B\ud L(q)\right)+\int_s^tT(t-q)B\ud L(q)\\
    &=T(t-r)v+\int_r^sT(t-q)B\ud L(q)+\int_s^tT(t-q)B\ud L(q)\\
	&=T(t-r)v+\int_r^tT(t-q)B\ud L(q)\\
	&=\Phi_{r,t}(v),
\end{align*}
which completes the proof of (a).

(b): by construction of stochastic integrals, we deduce that each $\Phi_{s,t}(v)$ is measurable with respect to $\sigma(\{L(q)u-L(p)u:\, s\le p< q\le t,\, u\in U\})$ for each $v\in V$. The independent increments of $L$ guarantee that $\Phi_{s,t}(v)$ is independent of $\mathcal{F}_s$. Consequently, by using 
Part (a) we obtain for any bounded, measurable function $f\colon V\to \R$ that
\begin{align*}
E\big[f(\Phi_{0,t+s}(y_0))|\mathcal{F}_s\big]= E\big[f(\Phi_{s,t+s}\circ \Phi_{0,s}(y_0))|\mathcal{F}_s\big]
=g_{s,t,f}(\Phi_{0,s}(y_0)), 
\end{align*}
where $g_{s,t,f}(v):=E\big[f(\Phi_{s,t+s}(v))\big]$ for $v\in V$. Since
$E\big[f(\Phi_{0,t+s}(y_0))|\Phi_{0,s}(y_0)\big]=g_{s,t,f}(\Phi_{0,s}(y_0))$ we obtain
\begin{align*}
E\big[f(\Phi_{0,t+s}(y_0))|\mathcal{F}_s\big]=E\big[f(\Phi_{0,t+s}(y_0))|\Phi_{0,s}(y_0)\big],
\end{align*}
which completes the proof of Part (b).
\end{proof}

\noindent\textbf{Acknowledgments:} the authors would like to thank 
Tomasz Kosmala for proofreading. Umesh Kumar thanks his home institution
 Rajdhani College, University of Delhi, New Delhi - 110015, INDIA for granting him leave for his PhD studies. 


\end{document}